\documentclass[12pt]{extarticle}
\usepackage[utf8]{inputenc}
\usepackage[T1]{fontenc}

\usepackage{caption}
\oddsidemargin \evensidemargin
\usepackage{setspace}
\usepackage{amssymb}
\usepackage{amsmath}
\usepackage{comment}
\usepackage{amsthm}
\usepackage{amssymb}
\usepackage{algorithmicx}
\usepackage{algorithm}
\usepackage{algpseudocode}
\usepackage{mathtools}
\usepackage{nicematrix}
\usepackage[margin=1cm]{caption}
\usepackage{subcaption}
\usepackage{enumerate}
\usepackage{enumitem}
\setlist[enumerate]{itemsep=-2pt, topsep=2pt}
\usepackage{multirow}
\usepackage{tikz,tikz-3dplot,tikz-cd}
\usetikzlibrary{decorations.markings, arrows.meta} 
\tikzset{
  midarrow/.style={
    postaction={
      decorate,
      decoration={
        markings,
        mark=at position 0.5 with {\arrow{Stealth}}
      }
    }
  }
}
\usepackage[dvipsnames]{xcolor}
\usepackage{pgfplots}
\usepackage{url}
\usepackage{csquotes}
\usepackage[backend=biber,
style=trad-alpha,doi=false]{biblatex} 
\usepackage{hyperref}
\hypersetup{
    colorlinks=true,
    linkcolor=orange!80!black,
    filecolor=magenta,      
    urlcolor=blue,
    citecolor=RoyalBlue,
}

\definecolor{mycolor1}{rgb}{0.00000,0.44700,0.74100}
\definecolor{mycolor2}{rgb}{0.8500, 0.3250, 0.0980}
\definecolor{mycolor3}{rgb}{0.9290, 0.6940, 0.1250}
\definecolor{mycolor4}{rgb}{0.4940, 0.1840, 0.5560}
\definecolor{mycolor5}{rgb}{0.4660, 0.6740, 0.1880}

\theoremstyle{definition}
\newtheorem{definition}{Definition}[section]
\theoremstyle{theorem}

\newtheorem{proposition}[definition]{Proposition}
\newtheorem{corollary}[definition]{Corollary}
\newtheorem{theorem}[definition]{Theorem}

\newtheorem{lemma}[definition]{Lemma}

\newtheorem*{theorem*}{Theorem}

\newtheorem{mainthm}{Theorem}

\newtheoremstyle{italic}
  {.5\topsep}{.5\topsep}{\normalfont}{}
  {\itshape}{.}{.5em}
  {\thmname{#1}\thmnumber{ \textit{#2}}\thmnote{ \normalfont(#3)}}

\theoremstyle{italic}
\newtheorem{remark}[definition]{Remark}
\newtheorem{construction}[definition]{Construction}
\newtheorem{examplex}[definition]{Example}
\newenvironment{example}
  {\pushQED{\qed}\examplex}
  {\popQED\endexamplex}

\usepackage{mathtools} 
\mathtoolsset{showonlyrefs,showmanualtags}
\numberwithin{equation}{section}

\newcommand{\R}{\mathbb{R}}

\newcommand{\C}{\mathbb{C}}

\let\O\relax
\newcommand{\O}{\mathcal{O}}
\let\P\relax
\newcommand{\P}{\mathbb{P}}
\newcommand{\PP}{\mathbb{P}}
\newcommand{\A}{\mathbb{A}}

\newcommand{\cV}{\mathcal{V}}
\newcommand{\cL}{\mathcal{L}}

\newcommand{\mO}{\mathcal{O}}

\newcommand{\dsheaf}{\overset{\bullet}{\omega}}

\newcommand{\conv}{\operatorname{Conv}}
\newcommand{\wt}{\widetilde}

\DeclareMathOperator{\gr}{Gr}
\newcommand{\grk}{\operatorname{Gr}(k,k+2)}

\newcommand{\aA}{\mathcal{A}}

\newcommand{\crank}{\operatorname{cr}}
\newcommand{\sing}{\operatorname{Sing}}
\newcommand{\open}{\operatorname{int}}

\newcommand{\res}{\operatorname{Res}}

\newcommand{\grw}{\operatorname{gr}^W}
\newcommand{\grf}{\operatorname{gr}_F}
\newcommand{\olog}{\Omega_{\log}}
\newcommand{\oOmega}{\mathring{\Omega}}

\newcommand{\into}{\hookrightarrow}

\newcommand{\cF}{\mathcal{F}}
\newcommand{\cO}{\mathcal{O}}

\title{{The $m=2$ amplituhedron is a positive geometry}}
\author{Joris Koefler, Dmitrii Pavlov, Rainer Sinn}
\date{}

\begin{document}

\maketitle

\begin{abstract}
    Amplituhedra are semi-algebraic sets of fundamental importance in particle physics. Their discovery initiated the field of Positive Geometry.
    We settle a central conjecture in this field: we show that the $m=2$ amplituhedron is a positive geometry.
    Our main new tool is a Hodge-theoretic criterion for being a positive geometry, connecting the work of Brown--Dupont with the original setup of Arkani-Hamed--Bai--Lam.
\end{abstract}

\section{Introduction}\label{sec:intro}

The \emph{amplituhedron} $\mathcal{A}_{n,k,m}$, 
introduced in \cite{Amplituhedron}, is a semi-algebraic subset of the Grassmannian.
It plays an important role in particle physics, where it is used to calculate scattering amplitudes in $\mathcal{N}=4$ Super Yang--Mills quantum field theory. 
It is also a central object in Positive Geometry \cite{whatisposgeo}, a new field at the intersection of mathematics and theoretical physics. 
The key notion in this field is that of a \emph{positive geometry} \cite{pos_geom_2017}. This is a pair $(X,S)$ consisting of a real projective variety $X$ and a semi-algebraic subset $S$ such that there exists a unique rational differential top form on $X$ with simple poles along the boundary of $S$ satisfying a number of additional technical properties.
Known examples of positive geometries include polytopes, positive parts of toric varieties and the totally non-negative Grassmannian.
It is, however, difficult to prove that a given pair $(X,S)$ is a positive geometry in general.
In particular, a central conjecture \cite[Conjecture 6.3]{pos_geom_2017} that has remained open since the field was established is that amplituhedra are positive geometries.

\medskip

In this article we settle this conjecture for the $m=2$ amplituhedron.
The cases $m=2$ and $m=4$ are of immediate relevance for physics.
Our approach is motivated by an exciting recent development in the theory of positive geometries.
In \cite{brown-dupont} Brown and Dupont recast this notion in the language of Deligne's mixed Hodge theory, providing a  systematic framework to study positive geometries. In this framework, positive geometries arise from pairs of complex varieties for which the genus, a certain Hodge-theoretic invariant, vanishes. 
The new framework, however, is not equivalent to the original one proposed in \cite{pos_geom_2017}.
At the heart of the latter are semi-algebraic sets, whereas the former is purely about the complex geometry and topology of a pair of varieties.
In this article, we establish a conceptual link by providing a sufficient mixed Hodge-theoretic criterion for being a positive geometry. We then apply it to the $m=2$ amplituhedron.
The result can be phrased as follows, and the necessary terminology and notation are introduced in Sections \ref{sec:prelim} and \ref{sec:PG_MHS}.

\begin{mainthm} \label{thm:main_1}
        Let $S$ be a non-empty regular semi-algebraic subset of a smooth real projective variety $X$ of dimension $n$ with orientable interior.
    Let $D$ be the algebraic boundary of $S$.
    Suppose $S$ has a well-defined face stratification $\cF(S)$ which is a graded poset of rank $n$. Let $R:=\mathcal{R}(S)$ be the residual arrangement of $S$.
    Further, assume for every face $F$ of $S$ of dimension $d>0$ with Zariski closure $V$ the following.
    \begin{enumerate}
        \item $V$ is normal and $V\setminus W$ is smooth, where $W$ is the union of all varieties in the algebraic stratification $\cL(D)$ that are codimension two subsets of $V$;

        \item $H^d(V,\mathcal{O}_V)=0$;

        \item The interior of $F$ is contained in $V\setminus D_V$, where $D_V$ is the union of all varieties in $\mathcal{L}(D)$ that are codimension one subsets of $V$.
    \end{enumerate}
    Suppose $(X\setminus R,D\setminus R)$ is a recursive genus zero pair, and the set $R$ is disjoint from $S$.
    Then $(X,S)$ is a positive geometry with canonical form  $r_*\oOmega(\iota_*[S])$.
\end{mainthm}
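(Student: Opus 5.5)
We argue by induction on $n=\dim X$, verifying the two axioms of \cite{pos_geom_2017} for the form $\Omega(S):=r_*\oOmega(\iota_*[S])$. These axioms are: the residue of $\Omega(S)$ along each facet hypersurface is the canonical form of that facet, and $\Omega(S)$ is the unique such form.

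\textbf{Construction of the form.} The class $[S]$ lies in the relative homology $H_n(X,D)$, since $\partial S\subset D$ by regularity. Because $R\cap S=\emptyset$, it lifts via $\iota$ to a class $\iota_*[S]\in H_n(X\setminus R, D\setminus R)$. Orientability of the interior makes this fundamental class well defined. The recursive genus zero hypothesis on $(X\setminus R, D\setminus R)$, together with the Brown--Dupont machinery from Section~\ref{sec:PG_MHS}, yields a canonical map $\oOmega$ from this homology to the space of log forms $H^0(\olog^n)$. Pushing forward along $r$ gives a rational top form on $X$ with at most simple poles along $D$ (and possibly $R$). I would then check that the poles along components of $R$ cancel. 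This should follow because $R$ is the residual arrangement and the residues along $R$ correspond to boundaries of $\iota_*[S]$ supported in $R$, which are zero since $R\cap S=\emptyset$.

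\textbf{Residue axiom.} For each facet $F$ with Zariski closure $V$, a component of $D$, the compatibility of $\oOmega$ with residues gives
\[
\res_V \oOmega(\iota_*[S]) = \oOmega(\partial_V \iota_*[S]),
\]
which is the Brown--Dupont statement that $\oOmega$ intertwines the boundary map in relative homology with the residue. The boundary of $[S]$ restricted to $V$ is $\pm[F]$, with orientation induced from the interior of $S$. I then apply the induction hypothesis to $(V,F)$. This requires checking that $(V,F)$ satisfies the hypotheses of Theorem~\ref{thm:main_1}:
\begin{itemize}
\item the faces of $F$ are faces of $S$, so the face poset is a graded interval;
\item conditions 1--3 restrict to $V$;
\item the recursive genus zero property passes to strata by definition of ``recursive'';
\item $R_V\cap F=\emptyset$.
\end{itemize}
There is a technical point: $V$ is only normal, with smooth locus $V\setminus W$. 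I would work on $V\setminus W$, where the log-form theory applies, and then extend the form across $W$ using normality and codimension two (Hartogs). Condition 3 ensures the interior of $F$ lies in the smooth locus, away from lower strata, so the fundamental class $[F]$ makes sense there. In dimension $0$ the canonical form is $\pm1$ and the base case is immediate.

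\textbf{Uniqueness.} The difference of two candidate forms has zero residue along every component of $D$, so it is a regular top form on $X$. By Serre duality $H^0(X,\omega_X)\cong H^n(X,\O_X)^\vee$, and condition 2 applied to the top face $F=S$, $V=X$, gives $H^n(X,\O_X)=0$, so the difference vanishes. The same argument on each face gives the recursive uniqueness. I also need the form to be nonzero, which holds because $\iota_*[S]\neq 0$ for nonempty $S$ and $\oOmega$ is injective in the genus zero case.

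I expect the main obstacle to be the residue compatibility on singular closures $V$. Identifying the boundary of $\iota_*[S]$ with $\iota_*[F]$ inside $H_{n-1}(V\setminus R, D_V\setminus R)$ requires care with orientations and with the part of $D\cap V$ lying in $W$. I would first handle everything on $V\setminus W$ and then use normality to extend.
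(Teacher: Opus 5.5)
Your architecture matches the paper's: the form is $r_*\oOmega(\iota_*[S])$, residues come from Proposition~\ref{prop:residues} combined with Lemma~\ref{lem:partial-boundary-face}, and uniqueness comes from normality plus $H^d(V,\O_V)=0$. However, two steps fail as written.

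\textbf{The induction.} You cannot induct on the theorem itself. The theorem assumes a smooth ambient variety, while the face strata $V$ are only normal. Moreover, applied to $(V,F)$ its hypotheses concern $\partial_aF$ and $\mathcal{R}(F)$. What you are actually given is $g(V\setminus R,D_V\setminus R)=0$, where $D_V$ is built from $\cL(D)$. So ``conditions 1--3 restrict to $V$'' and ``recursive genus zero passes to strata'' are asserted, not checked.

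Your patch of working on $V\setminus W$ breaks the recursion. For each facet stratum $V'\subset V$ of $F$, the set $W\cap V'$ has codimension one in $V'$. It contains the codimension-two face strata of $F$, i.e.\ the boundaries of the facets of $F$. So at the next level you would have discarded exactly the poles you need.

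In fact no patch is needed. Brown--Dupont only require smoothness off the boundary. Construction~\ref{const:good_cmp}, Lemma~\ref{lem:open_canonical_map} and Proposition~\ref{prop:residues} are set up for a normal ambient variety that is singular only inside a codimension-two subset of the boundary. The paper therefore never re-applies the theorem. It runs the recursion on iterated residues inside the single stratification $\cL(D\setminus R)$, where the recursive genus zero hypothesis supplies the needed genus vanishing at every level directly.

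The geometric input you are missing is this: because $\cF(S)$ is graded and $R$ has been removed, every codimension-two stratum in $V$ lies in $D_V$. Hence $V\setminus D_V\subset V\setminus W$ is smooth. This is exactly what Lemma~\ref{lem:partial-boundary-face} and Proposition~\ref{prop:residues} require.

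\textbf{Nonvanishing.} Your argument that the form is non-zero is wrong: $\oOmega$ is not injective on genus zero pairs. The Brown--Dupont map factors through $\grw_0H_n$ and kills $W_{-1}H_n$. Example~\ref{ex:nonstrong_pair} shows this concretely. The genus zero pair $(\P^3,H_4\cup Q)$ has non-zero $H_3$, which contains the class of the half-ball. Yet $\olog^3(\P^3\setminus(H_4\cup Q))\cong H^0(\P^3,\O_{\P^3}(-1))=0$.

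Axiom (a) demands more than a non-zero form: it demands a pole along \emph{every} component of $D$. What secures this is gradedness. By Remark~\ref{rem:conrner_top_implies_corner_face}, every face has a vertex. Proposition~\ref{prop:residues} in dimension one, together with the zero-dimensional normalization, makes the iterated residue at each vertex equal to $\pm1$. Hence each $\res_V$ of the form is non-zero.

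\textbf{A smaller point.} $R$ has codimension at least two, since every component of $D$ is a facet stratum. So there are no divisorial poles along $R$ to cancel. What must vanish are the iterated residues along residual strata. This holds by construction (Remark~\ref{rem:everything_extends}), because the form has logarithmic poles only along $\mathcal{D}$ and none along $\mathcal{B}$. It does not follow from your heuristic about boundaries supported in $R$.
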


The mathematical structure of amplituhedra has been studied extensively over the last decade.
The description of the $m=1$ amplituhedron was given in \cite{karpwilliams}, and in this case it is a linear object.
For the case of $m=2$ the boundary structure was described in \cite{lukowski} and \cite{lam_face_structure}. Tilings (i.e. subdivisions with special properties) of the $m=2$ amplituhedron are one of the central subjects of \cite{bao2019tilings}, \cite{LukowskiParisiWilliams2023}, and \cite{parisi:meq2}.
Tilings in the $m=4$ case are studied in \cite{BCFW2025}, \cite{m4tilings2023} and subsequent work.
Other interesting developments include, for instance, \cite{Galashin2024origamiI, Galashin2026origamiII,GalashinLam2020}, and the full body of work is too large to describe here.
Despite this vast amount of deep work, the central conjecture \cite[Conjecture 6.3]{pos_geom_2017} that amplituhedra are positive geometries has only been settled in a few special cases. 
\begin{enumerate}
    \item[$\bullet$] $k=1$: The amplituhedron is a cyclic polytope \cite{bernd1988}, and polytopes are positive geometries \cite[Section 6]{pos_geom_2017}. 
    \item[$\bullet$] $m=1$: The amplituhedron is the union of bounded cells of a cyclic hyperplane arrangement \cite{karpwilliams}, so again a polyhedral object. 
    \item[$\bullet$] $k+m=n$: In this case the amplituhedron is isomorphic to $\mathrm{Gr}(k,n)_{\geq 0}$, which gives rise to a positive geometry by \cite[Theorem 8]{lam2024invitation}.
    \item[$\bullet$] $k=m=2$: This is proven in \cite{amplituhedron_adjoint}.
\end{enumerate}

Our mixed Hodge-theoretic criterion in Theorem~\ref{thm:main_1}, combined with combinatorial results mentioned above, implies that the $m=2$ amplituhedron is a positive geometry.
This is, however, not at all straightforward. 
A natural way to go from a pair $(X,S)$ with a semi-algebraic set $S$ to a pair of varieties $(X,D)$ with a divisor $D\subset X$ is to set $D$ to be the algebraic boundary $\partial_a S$ of $S$, i.e. the Zariski closure of its boundary in the analytic topology on $X$. 
Setting $X:=\mathrm{Gr}(k,k+2)$ and $D:=\partial_a \mathcal{A}_{n,k,2}(Z)$, however, does not provide a genus zero pair. 
Indeed, the main result in \cite{KPScube} states that for $k\geq 3$ and large enough $n$ the pair $(X,D)$ has strictly positive genus. 
We will therefore have to alter this pair to reduce its genus in a way that locally preserves the geometry of the amplituhedron. It is not clear a priori that this is possible at all.
We now briefly comment on our approach. 

\medskip

As we show in \cite{KPScube}, the obvious obstacles for the pair $(\mathrm{Gr}(k,k+2),\mathcal{\partial}_a \mathcal{A}_{n,k,2}(Z))$ to have genus zero come from the \emph{residual arrangement} of the amplituhedron, see Definition \ref{def:faces_residual}.
These are strata of $\mathcal{\partial}_a \mathcal{A}_{n,k,2}(Z)$ that are disjoint from the semi-algebraic set $\mathcal{A}_{n,k,2}(Z)$.
In this article we will show that \emph{all} obstacles come from the residual arrangement, i.e. that each face of the amplituhedron is supported on a variety of genus zero (Proposition \ref{prop:faceshavegenuszero}).
We will then remove the residual arrangement from each of the varieties in the original pair, thus removing positive contributions to the genus.
In order to show that the new pair satisfies the assumptions of Theorem \ref{thm:main_1}, we will have to deal with subtleties of genera and differential forms on singular and open varieties.
After this, our main result follows from Theorem \ref{thm:main_1}.

\begin{mainthm} \label{thm:main_2}
        Let $Z$ be a generic $n\times (k+2)$ matrix with positive maximal minors.
    Then $(\gr(k,k+2), \mathcal{A}_{n,k,2}(Z))$ is a positive geometry. 
\end{mainthm}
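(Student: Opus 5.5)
The plan is to deduce Theorem~\ref{thm:main_2} from Theorem~\ref{thm:main_1}, applied with $X=\gr(k,k+2)$ and $S=\mathcal{A}_{n,k,2}(Z)$. The work is in checking the hypotheses of Theorem~\ref{thm:main_1} one at a time.

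\textbf{Basic setup.} First, $X$ is a smooth real projective variety of dimension $2k$. Its orientability, and that of the interior of $S$, will follow from the known description of $\mathcal{A}_{n,k,2}(Z)$ as a closed ball with interior a cell; this ball description comes from the sign-flip and tiling results. Regularity of $S$ holds because $S$ is the closure of its interior, being the image of the closure of the totally positive Grassmannian.

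\textbf{Face stratification.} Next I would use the boundary stratification of \cite{lukowski, lam_face_structure}. Its faces are indexed by explicit combinatorial data built from sign patterns and twistor coordinates $\langle Y i\, i{+}1\rangle$, $\langle Y i j\rangle$. This poset will be shown to be graded of rank $2k$, which gives the well-defined face stratification $\cF(S)$. The Zariski closure $V$ of each face is an explicit Schubert-like variety, cut out by vanishing of twistor coordinates $\langle Y\,a\,b\rangle$ with $a,b$ adjacent or from a prescribed set. Concretely, $V$ consists of subspaces $Y$ meeting certain fixed lines $\overline{Z_aZ_b}$ in $\PP^{k+1}$.

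From this description I would verify the three local conditions.
\begin{enumerate}
\item[(1)] Normality holds because these are intersections of Schubert-type conditions with respect to generic flags, so each $V$ is a Richardson-type variety or a transverse intersection of such, and these are normal. Smoothness away from $W$ should follow from identifying the singular locus of such intersections with loci where two conditions collide. Those are codimension-two strata of $\cL(D)$.
\item[(2)] $H^d(V,\mathcal{O}_V)=0$ follows since these varieties are rational with rational singularities (Richardson varieties satisfy Kodaira-type vanishing of higher cohomology of $\mathcal{O}$).
\item[(3)] The interior of each face avoids all codimension-one strata $D_V$. This is a sign statement: on the open face, the twistor coordinates defining the other facets are nonzero by the sign-flip characterization.
\end{enumerate}

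\textbf{Residual arrangement and genus.} I then need $R\cap S=\emptyset$, which is the defining property of the residual arrangement (Definition~\ref{def:faces_residual}) together with closedness of the strata involved. The central remaining step is that $(X\setminus R, D\setminus R)$ is a recursive genus zero pair. Here I would invoke Proposition~\ref{prop:faceshavegenuszero}, which says each face is supported on a variety of genus zero. To pass from this to the removed pair, I would argue recursively along the poset. For each stratum $V\setminus R$, the boundary $D_V\setminus R$ consists exactly of closures of facets of the face, up to residual components that have been removed. The genus is computed from the top graded piece of the weight filtration of $H^d(V\setminus R, D_V\setminus R)$. Removing $R$ only kills the classes that obstruct vanishing, because the positive genus contributions identified in \cite{KPScube} are supported on residual strata.

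\textbf{Main obstacle.} I expect this last step to be the hardest. The difficulty is controlling how deletion of $R$ interacts with mixed Hodge structures on singular open varieties, in particular showing no new top-weight $(d,0)$ classes appear. I would first try a long exact sequence of the triple, together with the vanishing $H^d(V,\mathcal{O}_V)=0$, to bound $F^d$ of the relative cohomology by logarithmic forms with poles along the facets only. Once all hypotheses hold, Theorem~\ref{thm:main_1} gives that $(\gr(k,k+2),\mathcal{A}_{n,k,2}(Z))$ is a positive geometry.
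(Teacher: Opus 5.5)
Your overall plan, verifying the hypotheses of Theorem~\ref{thm:main_1} for $S=\aA_{n,k,2}(Z)$ after deleting $R=\mathcal{R}(S)$, is the paper's. Your treatment of gradedness and of conditions (1)--(3) broadly matches its route through Lam's description of the strata, \cite{Knuston2013positroid}, and Kleiman transversality. However, two hypotheses are not actually established.

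\textbf{Disjointness.} $R\cap S=\varnothing$ is not ``the defining property'' of the residual arrangement. By Definition~\ref{def:faces_residual}, a stratum is residual merely if it is not the Zariski closure of a face, and such strata can meet $S$. In Example~\ref{ex:cornersverices} the residual corner $(0,0)$ lies in the relative interior of an edge of the polygon. This is exactly why Theorem~\ref{thm:main_1} lists disjointness as a separate hypothesis. For the amplituhedron it needs genuine input from \cite{lam_face_structure}. Each residual stratum corresponds under the twistor embedding to a positroid variety $\Pi_{\mathcal N}$ with $\mathcal N\notin P_{n,k}$ (Proposition 38 there). Only positroid varieties with $\mathcal N\in P_{n,k}$ meet $\aA_{n,k}$ (Theorem 23 there). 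Together these give Proposition~\ref{prop:residual_arrangement_disjoint}.

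\textbf{Recursive genus zero.} This is the more serious gap: the property is asserted rather than proved. The claim that removing $R$ ``only kills the obstructing classes'' because the obstructions of \cite{KPScube} live on residual strata is circular. \cite{KPScube} exhibits some obstructions; showing there are no others is precisely the point. Proposition~\ref{prop:faceshavegenuszero} concerns the compact strata $V$, and neither $g(V)=0$ nor $H^d(V,\O_V)=0$ controls the open strata $V\setminus R$. Moreover, bounding $F^d$ by logarithmic forms controls $g+\crank$, not $g$ alone. You are missing two ingredients.
\begin{enumerate}
\item[(a)] Lemma~\ref{lem:genuszero_open}: any Zariski open $U$ of a unirational projective variety $V$ with rational singularities has $\grf^0H_{\dim V}(U)=0$. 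One takes a good compactification with $\mathcal D$ over $\sing V$ and $\mathcal B$ over $V\setminus U$, and discards $\mathcal B$ by Hodge bounds. What remains is identified with $H^0(\mathcal X,\omega_{\mathcal X}(\mathcal D))$. Rational singularities give $\bar f_*\omega_{\mathcal X}(\mathcal D)=\bar f_*\omega_{\mathcal X}$, whose global sections vanish by unirationality. So rational singularities, not just $H^d(V,\O_V)=0$, are what make the exceptional poles disappear.
\item[(b)] Proposition~\ref{prop:nonSNC_genus_bound}: iterate the bound $g(X,D)\le g(X)+\sum_i g(D_i,D_i\cap\bigcup_{j\neq i}D_j)$ of \cite[Corollary 3.13]{brown-dupont}, discarding codimension-two components via \cite[Proposition 3.9]{brown-dupont}. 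This gives $g(X,D)\le\sum_{V\in\cL(D)}m_V\,g(V)$ for arbitrary, not necessarily normal crossing, $D$.
\end{enumerate}
After deleting $R$, every element of $\cL(\partial_a\aA_{n,k}\setminus R)$ is a Zariski open subset of a face stratum. Hence (a) and (b) yield $g(W,D_W)=0$ for all strata, which is Corollary~\ref{cor:all_genera_zero_implies_recursive}.

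\textbf{A smaller point.} Face strata are not Richardson varieties, and they are only known to be unirational. Condition (2) follows from rational singularities via Lemma~\ref{lem:surj} together with unirationality, as in Proposition~\ref{prop:faceshavegenuszero}.
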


This article is organized as follows.
In Section \ref{sec:prelim} we introduce the necessary notions and recall basic results that will be needed in later parts of the article.
In Section \ref{sec:PG_MHS} we recall the framework of \cite{brown-dupont}, explain how to make it applicable for non-compact varieties, and prove Theorem \ref{thm:main_1}. 
Section \ref{sec:faces} is devoted to the algebro-geometric properties of the faces of $\mathcal{A}_{n,k,2}(Z)$: we show their Zariski closures are unirational varieties with rational singularities by relying on the results of \cite{lam_face_structure}. We also show that any such variety (and any Zariski open subset thereof) has genus zero.
Finally, in Section~\ref{sec:last} we prove Theorem \ref{thm:main_2}.

\section{Preliminaries}\label{sec:prelim}

In this section we introduce the basic notions used in this article.
We recall the definitions of the totally non-negative Grassmannian, positroid varieties, the amplituhedron, and positive geometries.
We also define the algebraic boundary stratification of a codimension one subvariety and a semi-algebraic set. 

\medskip

Denote by $\gr(k,n)$ the Grassmannian of $k$-dimensional linear subspaces of $\C^n$.
We usually consider the Grassmannian in its Pl\"ucker embedding into $\P^{N}$ with $N=\binom{n}{k}-1$.
If it is clear from context, we will write $\Lambda$ both for the point in $\gr(k,n)$ and a full rank $k\times n$ matrix representing it.
The Pl\"ucker coordinates $p_I$ with $I\subset [n]\coloneqq \{1,\ldots, n\}$ of size $k$ are given by the $k\times k$ minors of $\Lambda$ indexed by the columns in $I$.

\begin{definition}[Totally non-negative Grassmannian]\label{def:non-negative_grass}
    The \emph{totally non-negative Grassmannian} $\gr(k,n)_{\geq 0}$ \cite{postnikov} is the semi-algebraic subset of $\gr(k,n)$, defined as the intersection of $\gr(k,n)$ in its Pl\"ucker embedding into $\PP^{N}$ with the non-negative orthant $\P^{N}_{\geq 0}$. 
Equivalently, $\gr(k,n)_{\geq 0}$ consists of all linear spaces represented by real matrices $\Lambda\in \R^{k\times n}$ of rank $k$ whose $k\times k$ minors are all non-negative, up to a global change of sign. 
\end{definition}

The totally non-negative Grassmannian admits a stratification into positroid cells, see, e.g., \cite{postnikov}.
The boundary structure of the $m=2$ amplituhedron, which is the main object of this article, is closely related to the positroid stratification \cite{lam_face_structure}. We thus now review the basics and fix notation.
For a subset $\mathcal{M}\subset \binom{[n]}{k}$ define
\[
  \Pi_{\mathcal{M}}^{> 0} := \{\Lambda \in \gr(k,n)_{\geq 0} \mid  p_I(\Lambda) > 0 \text{ for } I \in \mathcal{M},\; p_I(\Lambda)=0 \text{ for } I \notin \mathcal{M} \}.
\]
If  $\Pi_{\mathcal{M}}^{> 0} \neq \varnothing$, this set is called a \emph{positroid cell}. In this situation $\mathcal{M}$ is a matroid, called a \emph{positroid}. Replacing strict inequalities with non-strict ones, we get a space $\Pi^{\geq 0}_\mathcal{M}$.
The complex Zariski closure of $\Pi_{\mathcal{M}}^{> 0}$ in $\gr(k,n)$ is called a \emph{positroid variety}, and is denoted by
\[
  \Pi_{\mathcal{M}}:= \overline{\Pi_{\mathcal{M}}^{> 0}} \subset \gr(k,n).
\]
The quasi-projective \emph{open positroid varieties} $\mathring{\Pi}_{\mathcal{M}}$ are defined as 
\[
    \mathring{\Pi}_{\mathcal{M}} := \Pi_{\mathcal{M}}\setminus \left( \bigcup_{\mathcal{M}'\subset \mathcal{M}} \Pi_{\mathcal{M}'}\right).
\]
By seminal work of Postnikov \cite{postnikov}, the totally non-negative Grassmannian admits a decomposition
\[
    \gr(k,n)_{\geq 0} = \bigsqcup_{\mathcal{M}}{\Pi}^{>0}_{\mathcal{M}}
\]
into positroid cells, each of which is an open ball. Moreover, in \cite{psw} Postnikov, Speyer, and Williams prove that this decomposition equips $\gr(k,n)_{\geq 0}$ with the structure of a CW-complex.
We furthermore have $\mathring{\Pi}_{\mathcal{M}}\cap\gr(k,n)_{\geq 0} = \Pi^{> 0}_{\mathcal{M}}$.

\medskip

We now give a definition of the central object of this article.
\begin{definition}[Amplituhedron] \label{def:amplituhedron}
    Fix positive integers $n,k$, and $m$ with $k+m\leq n$. Fix an ${n\times (k+m)}$ matrix $Z$ with strictly positive maximal minors. This matrix defines the rational map $\widetilde{Z}: \gr(k,n)\dashrightarrow \gr(k,k+m)$ via $\Lambda \mapsto \Lambda Z$. The image $\widetilde{Z}(\mathrm{Gr}(k,n)_{\geq 0})$ is called the \emph{amplituhedron}~$\mathcal{A}_{n,k,m}(Z)$. 
\end{definition}
Due to the positivity assumption on $Z$, the map $\widetilde{Z}$ is regular on $\mathrm{Gr}(k,n)_{\geq 0}$, by the Cauchy--Binet formula for the maximal minors of $\Lambda Z$. So the amplituhedron is a compact semi-algebraic subset of $\gr(k,k+m)$.
Henceforth, we will often suppress the dependence on $Z$ and write $\aA_{n,k,m}$ for $\aA_{n,k,m}(Z)$.
Our next goal is to recall the definition of positive geometries in the sense of \cite{pos_geom_2017}, see also \cite{lam2024invitation}. We start with auxiliary definitions. 

\begin{definition}[Algebraic boundary]
    For a semi-algebraic set $S$ inside a real variety $X$, the \emph{algebraic boundary} of $S$ is the Zariski closure in $X$ of its boundary $\partial S$ in the analytic topology. The algebraic boundary of $S$ is denoted by $\partial_a S$. 
\end{definition}

\begin{definition}[Regular semi-algebraic sets]\label{def:regularSAset}
    Let $X$ be a real projective variety and $S$ a semi-algebraic subset.
    Then, we say $S$ is \emph{regular} if $S$ is equal to the closure of its interior in the analytic topology on $X$.
\end{definition}

We are now ready to recall the definition of positive geometries in the sense of \cite{pos_geom_2017}.
\begin{definition}[Positive geometries]\label{def:pos_geom_lam}
    Let $X$ be a real projective $d$-dimensional variety.
    Let $X_{\geq 0}$ be a regular semi-algebraic subset of $X$ with orientable interior.
    Suppose the algebraic boundary of $X_{\geq 0}$ decomposes into irreducible codimension one subvarieties $D_1,\ldots, D_r$ of $X$.
    Denote by $(D_i)_{\geq 0}$  the analytic closure of the interior of $D_i\cap \partial X_{\geq 0}$ in $D_i(\R)$.
    The pair $(X, X_{\geq 0})$ is called a \emph{positive geometry} if there exists a unique rational $d$-form $\Omega_{(X,X_{\geq 0})}$ on $X$, called the \emph{canonical form}, satisfying the following properties.
    \begin{enumerate}[label=(\alph*)]
        \item If $d > 0$, the form $\Omega_{(X,X_{\geq 0})}$ has only simple poles, and these are exactly along $D_1,\ldots,D_r$. 
        \item For each component $D_i$ of the algebraic boundary, the pair $(D_i,(D_i)_{\geq 0})$ is a positive geometry; 
         the residue $\operatorname{Res}_{D_i} \Omega_{(X, X_{\geq 0})}$ equals the canonical form $\Omega_{(D_i,(D_i)_{\geq 0})}$.
        \item If $d=0$, $X=X_{\geq 0}$ is a point and $\Omega_{(X, X_{\geq 0})} = \pm 1$. 
        The sign determines the orientation on this $0$-dimensional manifold.
    \end{enumerate}
\end{definition}

\begin{remark}
    The uniqueness of the canonical form is essentially equivalent to the statement that $X$ has no non-zero holomorphic top forms if $X$ is a normal variety: adding such a form to a canonical form would produce another canonical form, and the difference between two canonical forms is a holomorphic form, since they have the same poles and residues along these poles. See the proof of Theorem \ref{thm:fromBDtoPG} for more details.
\end{remark}

The amplituhedron is a top-dimensional regular semi-algebraic set in $\gr(k,k+m)$.
The central goal of this article is to prove that the $m=2$ amplituhedron is a positive geometry for arbitrary values of $k$ and $n$ and generic choices of $Z$.
Definition \ref{def:pos_geom_lam} suggests that to do this, it is essential to understand the algebraic boundary $\partial_a \mathcal{A}_{n,k,2}(Z)$. We now introduce the notion of the algebraic boundary stratification of a semi-algebraic set. 

\begin{definition}[Algebraic boundary stratification] \label{def:alg_bd_strat}
    Let $X$ be a complex variety and $Y$ a subvariety of $X$ of pure codimension one.
    We define the \emph{algebraic stratification} $\mathcal{L}(Y)$ of $Y$ as the smallest poset (ordered by inclusion) satisfying the following two properties:
    \begin{enumerate}
        \item $X$ and each irreducible component of $Y$ are elements of $\mathcal{L}(Y)$.
        \item For any finite number of elements in $\mathcal{L}(Y)$ with non-empty intersection, each irreducible component of this intersection is again an element of $\mathcal{L}(Y)$. 
    \end{enumerate}
    If $S$ is a regular semi-algebraic subset of a projective variety $X$, with algebraic boundary $\partial_aS$, then we define the \emph{algebraic boundary stratification} of $S$ as $\cL(S)\coloneqq \cL(\partial_aS)$. We call the elements of $\cL(S)$ \emph{boundary strata} of $S$. These objects are defined analogously inside a variety of the form $X\setminus R$, where $R$ is a closed subvariety of $X$ disjoint from $S$. 
\end{definition}

\begin{remark}
    If $S$ is a regular semi-algebraic subset of a smooth variety $X$, then its algebraic boundary $\partial_a S$ is empty or pure codimension $1$. This follows from standard results in semi-algebraic dimension theory along the lines of \cite[Lemma~2.5]{Sinn2015}.
\end{remark}

\begin{example}
    When $S$ is a polytope in $\mathbb{P}^n$, the algebraic boundary stratification $\mathcal{L}(S)$ is isomorphic to the poset of flats of the facet hyperplane arrangement of $S$.
\end{example}

\begin{example}\label{exm:bd_strat_gr(2,4)}
    The algebraic boundary stratification of $\gr(k,n)_{\geq 0}$ is its positroid stratification (and is isomorphic to the cell decomposition in \cite{postnikov}, see also \cite[Section 2.3]{williams2023positive}). 
    The case $\gr(2,4)_{\geq 0}$ already shows the importance of taking the irreducible components of the intersections of boundary divisors. 
    The algebraic boundary of $\gr(2,4)_{\geq 0}$ consists of four irreducible components, given inside $\gr(2,4)$ by the equations $p_{12}=0$, $p_{14}=0$, $p_{23}=0$ and $p_{34}=0$, where $p_{ij}$ are the Pl\"ucker coordinates.
    However, the intersection $\{p_{12}=p_{23}=0\}$ in $\gr(2,4)$ is reducible: it consists of two planes, each containing a positroid cell.
    Each of these two planes is an element of $\mathcal{L}(\gr(2,4)_{\geq 0})$.
\end{example}

We conclude this section by defining faces, the residual arrangement, corners, and vertices of a semi-algebraic set. 

\begin{definition}[Faces and the residual arrangement]\label{def:faces_residual}
    Let $S$ be a regular semi-algebraic subset of a real projective variety $X$.
    We define the {faces} of $S$ recursively in dimension as follows. For an irreducible component $D$ of $\partial_a S$, we define the \emph{facet} $S\vert_D$ of $S$ on the boundary stratum $D$ as the closure of the interior of $D\cap \partial S$.
    We continue recursively by setting $S\vert_{D'}$ to be the closure of the interior of $D'\cap \partial S\vert_D$ where $D'$ is an irreducible component of $\partial_a S\vert_D$. The sets obtained in this recursion are called the \emph{faces} of $S$.
    If the Zariski closures of the faces, called \emph{face strata}, are elements of the algebraic boundary stratification $\cL(S)$, then the collection of these strata is called \emph{face stratification} of $S$, denoted $\cF(S)$.
    By convention, the face stratification includes $X$.
    Any stratum in $\cL(S)$ that is not in $\mathcal{F}(S)$ is called \emph{residual}.  
    The \emph{residual arrangement} of $S$, denoted $\mathcal{R}(S)$, is the union of all residual strata.
\end{definition}

\begin{remark}
    Note that, by construction, all faces of a regular semi-algebraic set $S$ are regular semi-algebraic sets again.
\end{remark}

\begin{definition}[Corners and vertices]\label{def:corner_vertices}
   Let $S$ be a regular semi-algebraic subset of a smooth real projective variety $X$.
    A \emph{corner} of $S$ is any zero-dimensional boundary stratum in $\cL(S)$. 
    A \emph{vertex} of $S$ is a zero-dimensional face of $S$.
\end{definition}

\begin{remark}\label{rem:conrner_top_implies_corner_face}
    Suppose we have a full-dimensional semi-algebraic subset $S\subset X$ 
    such that its face stratification $\cF(S)$ is a graded poset of rank $n:=\dim X$.
    Then each face of $S$ has a vertex. 
    Indeed, the Zariski closure of every face is contained in a maximal chain of $\cF(S)$ and, since all such chains in  $\cF(S)$ have length $n+1$, must contain a zero-dimensional element.
\end{remark}

\begin{example}
    We continue with the totally non-negative Grassmannian $\gr(2,4)_{\geq 0}$ from Example \ref{exm:bd_strat_gr(2,4)}, where we saw that we only obtain all positroid strata when also allowing for intersecting the irreducible components of the intersections of the four algebraic boundary components.
    This allows us to find zero-dimensional strata in $\cL(\gr(2,4)_{\geq 0})$: there are six corners corresponding to the points in $\gr(2,4)$ representing the lines $\operatorname{span}_{\C}(e_i,e_j)$, where $e_i$ are the standard basis vectors of $\C^4$.
    All of these are vertices of $\gr(2,4)_{\geq 0}$ since their Pl\"ucker coordinates are all obviously non-negative.
\end{example}

Let us also demonstrate an example in which not all corners are vertices.

\begin{example} \label{ex:cornersverices}
    Consider the non-convex polygon $X_{\geq 0}$ in $\R^2$ given by the union of the two triangles $\conv\{(0,0),(-1,0),(0,2)\}$ and $\conv\{(0,0),(1,0),(0,1)\}$, see Figure \ref{fig:nonconvex-polygon}.
    \begin{figure}[htbp]
      \centering
      \begin{tikzpicture}[
        scale=2.5,
        line join=round,
        vertex/.style={circle, fill=black, inner sep=1.6pt},
        residual/.style={
          circle, draw=black, fill=white,
          inner sep=1.8pt, line width=0.7pt
        }
      ]
        \coordinate (A) at (-1,0);
        \coordinate (O) at (0,0);
        \coordinate (B) at (1,0);
        \coordinate (C) at (0,1);
        \coordinate (D) at (0,2);
        \coordinate (R) at ({-1/3},{4/3});
    
        \fill[gray!15] (A)--(B)--(C)--(D)--cycle;
    
        \draw[dashed, gray] (O)--(C);
        \draw[dashed, gray] (C)--(-0.65,1.65);
    
        \draw[thick] (A)--(B)--(C)--(D)--cycle;
    
        \node[vertex, label=below left:{$(-1,0)$}] at (A) {};
        \node[vertex, label=below right:{$(1,0)$}] at (B) {};
        \node[vertex, label=right:{$(0,1)$}] at (C) {};
        \node[vertex, label=above:{$(0,2)$}] at (D) {};
    
        \node[residual, label=below:{$(0,0)$}] at (O) {};
        \node[residual,
          label=left:{$\left(-\frac13,\frac43\right)$}
        ] at (R) {};
    
        \node at (-0.30,0.55) {$X_{\geq 0}$};
      \end{tikzpicture}
      \caption{The non-convex polygon $X_{\geq 0}$ and its six
        corners. The two residual corners are marked by open circles.}
      \label{fig:nonconvex-polygon}
    \end{figure}
    This configuration has six corners, namely the five vertices of the two triangles ($(-1,0)$, $(0,0)$, $(1,0)$, $(0,1)$, $(0,2)$) and the intersection point  $(-1/3,4/3)$ of the two lines spanned by $(-1,0)$ and $(0,2)$, and $(0,1)$ and $(1,0)$, respectively. Among these six points, the four vertices of the non-convex polygon are the vertices according to our definition. The corner $(0,0)$, for example, is not a vertex, because it is an interior point of the one-dimensional face, which is the line segment connecting the points $(-1,0)$ and $(1,0)$. It is therefore a residual stratum.
    The numerator of the canonical form of this set is a linear polynomial that vanishes at the residual points, i.e., at~the corners that are not vertices.
\end{example}

\section{A mixed Hodge theory criterion for positive geometries}\label{sec:PG_MHS}

As mentioned in the introduction, the framework for positive geometries suggested by Brown and Dupont \cite{brown-dupont} based on mixed Hodge theory is not equivalent to that proposed originally in \cite{pos_geom_2017}.
Therefore, a natural goal is to find a clear criterion describing when the mixed Hodge theory approach gives rise to positive geometries as in Definition \ref{def:pos_geom_lam}.
In this section we make this comment more precise and give such a criterion, which is the main conceptual result of this article.

\subsection{Recalling the basics}
We start by recalling the basic notions used in \cite{brown-dupont}.
\begin{definition}\label{def:mhs}
    Let $H$ be a finite-dimensional $\mathbb{Q}$-vector space with a decreasing filtration $F^\bullet$ on the complexification $H_{\mathbb{C}}$ called \emph{Hodge filtration}, i.e.
    \[
        \ldots \subset   F^{i+1}H_\C \subset F^{i}H_\C \subset \ldots 
    \]
    We say $H$ is a \emph{pure Hodge structure of weight $k$} if there exists a decomposition \linebreak $H_\C=\bigoplus_{p+q=k}H^{p,q}_{\C}$ with $\overline{H^{q,p}}=H^{p,q}$, and such that 
    \[
        F^aH_\C=\bigoplus_{\substack{p\geq a\\p+q=k}}H^{p,q},
    \]
    for all integers $a$.
    Let $W_\bullet$ be an increasing filtration on $H$ called \emph{weight filtration}, i.e.
     \[
         \ldots \subset W_{i}H \subset W_{i-1}H \subset \ldots.  
     \]
    For any non-negative integer $p$ write
    \[
        \grf^p H_\C \coloneqq \frac{F^pH_\C}{F^{p+1}H_\C},\quad \text{and }\grw_w H := \frac{W_w H}{W_{w-1} H}.
    \]
    We say $H$ is a \emph{mixed Hodge structure} if for every integer $w$, the filtration induced by $F^\bullet$ on $(\grw_wH)_\C$ defines a pure Hodge structure of weight $w$.
    For $w=p+q$ one defines
    \[
        H^{p,q} := (\operatorname{gr}^W_w H)_{\C}^{p,q}=\grf^p(\grw_w H)_{\C}.
    \]
    The corresponding mixed \emph{Hodge numbers} are the complex vector space dimensions
    \[
        h^{p,q} = \dim H^{p,q} = h^{q,p}.
    \]
\end{definition}

\begin{remark}
    See \cite{Deligne1971} for the construction of these two filtrations.
    In our setting, we will exclusively deal with mixed Hodge structures on the $k$-th relative homology group with rational coefficients of a pair of complex varieties $(X,D)$, where $X$ is of dimension $n$ and $D$ is a closed subvariety of $X$.
    Then, for the Hodge numbers we explicitly write
    \begin{align}\label{eq:Hodge_numbers}
        h^{p,q}(H_k(X,D)) = \dim \operatorname{gr}_F^p\bigl(\operatorname{gr}^W_{p+q} H_{k}(X,D)\bigr)_{\C}.
    \end{align}
\end{remark}

The framework suggested in \cite{brown-dupont} relies on the introduction of two Hodge-theoretic invariants of the mixed Hodge structure of $H_n(X,D)$.
The first is the \emph{genus} $g(X,D)$ of the pair which is defined as the sum of the ``outer'' Hodge numbers
\[
    g(X,D) \coloneqq \sum\limits_{p>0} h^{-p,0}(H_n(X,D)).
\]
When $D=\varnothing$, we write $g(X):=g(X,\varnothing)$.

The second invariant is the \emph{combinatorial rank} which is defined as
\[
    \crank(X,D)\coloneqq h^{0,0}(H_n(X,D)).
\]
Similarly, if $D=\varnothing$, we write $\crank(X)\coloneqq \crank(X,\varnothing)$.
The main theorem in the work of Brown and Dupont states that for any pair $(X,D)$ of genus zero such that $X\setminus D$ is smooth and $X$ is compact, there is a well-defined map, essentially induced by Poincar\'e duality,
\begin{equation}\label{eq:BD}
    \Omega: H_n(X,D)\twoheadrightarrow \Omega_{\mathrm{log}}^n(X\setminus D)
\end{equation}
associating with each relative homology cycle $\sigma \in H_n(X,D)$ a holomorphic $n$-form on $X\setminus D$ with logarithmic poles along $D$ (the target space $\Omega_{\log}^n(X\setminus D)$ is the space of such forms).
This form is called the \emph{canonical form} of $\sigma$.
Canonical forms defined in this way satisfy a number of nice properties, including compatibility with taking residues, additivity and functoriality.
If $g(X,D)=0$ and if $\sigma \in H_n(X,D)$ is the homology class of a semi-algebraic set $X_{\geq 0}\subset X(\mathbb{R})$ with boundary contained in $D$ and connected interior such that $(X,X_{\geq 0})$ is a positive geometry with canonical form $\Omega_{(X,X_{\geq 0})}$, then by \cite[Lemma 2.12]{logcanonical} $\Omega(\sigma)=\Omega_{(X,X_{\geq 0})}$ is the canonical form of this positive geometry in the original sense of \cite{pos_geom_2017}.
In the most natural case we have $D=\partial_a X_{\geq 0}$.

\begin{remark}
    While Brown and Dupont define canonical forms, they do not give a definition of a positive geometry in the language of mixed Hodge theory. Such a definition is presented in \cite{sturmfels2026positivegeometriescubicsurfaces} and is based on the notion of \emph{positive arrangements}. This definition is not equivalent to the one in \cite{pos_geom_2017}, as the notion of positive arrangements is not recursive in nature. One can show that the $m=2$ amplituhedron is a positive geometry of the positive arrangement obtained from $(\mathrm{Gr}(k,k+2), \partial_a \mathcal{A}_{n,k,2}(Z))$ by blowing up all desingularized residual strata in increasing dimension. Such a sequence of blowups keeps the ambient variety smooth but introduces new singularities to the face strata of the amplituhedron. To show that the $m=2$ amplituhedron is a positive geometry in the original sense of \cite{pos_geom_2017} we will thus pursue a different strategy based on removing the residual arrangement rather than blowing it up. 
\end{remark}

We now comment on the inequivalence of the frameworks of \cite{pos_geom_2017} and \cite{brown-dupont}.
On the one hand, the framework of \cite{brown-dupont} is purely complex. 
There exist pairs $(X,D)$ of genus zero for which the variety $D$ does not have real points and thus cannot contain the boundary of any semi-algebraic set.
This is the case e.g. when $X$ is a rational curve and $D$ is a complex point on it. On the other hand, as we demonstrate in our previous article \cite[Section 6]{KPScube}, there exist semi-algebraic sets $X_{\geq 0}\subset X$ such that $(X,X_{\geq 0})$ is a positive geometry, but $X_{\geq 0}$ gives rise to a homology cycle in $H_n(X,D)$ with $g(X,D)>0$, where $D$ is the algebraic boundary of $X_{\geq 0}$.
However, in many important cases (for instance, for convex polytopes) these frameworks are in perfect agreement.

\medskip

In view of this, it is desirable to have a criterion in the language of mixed Hodge theory for being a positive geometry in the sense of \cite{pos_geom_2017}. The main goal of this section is to present such a criterion, see Theorem \ref{thm:fromBDtoPG}. 

\medskip

The first step towards Theorem \ref{thm:fromBDtoPG} is the definition of recursive genus zero pairs. Note that in the definition below the varieties are not assumed to be compact. 

\begin{definition}[Recursive genus zero pairs]\label{def:recursive_genus_zero} 
    Let $(X,D)$ be a pair of complex varieties with $X$ irreducible.
    Let $\cL(D)$ be the algebraic stratification of $D$.
    For each $W\in\cL(D)$, denote by $D_W\subset W$ the union of all codimension one subvarieties of $W$ that are elements of $\cL(D)$.
    We say $(X,D)$ is a \emph{recursive genus zero pair} if $g(W,D_W)=0$ for all $W\in \cL(D)$.
\end{definition}

\begin{remark}
    As demonstrated in \cite[\S 2.4.3]{brown-dupont}, having genus zero is not a recursive condition. Thus, not every genus zero pair is a recursive genus zero pair. 
\end{remark}

\begin{remark}
    By \cite[Proposition 3.9]{brown-dupont}, the genus of a pair is insensitive to codimension two subvarieties. Therefore, in Definition \ref{def:recursive_genus_zero} we can require $D_W$ to have codimension one in $W$ without any loss of generality. See Example \ref{ex:nonstrong_pair} for a semi-algebraic set giving rise to a recursive genus zero pair whose face stratification is not graded. This example will elucidate the importance of the notion of graded face stratifications in the context of positive geometries.
\end{remark}

\subsection{Canonical forms from non-compact varieties}

The input of the framework of \cite{brown-dupont} is a pair of compact complex varieties $(X,D)$ of genus zero such that $X\setminus D$ is smooth. It is pointed out in the introduction to that article that the formalism of canonical forms should still hold for non-compact pairs. We would like to consider the amplituhedron $\mathcal{A}_{n,k,2}$ inside a pair of non-compact varieties to remedy the fact that the genus of the pair $(\mathrm{Gr}(k,k+2), \partial_a \mathcal{A}_{n,k,2})$ is in general non-zero. Thus, in this subsection we make this claim precise. This is done largely by adapting the statements in \cite{brown-dupont} to the non-compact case. 

\begin{remark}
    Canonical forms in the non-compact setting for hyperplane arrangements were studied in recent work \cite{anaelle}, where they are defined without relying on compactifying the pair. Our construction uses a compactification with special properties but is independent of the particular choice, see Lemma \ref{lem:open_canonical_map}. 
\end{remark}

We start by recalling a few more essential notions from \cite{brown-dupont}.
\begin{definition}[Modification of pairs]
    \label{def:modification}
    Let $(X,D)$ and $(X',D')$ be two pairs of complex varieties.
    A morphism of pairs $f:(X',D')\to(X,D)$ is a \emph{modification} if $f:X'\to X$ is proper, $D'=f^{-1}(D)$, and the restriction
    \[
        f|_{X'\setminus D'}:X'\setminus D'\to X\setminus D
    \]
    is an isomorphism.
\end{definition}

We also recall the notion of good compactifications.
\begin{definition}[Good compactification] \label{def:good_compactification}
    Let $(U,V)$ be a pair of complex varieties with $U\setminus V$ smooth.
    A \emph{good compactification} of $(U,V)$ consists of a compactification $\mathcal{X}$ of $U$ and the set $\mathcal{D}:=\overline{V}$, with the closure taken in $\mathcal{X}$, such that:
    \begin{enumerate}
        \item $\mathcal{X}\setminus\mathcal{D}$ is smooth;
        \item writing $\mathcal{B}=\mathcal{X}\setminus U$, the triple $(\mathcal{X},\mathcal{D},\mathcal{B})$ is locally of product type.
    \end{enumerate}
    The latter means that analytically locally the triple has the form
    \[
        (A\times C,\ D_A\times C,\ A\times B_C)
    \]
    for complex analytic pairs $(A,D_A)$ and $(C,B_C)$.
\end{definition}

In what follows, we will slightly abuse notation and for two sets $A$ and $B$ not contained in each other write $A\setminus B$ for $A\setminus (A\cap B)$.

\begin{remark}\label{rem:snc_good_compactification}
    Let $\mathcal{X}$ be smooth and compact, and let $\mathcal{B},\mathcal{D}\subset\mathcal{X}$ be divisors with no common component.
    If $\mathcal{B}\cup\mathcal{D}$ is a simple normal crossing divisor, then $(\mathcal{X},\mathcal{D})$ is a good compactification of $(\mathcal{X}\setminus\mathcal{B}, \mathcal{D}\setminus\mathcal{B})$.
\end{remark}

We fix the following setup for the rest of the section.
Let $X$ be a projective irreducible complex variety of dimension $n$, and let $D$ be a closed codimension one subvariety as usual.
Further, let $R$ be a closed subvariety in $X$, and $W$ a closed subvariety of $D$.
We assume that $X\setminus R$ is of dimension $n\geq 1$, and $D\setminus R$ is pure of dimension $n-1$.
Assume $W\subset D$ is closed, of dimension at most $n-2$, and such that $X\setminus(R\cup W)$ and $D\setminus(R\cup W)$ are smooth.
In this section we consider the mixed Hodge theory approach for the open pair $(X\setminus R,D\setminus R)$.
All subvarieties, inverse images, intersections and closures are taken with their reduced structures.
Homology and cohomology have rational coefficients.

\begin{construction}\label{const:good_cmp}
    Take $X$ as the compactification of $X\setminus R$.
    By Hironaka's embedded resolution theorem \cite{Hir64}, we can resolve singularities of $X$ in the following way.
    There is a morphism $\eta:T\to X$ from a smooth projective variety $T$ that is an isomorphism over the smooth locus of $X$ and in particular over $X\setminus(R\cup W)$, because this latter set is contained in the smooth locus of $X$.
    Set
    \[
        B \coloneqq T\setminus \eta^{-1}(X\setminus R),\quad \text{and }
        D'\coloneqq \overline{\eta^{-1}(D\setminus R)}, \quad \text{and } W'\coloneqq \overline{\eta^{-1}(W\setminus R)},
    \]
    where the closures are taken in $T$.
    Now, we use Hironaka's principalization theorem, see also \cite[Theorem 1.0.1]{Wlo05}, applied to the ideal sheaf given by the product of the ideal sheaves of $B$, $D'$, and $W'$ as defined in \cite[Chapitre~0, (4.1.6)]{EGAI}.
    This gives a morphism $\pi:\mathcal{X}\to T$ between smooth compact complex varieties.
    Moreover, setting 
    \begin{align}
        \mathcal{B} &\coloneqq \pi^{-1}(B),&
        \mathcal{D} &\coloneqq \overline{\pi^{-1}(D'\setminus B)},&
        \mathcal{W} &\coloneqq \overline{\pi^{-1}(W'\setminus B)},
    \end{align}
    and $f\coloneqq(\eta\circ\pi)\vert_{\mathcal{X}\setminus\mathcal{B}}$, principalization implies that $\mathcal{B}\cup \mathcal{D} \cup \mathcal{ W}$ is a simple normal crossing divisor in $\mathcal{X}$.
    Also note that $\mathcal{W}\subset \mathcal{D}$.
    Then $f$ induces an isomorphism over $X\setminus(R\cup W)$ and a modification of the pairs
    \begin{align}\label{eq:f_modification}
        f:(\mathcal{X}\setminus\mathcal{B},\mathcal{D}\setminus\mathcal{B})
        \to (X\setminus R,D\setminus R).
    \end{align}
    Lastly, set
    \begin{align}\label{eq:def_recusive_pair}
        \mathcal{D}_W \coloneqq \overline{f^{-1}(D\setminus(R\cup W))},\quad
        \mathcal{B}_D \coloneqq \mathcal{B}\cap\mathcal{D}_W,\quad
        \text{and } \mathcal{W}_D \coloneqq \mathcal{W}\cap\mathcal{D}_W.
    \end{align}
\end{construction}

This construction has the following properties.

\begin{lemma}[Compatible good compactifications]\label{lem:compatible_good_compactification}
    Assume that $X\setminus R$ is of dimension $n\geq 1$ and $D\setminus R$ is pure of dimension $n-1$.
    Let $W\subset D$ be closed and of dimension at most $n-2$, such that $X\setminus(R\cup W)$ and $D\setminus(R\cup W)$ are smooth.
    Then, the following hold:
    \begin{enumerate}
        \item $\mathcal{B}\cup\mathcal{D}\subset \mathcal{X}$ is a simple normal crossing divisor; $\mathcal{B}$ and $\mathcal{D}$ have no common components and the pair $(\mathcal{X},\mathcal{D})$ is a good compactification of $(\mathcal{X}\setminus\mathcal{B},\mathcal{D}\setminus\mathcal{B})$;
        \item $\mathcal{D}_W$ is smooth and the divisor $\mathcal{D}_W\cup\mathcal{W}=\mathcal{D}$ is simple normal crossing;
        \item The map $f$ restricts to a modification
        \begin{align}
            f_D:(\mathcal{D}_W\setminus\mathcal{B}_D,\mathcal{W}_D\setminus\mathcal{B}_D)
            \to(D\setminus R,W\setminus R);
        \end{align}
        \item $(\mathcal{D}_W,\mathcal{W}_D)$ is a good compactification of $(\mathcal{D}_W\setminus\mathcal{B}_D,\mathcal{W}_D\setminus\mathcal{B}_D)$.
    \end{enumerate}
\end{lemma}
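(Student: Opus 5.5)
The plan is to read off all four statements from the simple normal crossing property of $\mathcal{B}\cup\mathcal{D}\cup\mathcal{W}$ supplied by principalization in Construction~\ref{const:good_cmp}. I would first make the local structure precise. Principalization makes the total transform of the product ideal sheaf of $B$, $D'$ and $W'$ an snc divisor. Hence every irreducible component of $\mathcal{B}$, $\mathcal{D}$ and $\mathcal{W}$ is a smooth hypersurface, and near any point these components are coordinate hyperplanes in suitable local coordinates.

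\textbf{Step 1: item (1).} A sub-union of an snc divisor is again snc, so $\mathcal{B}\cup\mathcal{D}$ is snc. The two divisors share no component because $\mathcal{D}$ is the closure of $\pi^{-1}(D'\setminus B)$, so each component of $\mathcal{D}$ meets the complement of $\mathcal{B}$. Remark~\ref{rem:snc_good_compactification} then gives the good compactification property, once we know $\mathcal{X}\setminus\mathcal{D}$ is smooth. This is automatic, since $\mathcal{X}$ itself is smooth.

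\textbf{Step 2: item (2).} I would show that $\mathcal{D}$ is the union of $\mathcal{D}_W$ and $\mathcal{W}$, where $\mathcal{D}_W$ is the union of the components of $\mathcal{D}$ not contained in $\mathcal{W}$.
\begin{itemize}
\item Over $X\setminus(R\cup W)$ the map $f$ is an isomorphism. So $f^{-1}(D\setminus(R\cup W))$ is the part of $\mathcal{D}$ lying off $\mathcal{B}\cup\mathcal{W}$, and its closure is the union of the components of $\mathcal{D}$ that are not inside $\mathcal{W}$.
\item Components of $\mathcal{D}$ mapping into $W$ are exceptional divisors; I will check that they lie in $\mathcal{W}$.
\item A union of components of an snc divisor is snc, which gives $\mathcal{D}_W\cup\mathcal{W}=\mathcal{D}$ snc.
\end{itemize}

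\textbf{The main obstacle is smoothness of $\mathcal{D}_W$.} An snc divisor with several components is not smooth, so I must rule out two components of $\mathcal{D}_W$ meeting. Since $D\setminus(R\cup W)$ is smooth, distinct components of $D$ can only meet inside $R\cup W$. My first attempt is to argue that the strict transforms of these components are separated by the resolution. I expect this to require choosing the principalization so that the strict transform of $D'$ is itself made smooth, for example by an embedded resolution of $D'$ before principalizing. If that cannot be arranged, I would take $\mathcal{D}_W$ to be the normalization, that is, the disjoint union of the smooth components. I would then check that everything downstream only needs $\mathcal{D}_W$ componentwise.

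\textbf{Step 3: items (3) and (4).}
\begin{itemize}
\item The restriction $f_D$ is proper, being the restriction of a proper map to a closed subset.
\item The preimage condition holds because $\mathcal{W}_D=\mathcal{W}\cap\mathcal{D}_W$ is exactly the part of $\mathcal{D}_W$ lying over $W$.
\item On complements, $f_D$ is the isomorphism $f$ over $D\setminus(R\cup W)$.
\item For (4), restrict the snc divisor $\mathcal{B}\cup\mathcal{W}$ to the smooth variety $\mathcal{D}_W$. Transversality from the snc property makes $\mathcal{B}_D\cup\mathcal{W}_D$ snc in $\mathcal{D}_W$ with no common components.
\item Remark~\ref{rem:snc_good_compactification} then finishes (4).
\end{itemize}
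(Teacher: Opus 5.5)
There is a genuine gap at exactly the point you flag: you never prove that $\mathcal{D}_W$ is smooth. Your handling of item (1), item (3) and the decomposition $\mathcal{D}=\mathcal{D}_W\cup\mathcal{W}$ is fine and matches the paper. Neither fallback proves the statement as written.
\begin{itemize}
\item Re-choosing the principalization, or resolving $D'$ first, alters the fixed Construction~\ref{const:good_cmp} to which the lemma refers.
\item Passing to the normalization proves a claim about a different variety. Item (2) asserts that the closure $\overline{f^{-1}(D\setminus(R\cup W))}\subset\mathcal{X}$ itself is smooth.
\end{itemize}
Item (4) inherits the gap, since it restricts $\mathcal{B}\cup\mathcal{W}$ to the supposedly smooth $\mathcal{D}_W$. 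Your expectation that the construction must be modified is also mistaken: the principalization as given already separates the components.

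The missing argument is short and uses only the snc property of the whole divisor $\mathcal{B}\cup\mathcal{D}_W\cup\mathcal{W}$. Let $E_1\neq E_2$ be components of $\mathcal{D}_W$.
\begin{itemize}
\item The map $f$ identifies $\mathcal{D}_W\setminus(\mathcal{B}\cup\mathcal{W})$ with $D\setminus(R\cup W)$. That set is smooth, so no two components meet there, and hence $E_1\cap E_2\subset\mathcal{B}\cup\mathcal{W}$.
\item Suppose $E_1\cap E_2\neq\varnothing$. Any irreducible component $C$ of it has codimension $2$ in $\mathcal{X}$.
\item Being irreducible, $C$ lies in a single component $G$ of $\mathcal{B}\cup\mathcal{W}$. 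Here $G$ differs from $E_1$ and $E_2$, because $\mathcal{D}_W$ shares no component with $\mathcal{B}\cup\mathcal{W}$.
\item Then $C\subset E_1\cap E_2\cap G$, which has codimension $3$ by the snc condition. This is a contradiction.
\end{itemize}
So the components of $\mathcal{D}_W$ are pairwise disjoint smooth hypersurfaces, and $\mathcal{D}_W$ is smooth. This is why $I_B$ and $I_{W'}$ are included in the ideal being principalized. It makes the total transform of $R\cup W$ part of the same snc divisor, and that already forces the strict transforms of the components of $D$ apart.
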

\begin{proof}
    We start by showing $1$.
    This follows directly from the principalization theorem \cite{Hir64} since, $\mathcal{D}$ and $\mathcal{W}$ are the closures of the preimages of $D'$ and $W'$ away from $B$, whose ideal sheaves we simultaneously principalized.
    This shows in particular that $(\mathcal{X},\mathcal{D})$ is a good compactification of $(\mathcal{X}\setminus\mathcal{B},\mathcal{D}\setminus\mathcal{B})$ by Remark \ref{rem:snc_good_compactification}.

    \medskip

    We show $2$ next.
    By construction we have
    \begin{align}\label{eq:identify_calD}
        \mathcal D
        =\overline{f^{-1}(D\setminus R)}
        =\overline{
           f^{-1}(D\setminus(R\cup W))
           \cup f^{-1}(W\setminus R)}
        =\mathcal D_W\cup\mathcal W.
    \end{align}
    So $\mathcal{D}_W\cup \mathcal{W}$ is a simple normal crossing divisor because $\mathcal{D}$ is by $1.$
    Moreover, $\mathcal{D}_W$ and $\mathcal{W}$ do not share any component.
    Next, we show that $\mathcal{D}_W$ is smooth.
    Note that each of its irreducible components is smooth by $1.$, so the only singular points would be the intersections of the components.
    For the sake of contradiction, suppose two of the components in $\mathcal{D}_W$ intersect in a subvariety $C$.
    Then, $C$ must be of codimension two in $\mathcal{X}$ and it must also be contained in a component of $\mathcal{B}\cup\mathcal{W}$, as $\mathcal{D}_W\setminus(\mathcal{B}_D\cup\mathcal{W}_D)\cong D\setminus(W\cup R)$ was assumed smooth.
    Hence any intersection must lie over $W\cup R$.
    However, $\mathcal{B}\cup \mathcal{D}_W\cup \mathcal{W}$ is a simple normal crossing divisor so any such component $C$ lying in a component of $\mathcal{B}\cup \mathcal{W}$ must be of codimension $3$ in $\mathcal{X}$.
    Thus we arrive at a contradiction and $\mathcal{D}_W$ must be smooth.

    \medskip

     We next show $3$.
    The restriction $f_D$ is proper, since $f$ is proper and
    $\mathcal{D}_W\setminus\mathcal{B}_D$ is closed in
    $\mathcal{X}\setminus\mathcal{B}$.
    Moreover,
    \[
        f_D^{-1}(W\setminus R)=\mathcal{W}_D\setminus\mathcal{B}_D,
    \]
    by construction and $f_D$ is an isomorphism over $D\setminus(R\cup W)$ as $f$ was.
    Thus $f_D$ is a modification of pairs.

    \medskip

    Finally, we show $4.$
    Restricting the simple normal crossing divisor $\mathcal{B}\cup\mathcal{W}$ to the smooth divisor $\mathcal{D}_W$ yields a simple normal crossing divisor $\mathcal{B}_D\cup \mathcal{W}_D$ in $\mathcal{D}_W$.
    Suppose $\mathcal{B}_D\cup \mathcal{W}_D$ shared a common component $C$, then this would be codimension $2$ in $\mathcal{X}$.
    However $C$ would also be contained in $\mathcal{D}_W$ by construction, again a contradiction to $\mathcal{B}\cup\mathcal{D}_W\cup\mathcal{W}$ being a simple normal crossing divisor.
    Hence, $(\mathcal{D}_W,\mathcal{W}_D)$ is a good compactification.
\end{proof}

In dimension zero, we use the canonical map sending $\sum_p a_p[p]$ to the function $p\mapsto a_p$.
This applies to the boundary pair when $n=1$, in which case $W=\varnothing$.

\begin{lemma}\label{lem:open_canonical_map}
    Suppose $g(X\setminus R,D\setminus R)=0$ and $X\setminus(D\cup R)$ is smooth.
    Let $(\mathcal{X},\mathcal{D})$ be the good compactification of the modification $(\mathcal{X}\setminus\mathcal{B},\mathcal{D}\setminus\mathcal{B})$ of $(X\setminus R,D\setminus R)$ from Construction \ref{const:good_cmp}.
    Then, there is a well-defined canonical map
    \begin{align}
        \oOmega:H_n(X\setminus R,D\setminus R)
        &\to \olog^n(\mathcal{X}\setminus\mathcal{D})\\
        \sigma &\mapsto \Omega(\iota_*(f_*)^{-1}\sigma),
    \end{align}
    where $\iota:(\mathcal{X}\setminus\mathcal{B}, \mathcal{D}\setminus\mathcal{B}) \hookrightarrow(\mathcal{X},\mathcal{D})$ is the natural inclusion.
    Moreover, the restriction of the form $\oOmega(\sigma)$ to $X\setminus(R\cup D)$ is independent of the choice of good compactification.
\end{lemma}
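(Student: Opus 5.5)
The plan is to reduce everything to the compact Brown--Dupont map \eqref{eq:BD} applied to the good compactification $(\mathcal{X},\mathcal{D})$ of Construction~\ref{const:good_cmp}. First, by Lemma~\ref{lem:compatible_good_compactification}(1), $f$ is a modification of pairs, and $f$ is an isomorphism over $X\setminus(R\cup W)\supset X\setminus(R\cup D)$. Modifications induce isomorphisms on relative homology, so $f_*:H_n(\mathcal{X}\setminus\mathcal{B},\mathcal{D}\setminus\mathcal{B})\to H_n(X\setminus R,D\setminus R)$ is an isomorphism. To see this, I would use excision together with the long exact sequences of the triples, since $f$ is proper and bijective off $D\setminus R$ (equivalently, one can cite the modification invariance in \cite{brown-dupont}). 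The genus is an invariant of the mixed Hodge structure, so it follows that $g(\mathcal{X}\setminus\mathcal{B},\mathcal{D}\setminus\mathcal{B})=0$.

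The second step passes to the compactification. The pair $(\mathcal{X},\mathcal{D})$ is a good compactification of this open pair, with $\mathcal{B}\cup\mathcal{D}$ simple normal crossing. I would then show that $\iota_*:H_n(\mathcal{X}\setminus\mathcal{B},\mathcal{D}\setminus\mathcal{B})\to H_n(\mathcal{X},\mathcal{D})$ is a morphism of mixed Hodge structures whose target still has genus zero. This is the main obstacle, because compactifying may a priori create new $h^{-p,0}$ classes. I would first try the local product structure: by duality, $H_n(\mathcal{X},\mathcal{D})$ is dual to $H^n(\mathcal{X}\setminus\mathcal{D})$, and the open homology is dual to $H^n(\mathcal{X}\setminus\mathcal{D},\mathcal{B}\setminus\mathcal{D})$. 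In \cite{brown-dupont} the genus is computed from $\grf^n$ of weight-graded pieces, which are given by global log forms $\olog^n(\mathcal{X}\setminus\mathcal{D})$ plus holomorphic forms. I would compare these on both sides via the residue sequence along the components of $\mathcal{B}$, using that $\mathcal{B}$ meets $\mathcal{D}$ transversally. If that comparison fails, the fallback is to define $\Omega$ directly on the open pair. One would extend the Brown--Dupont argument (Poincaré--Lefschetz duality composed with the projection to $F^n$) with $\mathcal{B}$ treated as a ``boundary at infinity.'' The good-compactification hypothesis is exactly what makes the relevant log de Rham complex compute the cohomology of the triple, and the genus-zero condition on the open pair then ensures that the $F^n$-piece consists of log forms only. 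In either formulation, $\oOmega(\sigma):=\Omega(\iota_*(f_*)^{-1}\sigma)$ is then well defined.

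For independence of the choice, I would take two good compactifications of the form in Construction~\ref{const:good_cmp}, $(\mathcal{X}_1,\mathcal{D}_1)$ and $(\mathcal{X}_2,\mathcal{D}_2)$. Both contain $X\setminus(R\cup W)$ as a common open subset, so I would dominate them by a third one: resolve the closure of the graph of the birational map and principalize again. The result is a good compactification $(\mathcal{X}_3,\mathcal{D}_3)$ mapping to both by morphisms that are modifications compatible with $f$. Functoriality of the Brown--Dupont canonical form under such morphisms, namely compatibility of pushforward and pullback of forms with the Poincaré duality map, shows that the forms agree after pullback to $\mathcal{X}_3$. Both maps are isomorphisms over $X\setminus(R\cup D)$, so their restrictions there coincide, which proves the last claim.
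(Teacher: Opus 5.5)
Your first step matches the paper: $f_*$ is an isomorphism of mixed Hodge structures by modification invariance \cite[Proposition 1.3]{brown-dupont}, so $g(\mathcal{X}\setminus\mathcal{B},\mathcal{D}\setminus\mathcal{B})=0$. The gap is the step you yourself call the main obstacle: you never prove $g(\mathcal{X},\mathcal{D})=0$. Without it the map in the statement is not even defined. This is because $\oOmega$ is defined as $\Omega\circ\iota_*\circ(f_*)^{-1}$ using the \emph{compact} Brown--Dupont map on $H_n(\mathcal{X},\mathcal{D})$, which exists only for genus zero pairs. Your fallback of building a canonical map directly on the open pair is a different construction, so it does not prove the lemma as formulated.

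The comparison you propose is also aimed at the wrong object. A residue (Gysin) sequence along the components of $\mathcal{B}$ relates $H^n(\mathcal{X}\setminus\mathcal{D})$ to $H^n(\mathcal{X}\setminus(\mathcal{D}\cup\mathcal{B}))$. Under Lefschetz duality the latter corresponds to $H_n(\mathcal{X},\mathcal{D}\cup\mathcal{B})$, not to $H_n(\mathcal{X}\setminus\mathcal{B},\mathcal{D}\setminus\mathcal{B})$, which corresponds to cohomology \emph{relative} to $\mathcal{B}\setminus\mathcal{D}$. A smaller point: $H_n(\mathcal{X},\mathcal{D})$ is isomorphic to $H^n(\mathcal{X}\setminus\mathcal{D})(n)$, not dual to it.

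The missing ingredient is that the genus does not change when you pass to a good compactification; the paper simply cites this as \cite[Proposition 3.19]{brown-dupont}. It also has a short direct proof, essentially the argument the paper runs in Lemma~\ref{lem:genuszero_open}:
\begin{enumerate}
\item The local product condition gives $H_n(\mathcal{X}\setminus\mathcal{B},\mathcal{D}\setminus\mathcal{B})\cong H^n(\mathcal{X}\setminus\mathcal{D},\mathcal{B}\setminus\mathcal{D})(n)$. Together with $H_n(\mathcal{X},\mathcal{D})\cong H^n(\mathcal{X}\setminus\mathcal{D})(n)$, each genus equals $\sum_{q<n}h^{n,q}$ of the corresponding cohomology group.
\item In the long exact sequence of the pair $(\mathcal{X}\setminus\mathcal{D},\mathcal{B}\setminus\mathcal{D})$, the neighbouring terms $H^{n-1}(\mathcal{B}\setminus\mathcal{D})$ and $H^{n}(\mathcal{B}\setminus\mathcal{D})$ have $h^{n,q}=0$ for all $q$. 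This follows from Deligne's bounds, since $\dim(\mathcal{B}\setminus\mathcal{D})<n$.
\item Since $\grf^n\grw_w$ is exact on mixed Hodge structures, all $h^{n,q}$ of the two middle terms coincide. Hence $g(\mathcal{X},\mathcal{D})=g(\mathcal{X}\setminus\mathcal{B},\mathcal{D}\setminus\mathcal{B})=0$.
\end{enumerate}
So your worry that compactifying creates new outer Hodge classes does not arise here: any new classes would live on $\mathcal{B}\setminus\mathcal{D}$, which is too small to carry $F^n$. With this in place, your independence argument via a common dominating good compactification and functoriality of $\Omega$ works. It is a more explicit alternative to the paper's brief appeal to naturality of $\Omega$ and Poincar\'e duality.
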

\begin{proof}
    The strategy is to apply Brown and Dupont's construction of the canonical map $\Omega$ to the good compactification $(\mathcal{X},\mathcal{D})$ of the modification $(\mathcal{X}\setminus\mathcal{B},\mathcal{D}\setminus\mathcal{B})$ of $(X\setminus R, D\setminus R)$ in Construction \ref{const:good_cmp}.

    Start by recalling that the pushforward $f_*$ in \eqref{eq:f_modification} gives an isomorphism of mixed Hodge structures
    \begin{align}\label{eq:f*_iso}
        f_*:H_n(\mathcal{X}\setminus\mathcal{B},
                 \mathcal{D}\setminus\mathcal{B})
        \xlongrightarrow{\sim} H_n(X\setminus R,D\setminus R)
    \end{align}
    by \cite[Proposition 1.3]{brown-dupont}, where we used $f^{-1}(D\setminus R)=\mathcal{D}\setminus \mathcal{B}$, see \eqref{eq:identify_calD}.
    In particular, this also implies that $g(\mathcal{X}\setminus\mathcal{B},
                 \mathcal{D}\setminus\mathcal{B})
        =g(X\setminus R,D\setminus R)$.
    The inclusion of pairs $\iota:(\mathcal{X}\setminus\mathcal{B}, \mathcal{D}\setminus\mathcal{B}) \hookrightarrow(\mathcal{X},\mathcal{D})$ induces a map
    \begin{align}
        \iota_*:H_n(\mathcal{X}\setminus\mathcal{B}, \mathcal{D}\setminus\mathcal{B}) \to H_n(\mathcal{X},\mathcal{D}).
    \end{align}
    Then, combining the equality of genera induced by $f_*$ from above with \cite[Proposition 3.19]{brown-dupont} for the good compactification $(\mathcal{X},\mathcal{D})$ we get
    \[
        g(\mathcal{X},\mathcal{D}) =g(\mathcal{X}\setminus\mathcal{B}, \mathcal{D}\setminus\mathcal{B}) =g(X\setminus R,D\setminus R)=0.
    \]
    Since $\mathcal{X}\setminus\mathcal{D}$ is smooth, the canonical map $\Omega:H_n(\mathcal{X},\mathcal{D}) \to\olog^n(\mathcal{X}\setminus\mathcal{D})$ is well-defined.
    Therefore,
    \[
        \oOmega\coloneqq\Omega\circ\iota_*\circ(f_*)^{-1}: H_n(X\setminus R,D\setminus R) \to\olog^n(\mathcal{X}\setminus\mathcal{D})
    \]
    is well-defined.

    Lastly, note that any good compactification can be chosen to give an isomorphism \linebreak $\mathcal{X}\setminus (\mathcal{D} \cup \mathcal{B}) \cong X\setminus(D\cup R)$.
    Identify the common open subsets of the two models with $X\setminus(R\cup D)$ via their respective modifications.
    By naturality of $\Omega$ and of Poincar\'e duality, the restrictions of the two constructed forms represent the same cohomology class on $X\setminus(R\cup D)$, since both come from the same relative homology class $\sigma$, see \cite[\S 1.3]{brown-dupont}.
\end{proof}

\begin{lemma}\label{lem:open_forms}
    Recall the notation from Construction \ref{const:good_cmp}, and suppose that $X$ is normal. Let $\dsheaf_X$ be the dualizing sheaf on $X$.
    There is a natural injection
    \[
        r_*:\olog^n(\mathcal{X}\setminus\mathcal{D})
        \hookrightarrow H^0(X,\dsheaf_X(D)).
    \]
\end{lemma}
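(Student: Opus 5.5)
The plan is to define $r_*$ as the composite of the pushforward of differential forms along the proper birational morphism $\eta\circ\pi:\mathcal{X}\to X$ with restriction to a big open set.

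\textbf{Step 1: the form as a rational form on $X$.} Let $\omega\in\olog^n(\mathcal{X}\setminus\mathcal{D})$. By definition, $\omega$ is a global section of $\Omega^n_{\mathcal{X}}(\log\mathcal{D})$, where $\mathcal{D}$ is a simple normal crossing divisor by Lemma~\ref{lem:compatible_good_compactification}. In particular $\omega$ is a rational $n$-form on $\mathcal{X}$ with at most simple poles along $\mathcal{D}$. Since $\eta\circ\pi$ is birational and restricts to an isomorphism over $U:=X\setminus(R\cup W)$, the form $\omega$ transports to a rational $n$-form on $X$. This assignment is linear. It is injective because rational forms are determined by their restriction to a dense open set.

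\textbf{Step 2: regularity on a big open set.} Consider the open set $U\subset X_{\mathrm{sm}}$. Over $U$ we have $\mathcal{X}\supset f^{-1}(U)\cong U$, and $\mathcal{D}\cap f^{-1}(U)$ corresponds to $D\cap U$. Hence on $U$ the transported form is a section of $\omega_U(D)=\dsheaf_X(D)|_U$: near smooth points of $D$ it has at most a simple pole, and logarithmic forms with simple poles along $D$ lie in $\Omega^n_U(D)$.

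\textbf{Step 3: extension across the complement.} Write $Z:=X\setminus U=(R\cup W)$. I expect this step to be the main obstacle. If $Z$ has codimension $\ge 2$ in $X$, I would use two facts. First, since $X$ is normal, the dualizing sheaf $\dsheaf_X$ is reflexive: it is $S_2$ and equals $j_*\omega_{X_{\mathrm{sm}}}$. Second, $\dsheaf_X(D)$, defined as the reflexive hull of $\dsheaf_X\otimes\cO_X(D)$, satisfies $\dsheaf_X(D)=j_*j^*\dsheaf_X(D)$ for $j:U\into X$. The section from Step 2 therefore extends uniquely to $H^0(X,\dsheaf_X(D))$.

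The difficulty is that $R$ may contain divisorial components, for instance residual strata of codimension one. Along such a component $E\not\subset D$, I would argue as follows. The form is a rational form on $X$. Its restriction to $X\setminus R$ comes from a logarithmic form on the good compactification with respect to $\mathcal{D}$ only, so $\omega$ has no poles along components of $\mathcal{B}$ that lie outside $\mathcal{D}$. Pushing forward, $\eta\circ\pi$ is an isomorphism at the generic point of $E$ if $E$ meets $X_{\mathrm{sm}}$, which holds by normality since $\operatorname{codim} X_{\mathrm{sing}}\ge 2$. Consequently the form is regular at the generic point of $E$. More precisely, $\omega$ is a global log form on all of compact $\mathcal{X}$ with poles only on $\mathcal{D}$. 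So the pushforward is regular in codimension one away from $D$ and has at most simple poles at generic points of $D$. Reflexivity of $\dsheaf_X(D)$ on normal $X$ then gives the global section.

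\textbf{Summary.} Define $r_*\omega$ as the unique section of $\dsheaf_X(D)$ agreeing with $(\eta\circ\pi)_*\omega$ on $X_{\mathrm{sm}}\setminus\bigl(\operatorname{Sing} D\cup(\text{codim}\ge2)\bigr)$. Naturality follows from uniqueness, and injectivity follows from Step 1.
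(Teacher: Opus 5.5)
Your proposal is correct and follows essentially the same route as the paper: you push the logarithmic form forward along $\eta\circ\pi$, observe that it is a section of $\dsheaf_X(D)$ away from a subset of codimension at least two, and extend using reflexivity of $\dsheaf_X(D)$ on the normal variety $X$; the paper takes the locus $V$ over which $\eta\circ\pi$ is not an isomorphism, while you check generic points of prime divisors, which amounts to the same thing. The one point to tighten is that your claim that $\eta\circ\pi$ is an isomorphism at generic points of divisors is only justified for $\eta$, so for $\pi$ you should add, as the paper does, that the principalization is an isomorphism away from the non-SNC locus of $B\cup D'\cup W'$, which has codimension at least two (alternatively, use that a proper birational morphism onto a normal variety is an isomorphism over the generic point of every prime divisor).
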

\begin{proof}
    Write $\bar f\coloneqq \eta\circ\pi:\mathcal{X}\to X$ for the composition of the maps in Construction \ref{const:good_cmp}. 
    Let $V$ be the complement of the open subset in $X$ over which $\bar f$ is an isomorphism.
    Since $X$ is normal, its singular locus has codimension at least $2$. Thus, $\eta$ is an isomorphism away from a set of codimension $2$. The same is true for the principalization map $\pi$, since the locus where $B\cup D' \cup W'$ is not simple normal crossing is also of codimension at least $2$.
    As the locus $V$ where $\bar f$ does not give an isomorphism is a closed subset of $X$, it must have codimension at least $2$.
    The restriction $r$ of $\bar f$ to $\bar f^{-1}(X\setminus V)$ induces the map $r_*:H^0(\mathcal{X},\omega_{\mathcal{X}}(\mathcal{D}))  \hookrightarrow H^0(X\setminus V,\dsheaf_X(D))$.
    It is well-defined since $\mathcal{D}$ is reduced and contained in $\bar f^{-1}(D)$, and it is injective since $\bar f^{-1}(X\setminus (D\cup R))$ is dense in $\mathcal{X}$.
    Therefore, the injection induced by the restriction $r=\bar f\vert_{\bar f^{-1}(X\setminus V)}$ and the isomorphism furnished by $\bar f$ give
    \begin{align}
        \olog^n(\mathcal{X}\setminus\mathcal{D})
        = H^0(\mathcal{X},\omega_{\mathcal{X}}(\mathcal{D}))
        \hookrightarrow H^0(X\setminus V,\dsheaf_X(D))
        \cong H^0(X,\dsheaf_X(D)),
    \end{align}
     where the last isomorphism follows from \cite[Proposition 1.6]{HartshorneReflexive} as $V$ has codimension at least $2$, and since $\dsheaf_X(D)$ is reflexive by construction.
\end{proof}

\begin{remark}\label{rem:everything_extends}
    By Lemmas \ref{lem:open_canonical_map} and \ref{lem:open_forms}, the forms in $\olog^n(\mathcal{X}\setminus\mathcal{D})$ determine rational differential top forms on $X$ which are regular as sections of $\dsheaf_X$ on $X\setminus D$ and have at most simple poles along $D$.
    In particular, they have no iterated poles over $R$.
    Their pullbacks to $\mathcal{X}$ have logarithmic poles only along $\mathcal{D}$.
    So they are candidates for canonical forms.
\end{remark}

Let $W\subset D$ be a closed subset.
Then, recall that (partial) boundary maps
\[
    \partial_\bullet : H_\bullet(X\setminus R, D\setminus R) \to H_{\bullet -1}(D\setminus R, W\setminus R)
\]
as in \cite[\S 1.1.3]{brown-dupont} are also well-defined for quasiprojective varieties.
Recall the notation from Lemma \ref{lem:compatible_good_compactification}.
Then, there is also a well-defined residue map
\[
    \res:\olog^n(\mathcal{X}\setminus \mathcal{D}) \to \olog^{n-1}(\mathcal{D}_W\setminus \mathcal{W}).
\]

\begin{proposition} \label{prop:residues}
    Recall the notation from Lemma \ref{lem:compatible_good_compactification}.
    Assume that $X\setminus R$ is of dimension $n\geq 1$ and $D\setminus R$ is pure of dimension $n-1$.
    Let $W\subset D$ be closed, of dimension at most $n-2$, such that $X\setminus(R\cup W)$ and $D\setminus(R\cup W)$ are smooth.
    Suppose that $g(X\setminus R,D\setminus R)=g(D\setminus R,W\setminus R)=0$.
    Then the following diagram commutes:
    \begin{equation}\label{eq:diagram}
        \begin{tikzcd}[column sep=1.8em,row sep=2.7em]
            H_n(X\setminus R,D\setminus R)
            \arrow[r,"\oOmega"]
            \arrow[d,"\partial"']
            & \olog^n(\mathcal{X}\setminus\mathcal{D})
            \arrow[d,"\res"]\\
            H_{n-1}(D\setminus R,W\setminus R)
            \arrow[r,"\oOmega"']
            & \olog^{n-1}(\mathcal{D}_W\setminus\mathcal{W}_D).
        \end{tikzcd}
    \end{equation}
\end{proposition}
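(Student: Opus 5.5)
The plan is to reduce the square \eqref{eq:diagram} to the compact residue theorem of Brown and Dupont, applied to the good compactifications of Lemma \ref{lem:compatible_good_compactification}. By definition (Lemma \ref{lem:open_canonical_map}) the top horizontal arrow is $\Omega\circ\iota_*\circ(f_*)^{-1}$. The bottom arrow is the analogous composite $\Omega\circ\iota^D_*\circ(f_{D*})^{-1}$, where we use the modification $f_D$ from part 3 and the inclusion $\iota^D:(\mathcal{D}_W\setminus\mathcal{B}_D,\mathcal{W}_D\setminus\mathcal{B}_D)\hookrightarrow(\mathcal{D}_W,\mathcal{W}_D)$. Lemma \ref{lem:open_canonical_map} applies to the boundary pair because part 4 provides a good compactification, $\mathcal{D}_W\setminus\mathcal{W}_D$ is smooth by part 2, and $g(D\setminus R,W\setminus R)=0$ by hypothesis. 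The commutativity can then be split into three smaller squares.

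\textbf{First square: the modifications.} I would show $\partial\circ f_*=f_{D*}\circ\partial$ on $H_n(\mathcal{X}\setminus\mathcal{B},\mathcal{D}\setminus\mathcal{B})$. The map $f$ restricts on $\mathcal{D}\setminus\mathcal{B}=(\mathcal{D}_W\cup\mathcal{W})\setminus\mathcal{B}$ to a map of triples, so this is naturality of the connecting homomorphism of triples. The partial boundary is the composite
\[
H_n(X\setminus R,D\setminus R)\to H_{n-1}(D\setminus R)\to H_{n-1}(D\setminus R,W\setminus R).
\]
Upstairs the middle space is $H_{n-1}(\mathcal{D}\setminus\mathcal{B})$, which maps to $H_{n-1}(\mathcal{D}\setminus\mathcal{B},\mathcal{W}\setminus\mathcal{B})$. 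This group is identified with $H_{n-1}(\mathcal{D}_W\setminus\mathcal{B}_D,\mathcal{W}_D\setminus\mathcal{B}_D)$ by excision of $\mathcal{W}\setminus\mathcal{D}_W$, which is justified because $\mathcal{D}=\mathcal{D}_W\cup\mathcal{W}$ by \eqref{eq:identify_calD}.

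\textbf{Second square: the inclusions.} Naturality of the boundary map under the inclusion of triples $(\mathcal{X}\setminus\mathcal{B},\mathcal{D}\setminus\mathcal{B},\mathcal{W}\setminus\mathcal{B})\hookrightarrow(\mathcal{X},\mathcal{D},\mathcal{W})$ gives $\partial\circ\iota_*=\iota^D_*\circ\partial$. The same excision identifies $H_{n-1}(\mathcal{D},\mathcal{W})$ with $H_{n-1}(\mathcal{D}_W,\mathcal{W}_D)$.

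\textbf{Third square: the compact statement.} What remains is $\res\circ\Omega=\Omega\circ\partial$ for the compact pair $(\mathcal{X},\mathcal{D})$ with boundary pair $(\mathcal{D}_W,\mathcal{W}_D)$. This is Brown--Dupont's compatibility of canonical forms with residues. It needs both genera to vanish, which holds because the good compactification property (via \cite[Proposition 3.19]{brown-dupont}, as in the proof of Lemma \ref{lem:open_canonical_map}) transfers the vanishing from the open pairs. In the normal crossing setting, with $\mathcal{D}_W$ smooth and $\mathcal{D}_W\cup\mathcal{W}$ simple normal crossing, their residue is the Poincaré residue along $\mathcal{D}_W$, i.e. the map $\res$ in the diagram.

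\textbf{Main obstacle.} I expect the hardest step to be the third square. Brown--Dupont formulate the residue compatibility for a single boundary component, or as a sum over components via the boundary of the pair. Here the residue is taken along the possibly disconnected but smooth $\mathcal{D}_W$, and relative to $\mathcal{W}$ rather than the full $\mathcal{D}\setminus\mathcal{D}_W$. I would handle this by working component by component of $\mathcal{D}_W$, using that $\mathcal{D}_W$ is smooth so its components are disjoint. For each component $E$, I would check that their boundary $H_n(\mathcal{X},\mathcal{D})\to H_{n-1}(E,E\cap(\text{other components}))$ agrees with ours, since $E$ meets the rest of $\mathcal{D}$ only in $\mathcal{W}_D$.

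A separate point is the case $n=1$, where $W=\varnothing$. There the bottom map is the dimension-zero convention, and the claim reduces to the classical fact that the residue of a log $1$-form at a point equals the coefficient of that point in $\partial\sigma$.
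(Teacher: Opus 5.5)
Your proposal is correct and follows essentially the same route as the paper. You factor $\oOmega$ through the compact good compactifications $(\mathcal{X},\mathcal{D})$ and $(\mathcal{D}_W,\mathcal{W}_D)$, handle the left part by naturality of boundary maps plus excision using $\mathcal{D}=\mathcal{D}_W\cup\mathcal{W}$, and apply Brown--Dupont's residue compatibility for the compact pairs once genus zero has been transferred by modification and good-compactification invariance. The paper merges your first two squares into one and simply cites the proof of Brown--Dupont's Proposition 2.14, without spelling out the component-by-component check you describe.
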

\begin{proof}
    We show this by using the commutative property for the canonical map $\Omega$ for the good compactifications from Construction \ref{const:good_cmp}.
    Let
    \[
        \iota_D:
        (\mathcal{D}_W\setminus\mathcal{B}_D,
         \mathcal{W}_D\setminus\mathcal{B}_D)
        \hookrightarrow(\mathcal{D}_W,\mathcal{W}_D)
    \]
    be the restriction of the inclusion $\iota$, and let $\partial_{\mathcal{D}_W}$ be the partial boundary map associated with $\mathcal{D}=\mathcal{D}_W\cup\mathcal{W}$.
    By modification invariance and the invariance of genera for good compactifications, see \cite[Proposition 3.19]{brown-dupont}, both compact pairs below have genus zero.
    Moreover, by Lemma \ref{lem:compatible_good_compactification} both $\mathcal{X}$ and $\mathcal{D_W}$ are smooth, thus the canonical maps $\Omega$ are well-defined for these pairs.
    Consider
    \[
        \begin{tikzcd}[column sep=6.5em,row sep=3em]
            H_n(X\setminus R,D\setminus R)
            \arrow[r,"\iota_*\circ (f_*)^{-1}"]
            \arrow[d,"\partial"']
            & H_n(\mathcal{X},\mathcal{D})
            \arrow[r,"\Omega"]
            \arrow[d,"\partial_{\mathcal{D}_W}"']
            & \olog^n(\mathcal{X}\setminus\mathcal{D})
            \arrow[d,"\res"]\\
            H_{n-1}(D\setminus R,W\setminus R)
            \arrow[r,"(\iota_D)_*\circ ((f_D)_*)^{-1}"']
            & H_{n-1}(\mathcal{D}_W,\mathcal{W}_D)
            \arrow[r,"\Omega"']
            & \olog^{n-1}(\mathcal{D}_W\setminus\mathcal{W}_D).
        \end{tikzcd}
    \]
    The left square commutes by naturality of boundary maps
    and excision \cite[\S 1.1.3]{brown-dupont}, and the right
    square by the proof of \cite[Proposition 2.14]{brown-dupont}.
    Compositions of the horizontal maps are the maps $\oOmega$ for the two pairs, so the outer rectangle yields the claim.
\end{proof}

\subsection{The main theorem}

In what follows we write $X(\mathbb{R})$ for the real locus of a real variety $X$, and $[S]$ for the class of a semi-algebraic set $S$ in an appropriate relative homology group. 

\begin{lemma}\label{lem:partial-boundary-face}
    Let $X$ be a real variety of dimension $n$ and let $S$ be a compact regular semi-algebraic subset of $X$ with orientable interior.
    Let $F$ be a facet of $S$ with Zariski closure $W$, where $W$ is an irreducible component of $D=\partial_aS$.
    Let $D'$ be the union of all other irreducible components of $D$ and set $D_W \coloneqq W\cap D'$. Assume that $X$ is smooth in a neighborhood of any point in $\partial S\cap (W\setminus D_W)$. 
    Moreover, assume that $W\setminus D_W$ is smooth, and that the interior of $F$ is contained in it.
    Then we have $\partial_aF\subset D_W$ and
    \[
        \partial_W\bigl(\iota_*[S]\bigr) = \iota_*[F] \in H_{n-1}(W,D_W),
    \]
    where $\partial_W$ is the boundary map in relative homology, $\iota$ is the natural inclusion of the faces into their Zariski closures and $\iota_*$ is the pushforward in relative homology.
\end{lemma}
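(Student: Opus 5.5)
We will prove the two claims separately, starting with $\partial_aF\subset D_W$ and then computing the boundary map on the fundamental class.

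\emph{Step 1: the algebraic boundary of $F$.} We will show that $\partial F\subset D_W$, where $\partial F$ is the boundary of $F$ inside $W(\R)$. Take a point $p\in \partial F$ and suppose $p\notin D_W$. By hypothesis $W\setminus D_W$ is smooth near $p$, and $X$ is smooth near $p$ since $p\in\partial S\cap(W\setminus D_W)$. Near $p$ the only component of $D$ is $W$, so $\partial S$ is locally contained in $W$. Because $W$ is a smooth hypersurface of the smooth $X(\R)$ near $p$, a small ball $U$ around $p$ is cut by $W$ into two sides. Since $\partial S\cap U\subset W$, each side lies entirely inside or entirely outside $\open S$.

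If both sides were inside, then $W\cap U\subset S$ and $p$ would be an interior point of $S$, not a boundary point. If both were outside, then $S\cap U\subset W$, which contradicts regularity of $S$ near $p\in\partial S$. So exactly one side is inside, and $W\cap U\subset \partial S$. Hence $p$ lies in the interior of $W\cap\partial S$, so in $\open F$, contradicting $p\in\partial F$.

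Therefore $\partial F\subset D_W$, and taking Zariski closures gives $\partial_aF\subset D_W$. In particular $F$ defines a class $[F]\in H_{n-1}(W,D_W)$. Its interior is oriented by the boundary orientation from $S$, via the sidedness just established on $\open F\subset W\setminus D_W$.

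\emph{Step 2: the boundary computation.} Recall that $\partial_W$ is the composite
\[
H_n(X,D)\to H_{n-1}(D,D')\xleftarrow{\ \sim\ }H_{n-1}(W,W\cap D')=H_{n-1}(W,D_W),
\]
where the first map is the connecting map of the triple $(X,D,D')$ and the second is excision.

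Triangulate $X$ semi-algebraically, compatibly with $S$, $F$, and the components of $D$. Then $[S]$ is represented by the sum $c$ of oriented $n$-simplices in $S$, whose boundary $\partial c$ is supported on $\partial S\subset D$. We will write $\partial c=c_W+c'$, with $c_W$ the sum of $(n-1)$-simplices lying in $W$ but not in $D'$, and $c'$ supported in $D'$. By Step 1, the $(n-1)$-simplices of $\partial S$ in $W\setminus D'$ are exactly the top simplices of $F$ away from $D_W$. Each appears with coefficient $\pm1$ according to the induced orientation, because locally only one side lies in $S$. Hence modulo $D'$ we get $c_W=[F]$, which gives $\partial_W(\iota_*[S])=\iota_*[F]$.

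\emph{Main obstacle.} The delicate point is the local two-sides argument in Step 1, since it needs $X(\R)$ to be a manifold near $p$ and $W(\R)$ to be a hypersurface there. We will handle this with the smoothness hypotheses, using that a smooth real point of a real hypersurface $W$ has $W(\R)$ locally of real codimension one. A second subtlety is that $F$ might contain lower-dimensional pieces of $W\cap\partial S$ that lie away from $D_W$. Step 1 excludes these: any such point would lie in $\open F$, so the class computation involves only the closure of the interior, which is $F$.
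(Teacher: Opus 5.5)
Your proof is correct and follows essentially the same route as the paper. The local two-sides argument at points of $\partial S\cap(W\setminus D_W)$ gives $\partial S\setminus D'\subset\open F$, hence $\partial F\subset D_W$ and a well-defined boundary orientation on $\open F$; a compatible triangulation then shows that the boundary of the fundamental chain of $S$ is the fundamental chain of $F$ plus a chain supported on $D'$. The only difference is cosmetic: the paper triangulates just the compact set $S$ and pushes forward from $H_{n-1}(F,\partial F)$ by naturality, whereas you triangulate the ambient space and compute directly in $H_{n-1}(W,D_W)$.
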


\begin{proof}
    Let $p\in\partial S\cap (W\setminus D_W)$.
    There is an analytic neighborhood $U\subset (X\setminus D')(\R)$ of $p$ such that $W\cap U$ is a smooth hypersurface in $U$.
    By assumption $X$ is smooth in a neighborhood of $p\in \partial S\cap (W\setminus D_W) $, hence we can choose local coordinates $x_1,\ldots, x_n$ such that $U\cap W$ is a coordinate hyperplane in $U$ given by the vanishing of $x_1$.
    The hypersurface $W$ divides $U$ into two connected components, which a priori are each either contained in $S$ or disjoint from $S$.
    The set $S$ must contain exactly one of these components because $p$ lies on the boundary of $S$. 
    Thus $U\cap S$ is homeomorphic to the half-ball $\{x\in \R^n\mid x_1 \geq 0\}$.
    This implies that the boundary of $S$ is locally at $p$ given by $W$, i.e.,
    \[
        \partial S\cap U = W\cap U.
    \]
    Since $W$ is locally given by the vanishing of $x_1$, it follows that $p\in \open F$. 
    By varying $p$, this shows $F\setminus D_W\subset \open F$.
    As we assumed the reverse inclusion, we get
    \[
        \open F = F\setminus D_W = \partial S\setminus D'.
    \]
    Combining this with $\open (F\cap U)\cong \R^{n-1}$ shows that $\open F$ is an orientable manifold, since it is contained in the half ball $\{x\in \R^n\mid x_1 \geq 0\}$, with orientation induced by $S$.
    Moreover, we get $\partial F\subset D_W$ and $\partial_a F\subset D_W$.

    \medskip
     
    Now we choose a simplicial triangulation of the compact set $S$ that is compatible with the semi-algebraic sets $\partial S$, $F$, and $S\cap D'$, which exists by \cite{hironaka1975triangulations}.
    We denote the simplicial chains in this compatible triangulation representing the fundamental classes of $S$ and $F$ by $c_S$ and $c_F$, respectively. Since our chosen orientation of $\open F$ is the one induced by $S$, and since $\partial S\setminus D' = \open F$, we get
    \begin{align}\label{eq:c_S}
        d_n(c_S) = c_F + c', 
    \end{align}
    where $d_n:C_n(S)\to C_{n-1}(S)$ is the simplicial differential and $c'$ is a chain supported on $D'\cap \partial S$.
    Next, we use the partial boundary map in \cite[Eq.~(10)]{brown-dupont}
    \begin{align}\label{eq:partial_boundary_map}
        \partial_F: H_n(S, F \cup (D'\cap \partial S)) \to H_{n-1}(F \cup (D'\cap \partial S), (D'\cap \partial S)) \cong H_{n-1}(F,\partial F),
    \end{align}
    where the last isomorphism follows from excision.
    By the above equation \eqref{eq:c_S} we get
    \[
        \partial_F([S])=[F] + [c'] \equiv [F] \in H_{n-1}(F,\partial F),
    \]
    where the class $[c']$ is $0$ in $H_{n-1}(F,\partial F)$ because $c'$ was supported on $D'\cap \partial S$.
    Composing with the pushforward induced by the inclusion $\iota\colon X(\R) \hookrightarrow X$, we get the desired identity of cycles under the map $H_n(X,W\cup D')\to H_{n-1}(W,D_W)$.
\end{proof}

Let us illustrate which situations are avoided by the assumptions of Lemma~\ref{lem:partial-boundary-face}.
\begin{example}
    Consider the union $\Delta\subset \R^2$ of the two regions $\Delta_1$ and $\Delta_2$ shown in Figure \ref{fig:twoSimplices}.
    Its algebraic boundary is given by the union of lines defined by $x$, $y$ and $x+y\pm 1$, where $x,y$ are the coordinates of $\R^2$.
    Consider the face $F$ of $\Delta$ given by restricting it to $y=0$, which is the line segment $[-1,1]\cong [-1,1]\times \{0\}$.
    However, $\open F = (-1,1)$ is not contained in $\cV(y) \setminus D_{y}=[-1,1]\setminus \{-1,0,1\}$, contradicting the assumptions in Lemma \ref{lem:partial-boundary-face}.
    Indeed, the partial boundary map $\partial_{y}:H_2(\P^2,\partial_a\Delta)\to H_{1}(\cV(y),D_{y})$ sends $[\Delta]$ to $[0,1]-[-1,0]$, which is not the class $[0,1]+[-1,0]$ of $[F]$.
\end{example}

     \begin{figure}[htb!]
         \centering
         \begin{tikzpicture}[scale=1.4]

        \draw[-, very thick] (-1.3,0) -- (1.3,0) node[right] {$y$};
        \draw[-, very thick] (0,-1.3) -- (0,1.3) node[above] {$x$};
        \draw[-, very thick] (-0.3,1.3) -- (1.3,-0.3) node[pos=0.5, right, above, xshift =0.55cm] {$x{+}y{-}1$};
        \draw[-, very thick] (-1.3,0.3) -- (0.3,-1.3) node[pos=0.7, left, above, xshift =-1.2cm] {$x{+}y{+}1$};
        
        \fill[ForestGreen!20] (0,0) -- (1,0) -- (0,1) -- cycle;
        \draw[ForestGreen, very thick] (0,0) -- (1,0) -- (0,1) -- cycle;
        \draw[ForestGreen, very thick, midarrow] (0,0) -- (1,0);
        \draw[ForestGreen, very thick, midarrow] (1,0) -- (0,1);
        \draw[ForestGreen, very thick, midarrow] (0,1) -- (0,0);
        
        \fill[BurntOrange!20] (0,0) -- (-1,0) -- (0,-1) -- cycle;
        \draw[BurntOrange, very thick] (0,0) -- (-1,0) -- (0,-1) -- cycle;
        \draw[BurntOrange, very thick, midarrow] (0,0) -- (-1,0);
        \draw[BurntOrange, very thick, midarrow] (-1,0) -- (0,-1);
        \draw[BurntOrange, very thick, midarrow] (0,-1) -- (0,0);
        
        \node[ForestGreen] at (0.35,0.35) {$\Delta_1$};
        \node[BurntOrange] at (-0.35,-0.35) {$\Delta_2$};
        
        \end{tikzpicture}
    \caption{Two triangles in the plane connected in codimension $2$.}
    \label{fig:twoSimplices}
     \end{figure}

We are now ready to formulate the main result of this section, namely Theorem \ref{thm:main_1} from the introduction.
Here we write $\oOmega$ for the canonical map from Lemma \ref{lem:open_canonical_map}, $[S]$ for the homology cycle in $H_n((X\setminus R)(\mathbb{R}),(D\setminus R)(\mathbb{R}))$ of the semi-algebraic set $S$, and ${\iota: ((X\setminus R)(\mathbb{R}),(D\setminus R)(\mathbb{R})) \hookrightarrow (X\setminus R,D\setminus R)}$ for the inclusion inducing the pushforward $\iota_*$ in relative homology. Finally, $r_*$ is the map induced by restriction from Lemma \ref{lem:open_forms}. 

\begin{theorem}\label{thm:fromBDtoPG} 
    Let $S$ be a non-empty regular semi-algebraic subset of a smooth real projective variety $X$ of dimension $n$ with orientable interior.
    Let $D$ be the algebraic boundary of $S$.
    Suppose $S$ has a well-defined face stratification $\cF(S)$ which is a graded poset of rank $n$. Let $R:=\mathcal{R}(S)$ be the residual arrangement of $S$.
    Further, assume for every face $F$ of $S$ of dimension $d>0$ with Zariski closure $V$ the following.
    \begin{enumerate}
        \item $V$ is normal and $V\setminus W$ is smooth, where $W$ is the union of all varieties in the algebraic stratification $\cL(D)$ that are codimension two subsets of $V$;

        \item $H^d(V,\mathcal{O}_V)=0$;

        \item The interior of $F$ is contained in $V\setminus D_V$, where $D_V$ is the union of all varieties in $\mathcal{L}(D)$ that are codimension one subsets of $V$.
    \end{enumerate}
    Suppose $(X\setminus R,D\setminus R)$ is a recursive genus zero pair, and the set $R$ is disjoint from $S$.
    Then $(X,S)$ is a positive geometry with canonical form  $r_*\oOmega(\iota_*[S])$.
\end{theorem}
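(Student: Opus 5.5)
We argue by induction on $n=\dim X$, proving simultaneously for every face $F$ of $S$ with Zariski closure $V$ that $(V,F)$ is a positive geometry with canonical form $r_*\oOmega(\iota_*[F])$. The relevant pair for $F$ is $(V\setminus R, D_V\setminus R)$. Since $\cF(S)$ is graded and $R$ is the union of the residual strata, the algebraic boundary of $F$ is exactly the union of the codimension-one strata of $V$ in $\cF(S)$, and $D_V\setminus R$ is that boundary with residual pieces removed. The recursive genus zero hypothesis gives $g(V\setminus R,D_V\setminus R)=0$ for every such $V$. The base case $d=0$ is the vertex case; there $\oOmega$ is the tautological map $\sum a_p[p]\mapsto(p\mapsto a_p)$, giving $\pm1$. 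Remark \ref{rem:conrner_top_implies_corner_face} ensures that every face has vertices, so the recursion terminates at genuine points.

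\textbf{Existence and pole structure.} Fix a face $F$ of dimension $d>0$ with closure $V$. Hypothesis 1 gives $V$ normal and $V\setminus W$ smooth. Since $R\cap S=\varnothing$ and $V\setminus(R\cup W)$ is smooth, Lemma \ref{lem:open_canonical_map} and Construction \ref{const:good_cmp} apply and produce $\oOmega(\iota_*[F])\in\olog^d(\mathcal X\setminus\mathcal D)$. Lemma \ref{lem:open_forms} and Remark \ref{rem:everything_extends} then give a rational top form $\Omega_F:=r_*\oOmega(\iota_*[F])$ on $V$ with at most simple poles along $D_V$.

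\textbf{Residues.} For each facet $F'$ of $F$ with closure $V'\subset D_V$, we apply Proposition \ref{prop:residues} with $X=V$, $D=D_V$, and the codimension-two strata playing the role of the closed subset $W$. The genus hypotheses hold by recursion. Combined with Lemma \ref{lem:partial-boundary-face}, which needs hypothesis 3 and $V\setminus D_V$ smooth, this gives $\res_{V'}\Omega_F=\Omega_{F'}$. To match the paper's notation, one takes the component-wise partial boundary onto $V'$ and excises $D_{V'}$ exactly as in that lemma. By the inductive hypothesis, $\Omega_{F'}$ is the canonical form of the positive geometry $(V',F')$, and it is nonzero because $F'$ has a vertex. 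The poles of $\Omega_F$ along each boundary component are therefore genuinely present. Poles along components of $D_V$ that are not face strata cannot occur: such components are residual, hence lie in $R$, and the form is regular on $V\setminus(D_V\cup R)$; its behaviour across $R$ is controlled by Lemma \ref{lem:open_forms}. This establishes (a) and (b) of Definition \ref{def:pos_geom_lam}.

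\textbf{Uniqueness.} Suppose two forms satisfy the definition. Their difference has at most simple poles along the boundary components with zero residues, so it is a regular section of $\dsheaf_V$ away from codimension two. Since $V$ is normal and $\dsheaf_V$ is reflexive, it extends to a global section of $\dsheaf_V$. By Serre duality on the projective variety $V$ of dimension $d$, $H^0(V,\dsheaf_V)\cong H^d(V,\mathcal O_V)^\vee$, which vanishes by hypothesis 2. Hence the difference is zero.

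\textbf{Main obstacle.} The hardest step is the residue compatibility. The form $\Omega_{F'}$ obtained from the residue lives on the compactification $\mathcal D_W$ of Lemma \ref{lem:compatible_good_compactification}, whereas the recursion uses the construction for $(V'\setminus R, D_{V'}\setminus R)$ with its own compactification. I would identify the two using the independence statement of Lemma \ref{lem:open_canonical_map} on the common open set $V'\setminus(R\cup D_{V'})$, and then extend across codimension two by the injectivity in Lemma \ref{lem:open_forms}. A second point to check is that the residue taken on $\mathcal X$ agrees with the residue of the pushed-forward rational form on $V$. Since $f$ is an isomorphism near generic points of $V'$, this should follow.
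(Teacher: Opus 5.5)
Your proposal is correct and follows essentially the same route as the paper. Existence comes from Lemmas \ref{lem:open_canonical_map} and \ref{lem:open_forms}, the residue recursion from Proposition \ref{prop:residues} combined with Lemma \ref{lem:partial-boundary-face}, normalisation $\pm 1$ at vertices, and uniqueness from normality together with $H^0(V,\dsheaf_V)\cong H^d(V,\mathcal{O}_V)^\vee=0$.

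Two inputs you invoke but do not verify. First, smoothness of $V\setminus D_V$ (and of $V\setminus(D_V\cup R)$) must be derived from hypothesis 1. The paper does this by noting that gradedness of $\cF(S)$ and the removal of $R$ force every codimension-two stratum of $V$ into $D_V$. Second, you should record, as the paper does, that each face inherits an orientable interior (by the proof of Lemma \ref{lem:partial-boundary-face}), so that $[F]$ is defined.
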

\begin{proof}
    For simplicity we will write $(\mathring{X}, \mathring{D})$ for the pair $(X\setminus R, D\setminus R)$ and $\sigma$ for the homology class $[S]$. Since $(\mathring{X}, \mathring{D})$ is a recursive genus zero pair, and $X$ is smooth, the map 
    $$\oOmega: H_n(\mathring{X}, \mathring{D})\to \Omega^n_{\mathrm{log}}(\mathcal{X}\setminus\mathcal{D})$$
    exists by Lemma \ref{lem:open_canonical_map}.
    Here $(\mathcal{X},\mathcal{D})$ is a good compactification of a modification of $(\mathring{X},\mathring{D})$ as in Construction \ref{const:good_cmp}.
    By Lemma \ref{lem:open_forms} we get a rational differential $n$-form $r_*\oOmega(\sigma)$ on $X$, which is regular on $X\setminus D$, has logarithmic poles along $D$ and has no iterated poles along $R$. 

    We first show that $r_*\oOmega(\sigma)$ satisfies the recursive property.
    We prove the statement by recursion on the dimension of the faces.
    By assumption the interior of $S$ is orientable.
    Let $V$ be a facet stratum, that is, an irreducible component of $D=\partial_a S$, so that the corresponding facet $F$ is the closure of the interior of $V\cap \partial S$.
    So $F$ is non-empty, and it is regular by construction. Its interior is orientable by the proof of Lemma \ref{lem:partial-boundary-face}.
    Moreover, since $\mathcal{F}(S)$ is graded of rank $n$, the algebraic boundary $\partial_aF\subset D_V$ is pure of codimension one in $V$.
    
    Every pair $(V\setminus R,D_V\setminus R)$ for $V\in\cL(D)\setminus \mathcal{R}(S)$ has genus zero and the complement $V\setminus W$ is smooth by assumption. 
    Moreover, for every facet $F$ the Zariski closure $V$ must be smooth in a neighborhood of $\partial S\cap (V\setminus D_V)$, which can be seen as follows.
    Any codimension two stratum in $W$ is contained in the intersection of two distinct irreducible components of $D_V$: every codimension two element of $\mathcal{L}(D\setminus R)$ contained in $V$ must also lie in $D_V$, since $\mathcal{F}(S)$ is graded and the residual arrangement is removed.
    Hence, $W\cap V\subseteq D_V$ and thus $V\setminus D_V\subseteq V\setminus W$ where the latter is smooth by assumption.
    So $V$ is smooth at every point of $\partial S\cap(V\setminus D_V)$.
    Thus, Lemma~\ref{lem:partial-boundary-face} applies, and combining it with  Proposition~\ref{prop:residues} yields
    \[
      \res_V\bigl(\oOmega(\iota_*[S])\bigr) = \oOmega\bigl(\partial_V(\iota_*[S])\bigr) = \oOmega(\iota_*[F]).
    \]
    where $\mathrm{Res}_V$ is the iterated residue map along any chain in $\mathcal{F}(S)$ leading to $V$. 
    
    Now we can continue recursively by the assumption that the pair $(X\setminus R,D\setminus R)$ is a recursive genus zero pair, and because the face stratification of any face $F$ is a graded sub-poset of $\cF(S)$ of rank $\dim F$, since it is a principal lower order ideal.
    This shows that the iterated residue of $r_*\oOmega(\iota_*[S])$ has the correct logarithmic poles along the faces of $S$.
    By construction of the map $\oOmega$ (see Remark \ref{rem:everything_extends}), the form $r_*\oOmega(\iota_*[S])$ has vanishing iterated residues along the components of $\mathcal{R}(S)$, i.e. it has poles exactly along the faces of $S$. 

    We proceed by showing that the canonical forms appearing in the recursion have residues that evaluate to $\pm 1$ at the vertices. Firstly, by Remark \ref{rem:conrner_top_implies_corner_face} every face of $S$ has a vertex. 
    Now applying Proposition \ref{prop:residues} for $n=1$ (that is, to one-dimensional faces of $S$) together with the definition of the canonical map in dimension zero (immediately before Lemma \ref{lem:open_canonical_map}) we get that the residue of $\oOmega(\iota_*[S])$ at any vertex evaluates to $\pm 1$.
    This shows that $\oOmega(\iota_*[S])$ satisfies conditions (a), (b) and (c) from Definition \ref{def:pos_geom_lam}.

    It remains to prove that the canonical forms above are unique in each iterative step.
    Assume for the sake of contradiction that for the $d$-dimensional pair $(V, F)$ of a face $F\subset S$ with face stratum $V$, there exist two forms $\omega_1$ and $\omega_2$ satisfying conditions (b) and (c).
    Since $V$ is normal, we have that $\omega_1-\omega_2$ is a holomorphic $d$-form on $V$ and thus is equal to zero since $H^d(V, \cO_V)=0$. This can be shown as follows.
    Note that the dualizing sheaf $\dsheaf_{V}$ is isomorphic to the extension of the canonical line bundle $\Omega^d_{V}$ on the smooth locus of $V$ by \cite[Proposition 5.75]{birationalGeometry}.
    Therefore, by construction of the dualizing sheaf we have $0=H^d(V, \O_{V})^\vee \cong H^0(V, \dsheaf_{V})\cong H^0( V,\Omega^{[d]}_{V})$, where $\Omega^{[d]}\coloneq \det(\Omega^1_{V})^{\vee\vee}$. 
    Hence there are no non-zero holomorphic $d$-forms on $V$.
\end{proof}

\begin{example}
    Theorem \ref{thm:fromBDtoPG} allows one to recover the known results that polytopes in $\mathbb{P}^n$ and the totally non-negative Grassmannian $(\mathrm{Gr}(k,n), \mathrm{Gr}(k,n)_{\geq 0})$ are positive geometries. For simple polytopes all the assumptions of the theorem are trivially satisfied. For the totally non-negative Grassmannian, the necessary properties follow from \cite{postnikov} and \cite{Knuston2013positroid}.
\end{example}

\begin{example}\label{ex:nonstrong_pair}
    Let $X=\P^3$ and let $D$ be the union of the four hyperplanes
    $H_i\coloneqq\cV(\ell_i)$ and the smooth quadric $Q\coloneqq\cV(q)$,
    where
    \begin{align}
        \ell_1 &= 3772w + 824x + 533y - 1886z,\\
        \ell_2 &= 2506w - 14x - 468y - 1253z,\\
        \ell_3 &= 11430w - 1754x + 793y - 5715z,\\
        \ell_4 &= z,\\
        q &= 9w^2 - x^2 - y^2 - z^2.
    \end{align}
    Then $(X,D)$ is a recursive genus zero pair, because every element of $\cL(D)$ is smooth and rational, see, e.g.,~\cite[Proposition 3.26]{brown-dupont}.
    Note that for each $1\leq i<j\leq 3$, the intersection
    $H_i\cap H_j\cap Q$ consists of two rational points.
    Next, consider the semi-algebraic region
    \[
        S\coloneqq
        \left\{
            (x,y,z)\in\R^3 \mid 
            \ell_i(1,x,y,z)\geq 0\ \text{for }1\leq i\leq 4,
            \quad x^2+y^2+z^2\leq 9
        \right\} \subset \A^3\cong \P^3\setminus \cV(w).
    \]
    The region $S$ itself is shown in Figure~\ref{fig:nonstrong-region}, and its face stratification is shown in Figure~\ref{fig:nonstrong-poset}.
    It has five facets $F_1,F_2,F_3,F_4$ and $F_Q$, seven one-dimensional faces, of which $e_{ij}=H_i\cap H_j$ for $i<j\in [3]$ are linear, and the remaining ones $c_i$ are curve segments on $Q$.
    Lastly, $S$ has four vertices, namely $a=(0,0,2)=H_1\cap H_2\cap H_3$ and the three points $v_{ij}\in S\cap H_i\cap H_j\cap Q$, where $1\leq i<j\leq 3$.
    The facet $F_4=S\cap H_4$ is a disk whose boundary circle $c_4=F_{Q}\cap F_4$ contains no vertices. In Figure~\ref{fig:nonstrong-poset}, the \textcolor{teal}{teal}-colored part of the face stratification does not have any chain of maximal length $3$ in the face stratification $\cF(S)$.
    In particular, this implies that the face stratification $\cF(S)$ is not graded.
    
    \begin{figure}[htbp]
        \centering
        \begin{subfigure}[c]{0.4\linewidth}
            \centering
            \includegraphics[width=\linewidth]
                {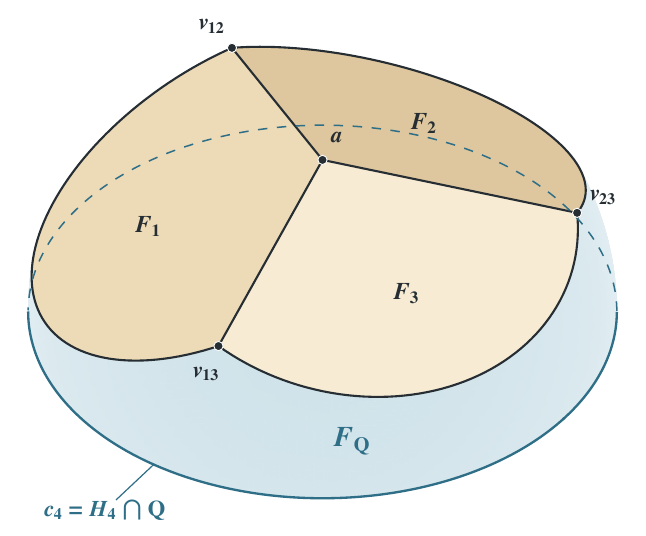}
            \caption{The region $S$.}
            \label{fig:nonstrong-region}
        \end{subfigure}
        \hfill
        \begin{subfigure}[c]{0.58\linewidth}
            \centering
            \includegraphics[width=\linewidth]
                {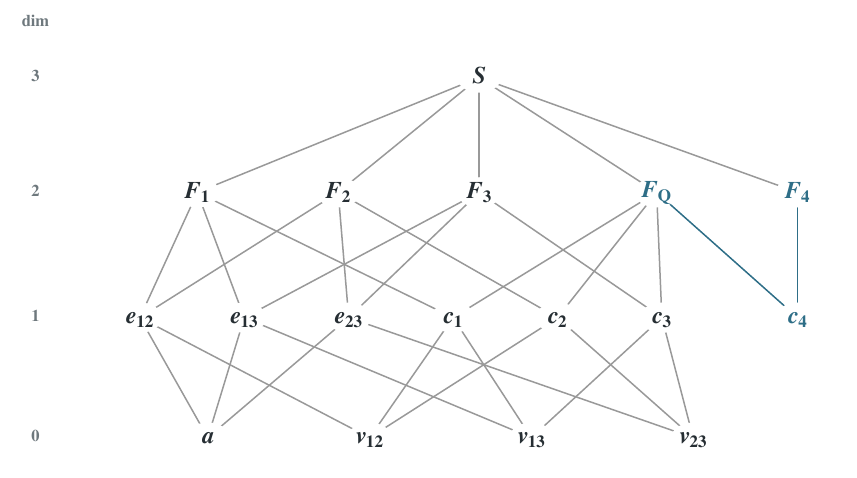}
            \caption{The face stratification $\cF(S)$.}
            \label{fig:nonstrong-poset}
        \end{subfigure}
        \caption{%
            The semi-algebraic set $S\subset \R^3$, which is not a positive geometry, bounded by a sphere and three planes, with facet $c_4=F_Q\cap H_4$ without vertices.%
        }
        \label{fig:nonstrong-geometry}
    \end{figure}
    
    Let us now discuss $S$ from the point of view of positive geometries.
    First, $S$ gives rise to a homology cycle in $H_3(X,D)$, which has three vertices on the quadric $Q$, one in each of the three intersections $H_i \cap H_j \cap Q$ with $i\neq j$, $i,j<4$. 
    So its residual arrangement is the union of the other $3$ points, one in each of the three intersections $H_i \cap H_j \cap Q$ with $i\neq j$, $i,j<4$, and three lines, namely $H_1 \cap H_4$, $H_2\cap H_4$, and $H_3\cap H_4$.
    The adjoint is therefore the product of the hyperplane $\cV(\ell_4)$ containing the three lines and the hyperplane spanned by the three points, which is defined by the linear form $\ell = 2360 x - 2145 y - 1036741 z + 2451652 w$.
    So the canonical form of $S$ is 
    \[
        \omega_S = \frac{\ell \cdot z}{\ell_1 \cdot \ell_2 \cdot \ell_3 \cdot q \cdot z} dx\wedge dy\wedge dz = \frac{\ell}{\ell_1 \cdot \ell_2 \cdot \ell_3 \cdot q} dx\wedge dy\wedge dz.
    \]
    This is not a canonical form of a positive geometry because it does not have a pole along the boundary component $H_4$.
    This is because the boundary curve $\cV(\ell_4,q) = H_4\cap Q$ does not contain a vertex of the cycle.
    In fact, from the point of view of the mixed Hodge theory framework, the reason is that the canonical form of the homology cycle in $H_3(X,D)$ defined by the semi-algebraic set $z\geq 0$, $9 \geq x^2 + y^2 + z^2$ is mapped to the zero-form: the canonical form is a global section of $\Omega^3_{\log}(\P^3\setminus (H_4\cup Q)) \cong  \Omega^3_{\P^3}(H_4\cup Q)$, where we used that $H_4\cup Q$ is a simple normal crossing divisor.
    However, $\Omega^3_{\P^3}(H_4\cup Q)\cong \mO_{\P^3}(-1)$, which does not have any global sections.
\end{example}

Our goal for the rest of the article is to apply Theorem \ref{thm:fromBDtoPG} to the $m=2$ amplituhedron $\mathcal{A}_{n,k,2}(Z)$.
This is not at all straightforward, as by \cite[Theorem 4.1]{KPScube} the pair $(\mathrm{Gr}(k,k+2), \partial_a \mathcal{A}_{n,k,2})$ has strictly positive genus for $k\geq 3$ and large enough $n$.
We will therefore produce a different pair which will be a recursive genus zero pair and in which the amplituhedron will give rise to a relative homology cycle.

\section{Faces of the $m=2$ amplituhedron} \label{sec:faces}

In this section we concentrate on the faces of the $m=2$ amplituhedron and for the sake of brevity write $\mathcal{A}_{n,k}:=\mathcal{A}_{n,k,2}(Z)$. 
The main goal of this section is to show that for generic choices of $Z$ the faces of the amplituhedron $\aA_{n,k}$ are supported on varieties of genus zero, and that any Zariski open subset of such varieties also has genus zero.
Moreover, as we show in \cite[Proposition 4.11]{KPScube}, the obvious obstructions for the pair $(\gr(k,k+2),\partial_a\aA_{n,k})$ to have vanishing genus come from residual strata $W\in \mathcal{L}(\partial_a\aA_{n,k})$. 
Therefore, we conclude that any obstruction for the pair $(\mathrm{Gr}(k,k+2), \partial_a\mathcal{A}_{n,k})$ to have genus zero must come from the residual arrangement $\mathcal{R}(\mathcal{A}_{n,k})$.
We will use this in the proof of our main Theorem \ref{thm:main_2} to bound the genus of the open pair $(\gr(k,k+2)\setminus R,\partial_a\aA_{n,k,2}(Z)\setminus R)$, where $R$ is the union of all residual components in the residual arrangement.

\medskip

This section is organized as follows.
We start in Subsection \ref{sec:ratsing_genus} by recording a number of technical results that are useful for computing genera of varieties with rational singularities.
Then, in Subsection \ref{sec:lam} we show that face strata of the amplituhedron are unirational varieties with precisely this type of singularities.
Moreover, we show that residual strata are disjoint from the amplituhedron,
which is one of the conditions of Theorem \ref{thm:fromBDtoPG}.
Lastly, in Subsection \ref{subsec:faces_genus_zero} we show that all face strata of the amplituhedron are genus zero varieties.

\subsection{Rational singularities, sheaf cohomology, and the genus}\label{sec:ratsing_genus}

In this subsection we present a number of technical results governing the behavior of the genus of a variety $X$ under resolutions of singularities and, more generally, surjective morphisms.
We will use these results in Subsection \ref{subsec:faces_genus_zero}, by applying them to the face strata of $\mathcal{A}_{n,k}$.

\medskip

In the following let $X$ be a complex projective variety, $\O_{X}$ its structure sheaf, and, if $X$ is normal, $\omega_X$ the canonical sheaf.
For a birational morphism $f:Y\to X$ of projective varieties also denote by $R^kf_*\cF$ the higher direct image sheaves on $X$ given by the sheafification of $U\mapsto H^k(f^{-1}(U),\cF)$, where $\cF$ is a coherent sheaf on $Y$.
We start with the definition of the type of singularities we will encounter when dealing with amplituhedra.

\begin{definition}\label{def:rational_sing}
    Let $X$ be a projective variety.
    $X$ has \emph{rational singularities} if for any resolution of singularities $\pi:\wt{X}\to X$ with $\wt X$ smooth we have:
    \begin{enumerate}
        \item $\pi_{*}\O_{\wt X}\cong \O_{X}$,
        \item the higher direct images vanish, i.e., $R^k\pi_{*}\O_{\wt X}=0$ for all $k>0$.
    \end{enumerate}
\end{definition}

\begin{remark}\label{rem:rational_sing_implies_normal}
    Note that varieties with rational singularities are normal by \cite[Remark~2.7]{SchwedeTakagi2008_rat_normal}.
    This essentially follows from the condition $\pi_{*}\O_{\wt X}\cong \O_X$.
\end{remark}

\begin{lemma}\label{dim_sum_hodge_numbers}
    Let $X$ be a $d$-dimensional irreducible complex projective variety.
    Then 
    \[
        g(X)+\mathrm{cr}(X) = \sum_{p\geq 0}h^{p,0}(X)\leq \dim H^d(X, \mathcal{O}_X).
    \]
    If $X$ has rational singularities, equality holds. 
\end{lemma}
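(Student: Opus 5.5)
We identify $g(X)+\crank(X)$ with the sum of $h^{p,0}$ over $p\geq 0$ and then compare with coherent cohomology. Recall $H_d(X)$ is dual to $H^d(X)$ as mixed Hodge structures, so $h^{-p,-q}(H_d(X))=h^{p,q}(H^d(X))$. Hence $g(X)+\crank(X)=\sum_{p\geq 0}h^{-p,0}(H_d(X))=\sum_{p\geq0}h^{p,0}(H^d(X))$, which is the middle expression. Since $X$ is projective (compact), $H^d(X)$ has weights $\leq d$, so the only nonzero $h^{p,0}$ have $p\leq d$.

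For the inequality we use Du Bois theory. For a proper variety the Hodge filtration satisfies $\grf^0 H^d(X,\C)\cong \mathbb{H}^d(X,\underline{\Omega}^0_X)$, where $\underline{\Omega}^0_X$ is the zeroth graded piece of the Du Bois complex. The numbers $h^{p,0}$ with $p$ ranging over $0,\dots,d$ are exactly the pieces $h^{0,q}$ of $\grf^0$ after conjugation: by the symmetry $h^{p,q}=h^{q,p}$ on each pure graded piece, $\sum_{p}h^{p,0}=\sum_q h^{0,q}=\dim\grf^0H^d(X,\C)$. There is a natural map $\O_X\to\underline{\Omega}^0_X$, and we obtain $\dim \grf^0 H^d(X,\C)\leq \dim H^d(X,\O_X)$ as follows. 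Take a resolution $\pi\colon\wt X\to X$ and compare via the Leray spectral sequence $H^i(X,R^j\pi_*\O_{\wt X})\Rightarrow H^{i+j}(\wt X,\O_{\wt X})$. Alternatively, and more cleanly, we use that $H^d(X,\C)\to H^d(X,\O_X)$ is surjective (by Hodge theory, the map $H^i(X,\C)\to H^i(X,\O_X)$ is surjective for proper $X$) and factors through $\grf^0$; the required inequality is the opposite one, however. The cleanest route is therefore to show that $H^d(X,\O_X)\to \mathbb{H}^d(X,\underline{\Omega}^0_X)$ is surjective in top degree $d=\dim X$. This holds because the cone of $\O_X\to\underline\Omega^0_X$ is supported in cohomological degrees whose hypercohomology contributions vanish in degree $d+1$ and above. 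Concretely, $\underline\Omega^0_X$ is computed by a cubical hyperresolution whose components have dimension $<d$ except for one; the top cohomology in degree $d$ receives surjectively from $H^d(X,\O_X)$ since $H^{d}$ is right exact on sheaves on a $d$-dimensional space. This yields $\dim\grf^0H^d\leq\dim H^d(X,\O_X)$.

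For rational singularities, $\O_X\to R\pi_*\O_{\wt X}$ is a quasi-isomorphism, and rational singularities are Du Bois, so $\O_X\cong\underline\Omega^0_X$. Then $\grf^0H^d(X,\C)\cong H^d(X,\O_X)$ and equality holds. Alternatively, one can argue directly: $H^d(X,\O_X)\cong H^d(\wt X,\O_{\wt X})$ by Leray, and the latter equals $h^{0,d}(\wt X)$. We then need $H^d(X)\to H^d(\wt X)$ to be an isomorphism on the relevant $\grf^0$ piece, which again follows from the Du Bois isomorphism.

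The main obstacle is the surjectivity $H^d(X,\O_X)\twoheadrightarrow\mathbb H^d(X,\underline\Omega^0_X)$ in the general case. We would prove it by noting that the cone $C$ of $\O_X\to\underline\Omega^0_X$ has coherent cohomology sheaves supported on the non-Du Bois locus, which has dimension $<d$. Thus $\mathbb H^{d+1}$ of $C[-1]$ vanishes by Grothendieck vanishing applied to each cohomology sheaf, using that the Du Bois complex is concentrated in degrees $\geq 0$.
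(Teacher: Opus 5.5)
Your reduction of $g(X)+\crank(X)$ to $\dim\grf^0H^d(X)_\C$ matches the paper. So does your treatment of the rational case (rational $\Rightarrow$ Du Bois, hence $\O_X\simeq\underline{\Omega}^0_X$). The gap is in the key step: the surjection $H^d(X,\O_X)\twoheadrightarrow\mathbb{H}^d(X,\underline{\Omega}^0_X)=\grf^0H^d(X)_\C$ for arbitrary projective $X$ is not established by what you write.

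For the cone $C$ of $\O_X\to\underline{\Omega}^0_X$, you need $\mathbb{H}^d(X,C)=0$. In the hypercohomology spectral sequence this requires control of all terms $H^p(X,\mathcal{H}^{d-p}(C))$. Knowing that the $\mathcal{H}^q(C)$ are supported in dimension $\le d-1$, Grothendieck vanishing only kills the terms with $p\ge d$, i.e.\ $q\le 0$. The terms $H^p(X,\mathcal{H}^{d-p}(\underline{\Omega}^0_X))$ with $0\le p\le d-1$ are untouched. You would need a much finer bound such as $\dim\operatorname{Supp}\mathcal{H}^q(\underline{\Omega}^0_X)\le d-1-q$ for $q\ge 1$, which you neither state nor prove.

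The hyperresolution remark does not fill this gap either. In total degree $d$, every $E_1$-term $H^{d-a}(X_\alpha,\O_{X_\alpha})$ with $|\alpha|=a+1$ and $\dim X_\alpha=d-a$ can contribute, not just one. Moreover, $\O_X\to\underline{\Omega}^0_X$ is not an epimorphism of sheaves, so right exactness of $H^d$ gives nothing here.

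The missing idea is one you almost wrote down, but with the factorization reversed. The map $\C_X\to\underline{\Omega}^0_X$ factors through $\O_X$, giving
\[
H^d(X,\C)\to H^d(X,\O_X)\to\mathbb{H}^d(X,\underline{\Omega}^0_X)=\grf^0H^d(X)_\C .
\]
The composite is the projection $H^d(X,\C)=F^0\to F^0/F^1$. This is surjective because the Hodge--de Rham spectral sequence degenerates at $E_1$ for proper $X$. Hence the second map is surjective, in every degree and with no support analysis. This is exactly the surjection from Kov\'acs that the paper cites.

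Your aside is also incorrect as stated. The map $H^i(X,\C)\to H^i(X,\O_X)$ is not surjective for arbitrary proper $X$; that surjectivity is essentially the characterization of Du Bois singularities. And it is not this map that factors through $\grf^0$; rather, the projection onto $\grf^0$ factors through $H^i(X,\O_X)$.
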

\begin{proof}
    By construction of the Hodge filtration we have
    \[
        h^{0,q}(X) = \dim \grf^0\grw_qH^d(X)_\C=\dim \grw_q\grf^0H^d(X)_\C,
    \]
    see, e.g., \cite[\S 1.2.1]{Deligne1971}.
    Then, 
    \[
        \dim \grf^0H^d(X)_\C = \sum_{p=0,q\geq0} \dim H^d(X)_\C^{p,q}= \sum_{q\geq 0} \dim H^d(X)_\C^{0,q}.
    \]
    Now for complex projective varieties there is a surjection 
    \[
        H^d(X, \mathcal{O}_X) \twoheadrightarrow \mathrm{gr}^0_F H^d(X)_\C
    \]
    (see, e.g., \cite[Eq. (1.5)]{kovács2011intuitivedefinitiondubois}). Thus, by symmetry in the mixed Hodge numbers we have \linebreak $g(X)+\mathrm{cr}(X)=\dim \mathrm{gr}_F^0 H^d(X)_\C \leq \dim H^d(X, \mathcal{O}_X)$ proving the first part of the statement.  
    Now if $X$ has rational singularities, it follows from \cite[Theorem S]{rationaldubois} that $X$ has Du Bois singularities.
    In this case by \cite[Eq.~(1.3)]{kovács2011intuitivedefinitiondubois} the surjection above is an~isomorphism.
\end{proof}

We will need the following technical result, which we believe to be standard but have not found in the literature.  
\begin{lemma}\label{lem:surj}
    Let $X$ be a projective $d$-dimensional variety with rational singularities, and $f:Y\to X$ be a resolution of singularities, then we have $H^d(X, \mathcal{O}_X)\cong H^d(Y, \mathcal{O}_Y)$.
\end{lemma}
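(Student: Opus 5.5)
The plan is to use the Leray spectral sequence for $f$ together with the two defining conditions of rational singularities (Definition \ref{def:rational_sing}). Since $X$ has rational singularities and $f:Y\to X$ is a resolution with $Y$ smooth, we have $f_*\mathcal{O}_Y\cong\mathcal{O}_X$ and $R^qf_*\mathcal{O}_Y=0$ for all $q>0$. This is exactly where it matters that the definition quantifies over \emph{any} resolution, so it applies to the given $f$.

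Next consider the Leray spectral sequence
\[
E_2^{p,q}=H^p(X,R^qf_*\mathcal{O}_Y)\Longrightarrow H^{p+q}(Y,\mathcal{O}_Y).
\]
By the vanishing of the higher direct images, all rows with $q>0$ vanish. The sequence therefore degenerates at $E_2$ and collapses to the bottom row. The edge maps
\[
H^p(X,f_*\mathcal{O}_Y)\to H^p(Y,\mathcal{O}_Y)
\]
are then isomorphisms for every $p$. Combining this with $f_*\mathcal{O}_Y\cong\mathcal{O}_X$ gives $H^p(X,\mathcal{O}_X)\cong H^p(Y,\mathcal{O}_Y)$ for all $p$, and in particular for $p=d$.

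An equivalent formulation avoids spectral sequences: the natural map $\mathcal{O}_X\to Rf_*\mathcal{O}_Y$ is a quasi-isomorphism, so taking $R\Gamma$ gives the claim. To make sure the isomorphism is the natural one, I would identify it with the pullback $f^*:H^d(X,\mathcal{O}_X)\to H^d(Y,\mathcal{O}_Y)$ induced by $\mathcal{O}_X\to f_*\mathcal{O}_Y$.

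I expect no real obstacle; the only point requiring care is bookkeeping. First, $Y$ need not be projective a priori for the definition, but projectivity is not needed for Leray. Second, one should check that $f$ is proper, which it is as a resolution of a projective variety, so that the coherent higher direct images are as in the definition. The resulting isomorphism holds in all degrees, not just $d$.
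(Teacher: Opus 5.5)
Your proposal is correct and follows essentially the same route as the paper. Both arguments use the vanishing $R^qf_*\mathcal{O}_Y=0$ for $q>0$ to collapse the Leray spectral sequence to its bottom row, giving $H^d(Y,\mathcal{O}_Y)\cong H^d(X,f_*\mathcal{O}_Y)$, and then conclude with $f_*\mathcal{O}_Y\cong\mathcal{O}_X$; your additional remarks (the derived-category reformulation and the isomorphism in every degree) are accurate but not needed.
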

\begin{proof}
    Note that $R^qf_*\O_Y=0$ for all $q>0$ by Definition \ref{def:rational_sing} as $X$ has rational singularities.
    Therefore, the Leray spectral sequence degenerates on the second page
    \[
        E^{p,q}_2 = H^p(X,R^q f_*\O_{Y}) \Rightarrow H^{p+q}(Y,\O_{Y}),
    \]
    as all higher differentials $d_r^{p,q}:E_r^{p,q}\to E_r^{p+r,q+1-r}$ vanish, since only the row of the second page $E_2^{p,q}$ with $q=0$ is non-zero.
    This provides an isomorphism $H^d(Y,\mathcal{O}_{Y})\cong H^d(X, f_*\mathcal{O}_{Y})$.
    Lastly, by definition of rational singularities we have $f_*\O_{Y}\cong \O_X$, which yields the claim.
\end{proof}

\subsection{Faces and the residual arrangement in the twistor embedding}\label{sec:lam}

In this subsection we will show that face strata of the amplituhedron are varieties with rational singularities. This will allow us to apply the results of the preceding subsection in the proof of Proposition \ref{prop:faceshavegenuszero}, the main result of this section.

\medskip

The algebraic boundary stratification of $\mathcal{A}_{n,k}$ was completely described in \cite{lam_face_structure}, and we now recall the necessary results and notation. This description uses the notion of the \emph{twistor embedding}
\[
    T_Z:\gr(k,k+2)\hookrightarrow\gr(2,n),\quad X\mapsto  (\langle Xi_1i_2\rangle)_{1\leq i_1<i_2\leq n},
\]
where $X\in\grk$ is given as the rowspan of a $k\times(k+2)$ matrix, and $\langle Xi_1i_2 \rangle$ are twistor coordinates, obtained by stacking $X$ together with the rows $i_1$ and $i_2$ of the matrix $Z$ defining the amplituhedron map and taking the determinant of the resulting matrix.
By \cite[Lemma 1]{lam_face_structure}, the map $T_Z$ is indeed an embedding.

\medskip

The stratification $\mathcal{L}(\aA_{n,k})$ can be studied in the twistor embedding and by \cite{lam_face_structure} it is a subposet of the positroid stratification of $\gr(2,n)$.
That is, any stratum in $\mathcal{L}(\mathcal{A}_{n,k})$ is isomorphic to the intersection of a positroid variety $\Pi_{\mathcal{N}}$ and the image $T_Z(\gr(k,k+2))$. Denote the set of all rank 2 positroids on $n$ elements yielding face (not residual) strata of $\mathcal{A}_{n,k}$ by $P_{n,k}$. 
We start by recording the following consequence of these definitions.

\begin{proposition}\label{cor:face_and_boundary_poset_coincide}
    The face stratification $P_{n,k}$ as in \cite{lam_face_structure} is isomorphic to the face stratification $\cF(\aA_{n,k})$ as in Definition \ref{def:faces_residual}.
\end{proposition}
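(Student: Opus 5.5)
The plan is to compare both posets through the twistor embedding $T_Z$. Lam's description in \cite{lam_face_structure} identifies each closed face of $\aA_{n,k}$ with the set
\[
T_Z^{-1}\bigl(\Pi^{\geq 0}_{\mathcal{N}}\bigr)\cap \aA_{n,k}, \qquad \mathcal{N}\in P_{n,k}.
\]
We take the map
\[
\Phi\colon P_{n,k}\to \cF(\aA_{n,k}),\qquad \mathcal{N}\mapsto T_Z^{-1}(\Pi_{\mathcal{N}}),
\]
and aim to show that $\Phi$ is well defined, bijective, and preserves and reflects order. Since $T_Z$ is a closed embedding (\cite[Lemma 1]{lam_face_structure}) and all strata of $\cL(\aA_{n,k})$ are of the form $T_Z^{-1}(\Pi_{\mathcal{N}})$, it suffices to show the following: the recursively defined faces of Definition~\ref{def:faces_residual} have Zariski closures exactly the strata $T_Z^{-1}(\Pi_{\mathcal{N}})$ with $\mathcal{N}\in P_{n,k}$. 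The inclusion order is then automatically the one induced from the positroid order on $\gr(2,n)$.

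We proceed by induction on codimension.

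\textbf{Base case.} Use Lam's result that the facets of $\aA_{n,k}$ are the sets where a single twistor coordinate $\langle X\,i\,i{+}1\rangle$ vanishes (together with the appropriate sign-flip conditions). The algebraic boundary $\partial_a\aA_{n,k}$ is the union of the hypersurfaces $\{\langle X\,i\,i{+}1\rangle=0\}$. These are irreducible for generic $Z$, being pullbacks of irreducible positroid divisors intersected with $T_Z(\gr(k,k+2))$. Each closure of the interior of $D\cap\partial\aA_{n,k}$ is therefore a facet in Lam's sense, and the Zariski closures agree.

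\textbf{Inductive step.} Suppose a face $F$ corresponds to $\mathcal{N}\in P_{n,k}$ and has Zariski closure $V=T_Z^{-1}(\Pi_{\mathcal{N}})$. We need:
\begin{enumerate}
\item the irreducible components of $\partial_a F$ are exactly $T_Z^{-1}(\Pi_{\mathcal{N}'})$ for those $\mathcal{N}'\in P_{n,k}$ covered by $\mathcal{N}$;
\item the closure of the interior of $D'\cap\partial F$ is Lam's face for $\mathcal{N}'$.
\end{enumerate}
Lam describes the face for $\mathcal{N}$ as the image of a closed positroid-type cell. Its relative interior is the image of the open cell, and its relative boundary is the union of the images of the covered cells that remain in $P_{n,k}$. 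Taking Zariski closures then gives (1). For (2), Lam's faces are regular of the right dimension, since they are images of closed balls homeomorphic to their cells. Hence the recursive construction recovers them.

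\textbf{Conclusion.} Every face stratum in our sense is some $\Phi(\mathcal{N})$. Conversely, every $\mathcal{N}\in P_{n,k}$ is reached by a chain of covers from the top, which follows from gradedness of Lam's poset. Together these give bijectivity, and inclusion of Zariski closures matches the poset order.

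\textbf{Main obstacle.} The hard part is the inductive step. We must show that the semi-algebraic boundary of a face, computed as an analytic boundary inside its Zariski closure, coincides with the union of Lam's lower faces. In particular, no lower-dimensional piece of Lam's face structure may sit inside the relative interior of a higher face, which is the situation of Example~\ref{ex:cornersverices}. We would try first to rule this out with Lam's result that the amplituhedron map restricted to each relevant open cell is a homeomorphism onto the open face, and that distinct open faces are disjoint. This forces the topological boundary of a face to be exactly the union of its lower faces. Genericity of $Z$ is used to ensure that the relevant intersections are irreducible and of the expected dimension.
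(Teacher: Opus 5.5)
Your framing agrees with the paper's. The paper's proof is a direct citation: it observes that the non-residual strata of $\cL(\aA_{n,k})$ are exactly the intersections of $T_Z(\gr(k,k+2))$ with $\Pi_{\mathcal{N}}$ for $\mathcal{N}\in P_{n,k}$. It then takes the identification of the recursively defined faces, together with their Zariski closures and inclusion order, from \cite[Theorem 25]{lam_face_structure}. You instead try to re-derive that theorem by induction on codimension. There is a genuine gap: the inductive step, which you correctly single out as the crux, is never established, and the mechanism you propose for it rests on descriptions of the faces that are false.

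First, a closed face is not $T_Z^{-1}(\Pi^{\geq 0}_{\mathcal{N}})\cap\aA_{n,k}$. The twistor image of $\aA_{n,k,2}$ does not lie in $\gr(2,n)_{\geq 0}$, because the amplituhedron is described by sign flips of twistor coordinates. Already for $k=1$, $n=3$, an interior point $X=aZ_1+bZ_2+cZ_3$ with $a,b,c>0$ has $\langle X12\rangle>0>\langle X13\rangle$. So for the uniform positroid your set misses the entire interior. What cuts out faces are the varieties $\Pi_{\mathcal{N}}$, not their nonnegative cells.

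Second, faces are not homeomorphic images of single closed positroid cells whose relative boundaries are the images of the covered cells. For $k=1$, $n\geq 4$, the amplituhedron is the $n$-gon $\widetilde{Z}(\gr(1,n)_{\geq 0})$, the image of an $(n-1)$-simplex, so the map is not injective on the open cell. Moreover, the image of the lower cell $\{(a,0,c,0,\dots,0)\}$ is the diagonal $[Z_1,Z_3]$, which runs through the interior rather than along the boundary.

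Consequently, nothing in your argument shows any of the following:
\begin{itemize}
\item that the analytic boundary of $F$ inside $V$ is exactly the union of the lower faces, i.e.\ that the situation of Example~\ref{ex:cornersverices} cannot occur;
\item that the components of $\partial_a F$ are exactly the strata for the $\mathcal{N}'$ covered by $\mathcal{N}$;
\item that the closure of the interior of $D'\cap\partial F$ is the corresponding face.
\end{itemize}
So your steps (1) and (2) are unproved, and with them bijectivity of $\Phi$. This is precisely the nontrivial content of \cite[Theorem 25]{lam_face_structure}. The way to close the gap is to invoke that result directly, as the paper does, rather than to reconstruct it from an incorrect picture of the faces as homeomorphic images of cells.
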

\begin{proof}
    By definition the face stratification $\cF(\aA_{n,k})$ is the sub-poset of the algebraic boundary stratification $\cL(\aA_{n,k})$ consisting of all strata which are not residual.
    These strata are precisely intersections of the positroid varieties $\Pi_{\mathcal{N}}$ for $\mathcal{N}\in P_{n,k}$ as defined in \cite{lam_face_structure} with $T_Z(\mathrm{Gr}(k,k+2))$.
    Hence, the claim follows from Theorem 25 in \emph{op. cit.}
\end{proof}

We now use this description to record the following topological property of the amplituhedron $\mathcal{A}_{n,k}$, which we will need in Section \ref{sec:last}.

\begin{proposition}\label{prop:residual_arrangement_disjoint}
For generic choices of the matrix $Z$ the residual arrangement $\mathcal{R}$ of $\mathcal{A}_{n,k}$ is disjoint from $\mathcal{A}_{n,k}$, i.e., $\mathcal{R}\cap \mathcal{A}_{n,k}=\varnothing$. In particular, $\mathcal{R}$ is disjoint from all faces of $\mathcal{A}_{n,k}$.
\end{proposition}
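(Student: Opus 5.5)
The plan is to work entirely in the twistor embedding $T_Z$, where by \cite{lam_face_structure} every stratum of $\mathcal{L}(\mathcal{A}_{n,k})$ has the form $\Pi_{\mathcal{N}}\cap T_Z(\gr(k,k+2))$ for a rank two positroid $\mathcal{N}$ on $[n]$. Lam describes the image $T_Z(\mathcal{A}_{n,k})$ as a union of cells: a point $Y\in\mathcal{A}_{n,k}$ lies in the relative interior of exactly one face, namely the face whose positroid records which twistor coordinates $\langle Y i\, i{+}1\rangle$ vanish, together with the sign-flip data of the sequence $(\langle Y 1 j\rangle)_j$. Concretely, for $Y\in\mathcal{A}_{n,k}$ the vector of twistor coordinates is a point of $\gr(2,n)$ satisfying the sign conditions $\langle Y i\,i{+}1\rangle\ge 0$ (with the twisted sign at $n,1$) and $\operatorname{var}(\langle Y12\rangle,\dots,\langle Y1n\rangle)=k$. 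The goal is to show that if $Y\in\mathcal{A}_{n,k}$ lies in some stratum $\Pi_{\mathcal{N}}\cap T_Z(\gr(k,k+2))$, then already $\mathcal{N}$ dominates (in the positroid order) some $\mathcal{N}'\in P_{n,k}$ with $Y$ in the corresponding face stratum, and moreover $\mathcal{N}$ itself belongs to $P_{n,k}$. Then no residual stratum can contain $Y$.

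The key steps are as follows. First I fix $Y\in\mathcal{A}_{n,k}$ and let $\mathcal{N}_Y$ be the positroid of the unique face of $\mathcal{A}_{n,k}$ containing $Y$ in its relative interior, using the face description of \cite[Theorem~25]{lam_face_structure} together with Proposition~\ref{cor:face_and_boundary_poset_coincide}. Second, I show that any stratum $V=\Pi_{\mathcal{N}}\cap T_Z(\gr(k,k+2))$ of $\mathcal{L}(\mathcal{A}_{n,k})$ containing $Y$ satisfies $\Pi_{\mathcal{N}}\supseteq\Pi_{\mathcal{N}_Y}$. For this I use that positroid varieties are cut out by Plücker vanishing conditions and that, for generic $Z$, the only twistor coordinates vanishing at $Y$ are those forced by the face of $Y$. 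Third, I deduce that $\mathcal{N}$ is an upper interval element above a face positroid. Since the face stratification is a principal lower order ideal structure and every stratum above a face in $\mathcal{L}$ that contains points of $\mathcal{A}_{n,k}$ is itself the closure of a face (Lam's classification of $P_{n,k}$ by sign-flip conditions), $\mathcal{N}\in P_{n,k}$, so $V$ is not residual. The final clause of the statement follows immediately, since faces are subsets of $\mathcal{A}_{n,k}$.

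The main obstacle is the genericity statement in the second step. A point of the amplituhedron must not accidentally satisfy extra twistor-coordinate vanishings that place it on a residual positroid variety. Equivalently, I need $\Pi_{\mathcal{N}}\cap T_Z(\gr(k,k+2))\cap\mathcal{A}_{n,k}$ to be empty whenever $\mathcal{N}\notin P_{n,k}$. My first attempt is a dimension count. For fixed $\mathcal{N}$, consider the incidence variety of pairs $(Z,Y)$ with $Y$ in a given face cell and $T_Z(Y)\in\Pi_{\mathcal{N}}$ with $\mathcal{N}$ not above that face. I would show that its projection to the space of $Z$ is a proper subvariety, so a generic $Z$ avoids all of these finitely many bad loci. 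If that fails, the fallback is Lam's explicit sign-variation argument. A residual positroid imposes a vanishing pattern incompatible with the sign-flip count $k$, because positivity of $\langle Y i\,i{+}1\rangle$ on the relevant arcs forces the flip count to change. Such a contradiction would hold for every positive $Z$, with genericity needed only to ensure that the strata are as Lam describes.
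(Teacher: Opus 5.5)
There is a genuine gap. You correctly isolate the crux, namely that $\Pi_{\mathcal{N}}\cap T_Z(\mathcal{A}_{n,k})=\varnothing$ whenever $\mathcal{N}\notin P_{n,k}$, but you never prove it, and your Step 3 simply assumes it. The claim that every stratum above a face which contains points of $\mathcal{A}_{n,k}$ is a face stratum is the contrapositive of the proposition. The principal-lower-order-ideal structure of the face poset says nothing about $P_{n,k}$ being closed upwards.

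The paper's proof consists of two citations. First, by \cite[Proposition~38]{lam_face_structure}, every residual stratum is cut out in the twistor embedding by some $\Pi_{\mathcal{N}}$ with $\mathcal{N}\notin P_{n,k}$. You need this as well, to pass from ``$V$ residual'' to ``$\mathcal{N}\notin P_{n,k}$'', since a stratum is only an irreducible component of $\Pi_{\mathcal{N}}\cap T_Z(\gr(k,k+2))$. Second, by \cite[Theorem~23]{lam_face_structure}, the only positroid varieties meeting $T_Z(\mathcal{A}_{n,k})$ are the $\Pi_{\mathcal{N}}$ with $\mathcal{N}\in P_{n,k}$, which is exactly your crux. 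With these in hand your Steps 1 and 2 are superfluous: a point $Y\in V\cap\mathcal{A}_{n,k}$ would give $T_Z(Y)\in\Pi_{\mathcal{N}}\cap T_Z(\mathcal{A}_{n,k})=\varnothing$.

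Neither of your routes to the crux works as stated.

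\textbf{Step 2.} Its justification is false: genericity of $Z$ does not prevent extra twistor coordinates from vanishing on a face. For $k=1$, $\mathcal{A}_{n,1,2}$ is the convex polygon with vertices $Z_1,\dots,Z_n$. Every interior point $Y$ on a diagonal $Z_iZ_j$ satisfies $\langle Y ij\rangle=0$, for every $Z$, although its face (the whole polygon) forces no vanishing. Whether $T_Z(Y)\in\Pi_{\mathcal{N}}$ is decided by the open positroid variety containing $T_Z(Y)$, not by its set of vanishing Pl\"ucker coordinates. That the relative interior of a face maps into the corresponding open positroid variety is again something you would have to import from Lam.

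\textbf{The dimension count.} It cannot work, for a structural reason. For each residual $\mathcal{N}$ the stratum is nonempty for generic $Z$. Hence the complex incidence variety $\{(Z,Y)\mid T_Z(Y)\in\Pi_{\mathcal{N}}\}$ dominates the space of matrices. The only thing that can make the bad locus small is the real condition $Y\in\mathcal{A}_{n,k}(Z)$, a positivity constraint to which dimension counting is blind.

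\textbf{The fallback.} Disjointness is a positivity phenomenon, not a genericity one; for $k=1$ it is the fact that two non-adjacent edge lines of a convex polygon meet outside it. So your sign-variation fallback points in the right direction, but as written it is only a gesture. Making it rigorous amounts to reproving \cite[Theorem~23]{lam_face_structure}, which you should instead cite.
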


\begin{proof}
   By \cite[Proposition 38]{lam_face_structure}, any residual stratum $V\in \mathcal{R}(\aA_{n,k,2})$ is isomorphic, under the twistor embedding $T_Z$, to the intersection of $T_Z(\mathrm{Gr}(k,k+2))$ with a positroid variety $\Pi_{\mathcal{N}}$ with $\mathcal{N}\notin P_{n,k}$.
    By \cite[Theorem 23]{lam_face_structure} the only positroid varieties intersecting $\mathcal{A}_{n,k}$ are of the form $\Pi_{\mathcal{N}}$ for $\mathcal{N}\in P_{n,k}$. Thus, $V\cap \aA_{n,k,2}=\varnothing$.
\end{proof}

The description of $\partial_a \mathcal{A}_{n,k}$ from \cite{lam_face_structure} allows one to study the algebro-geometric properties of the elements of $\mathcal{L}(\mathcal{A}_{n,k})$. 
For instance, the proof of \cite[Theorem 25]{lam_face_structure} shows that for generic choices of the matrix $Z$ these varieties are normal. We now use this description to show that the open face strata are smooth and that the closed face strata are varieties with rational singularities. 
The smoothness of open strata is one of the conditions of Theorem \ref{thm:fromBDtoPG}, and rational singularities allow one to keep control of the genus under birational modifications, as explained in the previous subsection.

\begin{lemma}\label{lem:faces_have_rational_singularities}
    Let $Z$ be generic. For each $W\in\mathcal{L}(\mathcal{A}_{n,k})$ let $\mathring{W}$ be the complement of $W$ to all the elements of $\mathcal{L}(\mathcal{A}_{n,k})$ strictly contained in $W$. Then $\mathring{W}$ is smooth and $W$ has rational singularities. 
\end{lemma}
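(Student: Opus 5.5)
Via the twistor embedding $T_Z$, I would reduce the statement to known facts about positroid varieties in $\gr(2,n)$ and a genericity (Kleiman--Bertini type) argument. By \cite{lam_face_structure}, every $W\in\mathcal{L}(\mathcal{A}_{n,k})$ is identified under $T_Z$ with an irreducible component of $T_Z(\grk)\cap\Pi_{\mathcal{N}}$ for some rank $2$ positroid $\mathcal{N}$. The strata strictly contained in $W$ are the intersections with the smaller positroid varieties $\Pi_{\mathcal{N}'}$, $\mathcal{N}'\subsetneq\mathcal{N}$. Hence $\mathring{W}$ is identified with a component of $T_Z(\grk)\cap\mathring{\Pi}_{\mathcal{N}}$.

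The key observation is that $T_Z(\grk)$ is a translate of a fixed subvariety. It is the Schubert-type locus of $2$-planes in $\C^n$ contained in a fixed $(n-k)$-dimensional subspace $L_Z$, the kernel of $Z^T$, i.e. a sub-Grassmannian $\gr(2,n-k)\subset\gr(2,n)$. As $Z$ varies, $L_Z$ sweeps out an open subset of $\gr(n-k,n)$, on which $GL_n$ acts transitively. I would then apply the characteristic-zero Kleiman transversality theorem, together with its refinements for singularity types, to the pair $g\cdot\gr(2,n-k)$ and $\Pi_{\mathcal{N}}$. For generic $g$, the intersection $g\cdot\gr(2,n-k)\cap\mathring{\Pi}_{\mathcal{N}}$ is smooth of the expected dimension, because both $\gr(2,n-k)$ and $\mathring{\Pi}_{\mathcal{N}}$ are smooth; open positroid varieties are smooth by \cite{Knuston2013positroid}. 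This proves smoothness of $\mathring{W}$.

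The step I expect to be the main obstacle is the genericity issue: $Z$ must have positive maximal minors, so the relevant $L_Z$ lie in a semi-algebraic set rather than a Zariski open one. However, this set has nonempty interior in $\gr(n-k,n)(\R)$, so it is Zariski dense, and a property holding on a Zariski open dense set of $L$ holds for generic positive $Z$. One must also intersect finitely many such open conditions, one for each positroid, which is harmless.

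For rational singularities, I would use that positroid varieties $\Pi_{\mathcal{N}}$ have rational singularities, since they are projections of Richardson varieties and are normal and Cohen--Macaulay with rational singularities by \cite{Knuston2013positroid}. I would then invoke the statement that a generic translate of a smooth subvariety under a transitive group action meets a variety with rational singularities in a variety with rational singularities. This is the analogue of Kleiman--Bertini for rational singularities. It can be proved by taking a resolution $\pi:\wt\Pi\to\Pi_{\mathcal{N}}$, applying generic smoothness to $\pi^{-1}(g\cdot\gr(2,n-k))$, which gives a resolution of the intersection, and using flat base change on the action map $G\times \gr(2,n-k)\to\gr(2,n)$ to transfer $R^i\pi_*\O=0$ and $\pi_*\O=\O$ to the generic fibre. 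Finally, since the intersection is normal, as in the proof of \cite[Theorem 25]{lam_face_structure}, its connected components are irreducible. So $W$, being one such component, has rational singularities.
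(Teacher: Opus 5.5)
Your proposal follows essentially the same route as the paper. You identify $W$ and $\mathring{W}$, via the twistor embedding, with a component of a sub-Grassmannian intersected with $\Pi_{\mathcal{N}}$ and $\mathring{\Pi}_{\mathcal{N}}$. You use that positroid varieties have rational singularities and open positroid varieties are smooth. You then apply Kleiman transversality and its rational-singularities analogue (the paper cites this as \cite[Theorem~2.5]{BCMP13} rather than sketching a proof), and conclude using finiteness of the positroids and Zariski density of positive $Z$.

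One correction: $T_Z(\grk)$ is $\gr(2,\operatorname{span}(Z))$, the $2$-planes inside the $(k+2)$-dimensional column span of $Z$. It is not $\gr(2,n-k)$ cut out by $\ker Z^T$: that kernel has dimension $n-k-2$, and $\dim\gr(2,n-k)\neq 2k$ in general. Your argument only uses that this subspace varies over a $GL_n$-homogeneous Grassmannian, so nothing else changes.
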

\begin{proof}
   For zero-dimensional strata there is nothing to show.
   So consider consider all strata of dimension at least one.
    By \cite[proof of Theorem~25 and Proposition~38]{lam_face_structure}, the twistor embedding identifies each closed stratum $W$ with an irreducible component of $ \Pi\cap\operatorname{Gr}(2,\operatorname{span}(Z))$, and the open stratum $\mathring{W}$ with the intersection of this component with $\mathring\Pi$ for a positroid variety $\Pi$ in $\mathrm{Gr}(2,n)$.
    Positroid varieties have rational singularities and their open counterparts are smooth \cite[Theorems~5.9 and~5.13]{Knuston2013positroid}.
    For generic $Z$, Kleiman transversality \cite[Theorem~2.5]{BilleyCoskun2012} and its rational-singularities analogue \cite[Theorem~2.5]{BCMP13} therefore imply that the closed intersections have rational singularities and the open intersections are smooth.
    Since there are finitely many positroid varieties, there is an Zariski open set of matrices $Z$ for which all conditions hold. The set of matrices $Z$ with positive maximal minors is Zariski dense in the set of all $n\times (k+m)$ matrices, so the claim~follows.
\end{proof}

\subsection{Faces have genus zero}\label{subsec:faces_genus_zero}

In this subsection we will show that every face stratum of $\mathcal{A}_{n,k}$ has genus zero.
Our strategy is as follows.
First, we will show that every face stratum is a unirational variety, which follows from the results of \cite{lam_face_structure}. Moreover, by Lemma \ref{lem:faces_have_rational_singularities} each face stratum has rational singularities. 
We then exploit the results from Subsection \ref{sec:ratsing_genus} to show that complex projective unirational varieties with rational singularities have genus zero. Finally, with a view towards Theorem \ref{thm:fromBDtoPG}, we show that any Zariski open subset of such a variety has genus zero. 

\begin{lemma} \label{lem:rationality}
    Every face stratum of $\mathcal{A}_{n,k}$ is a unirational variety. 
\end{lemma}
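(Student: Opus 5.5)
We will show that each face stratum is the image of a dominant rational map from an affine space, using the description of face strata from \cite{lam_face_structure} recalled in Subsection \ref{sec:lam}. By Proposition \ref{cor:face_and_boundary_poset_coincide}, every face stratum $W$ corresponds to a positroid $\mathcal{N}\in P_{n,k}$. Under the twistor embedding $T_Z$, $W$ is identified with an irreducible component of $\Pi_{\mathcal{N}}\cap T_Z(\gr(k,k+2))$, where $T_Z(\gr(k,k+2))=\gr(2,\operatorname{span}(Z))$ is the sub-Grassmannian of $2$-planes contained in the column span of $Z$. Moreover, $W$ is the Zariski closure of the corresponding face. By \cite[Theorems 23 and 25]{lam_face_structure}, this face is the image under $\widetilde{Z}$ of a union of positroid cells $\Pi^{>0}_{\mathcal{M}}$ of $\gr(k,n)_{\geq 0}$. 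Equivalently, the face is the closure of $\widetilde{Z}(\Pi^{>0}_{\mathcal{M}})$ for a suitable positroid $\mathcal{M}$ of full dimension in the face.

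The plan is therefore to write $W=\overline{\widetilde Z(\Pi_{\mathcal{M}})}$ for one positroid variety $\Pi_{\mathcal{M}}\subset\gr(k,n)$ whose cell maps onto an open subset of the face. We then combine two facts. First, positroid varieties are rational: the open positroid variety $\mathring{\Pi}_{\mathcal{M}}$ contains a dense torus, since Postnikov's boundary measurement (or network) parametrization gives a birational map $(\C^*)^{\dim}\dashrightarrow \Pi_{\mathcal{M}}$ \cite{postnikov}. Second, $\widetilde Z$ is a rational map which is regular on the cell. Composing the two gives a rational map
\[
(\C^*)^{\dim \Pi_{\mathcal{M}}}\dashrightarrow \Pi_{\mathcal{M}}\dashrightarrow W .
\]
Its image contains the real semi-algebraic set $\widetilde Z(\Pi^{>0}_{\mathcal{M}})$. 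This set has the same dimension as $W$ and is Zariski dense in $W$, because $W$ is irreducible and is the Zariski closure of the face. The composite map is thus dominant, and $W$ is unirational. The facets themselves, and more generally faces coming from several cells, are handled in the same way by choosing one cell of the correct dimension. The closure of its image is irreducible of dimension $\dim W$ and lies inside $W$, so it equals $W$.

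The main obstacle is the bookkeeping in \cite{lam_face_structure}: one must pin down that each face stratum is indeed the closure of the image of a single positroid cell, rather than only a union of images. If this is not explicit there, an alternative is available. Work directly inside $\gr(2,\operatorname{span}(Z))$: for generic $Z$, an irreducible component of $\Pi_{\mathcal{N}}\cap\gr(2,k+2)$ can be parametrized by the pairs of vectors in $\operatorname{span}(Z)$ satisfying the rank conditions defining $\Pi_{\mathcal{N}}$. These are cyclic-interval rank conditions on the projections to coordinate subsets, and they become linear conditions once one vector is fixed. This yields a tower of linear fibrations over a rational base, and hence unirationality, even rationality. Once the reduction is in place, the remaining steps (density and dominance) are routine.
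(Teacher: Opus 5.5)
Your proposal is correct and follows the same route as the paper. The paper likewise takes from the proof of \cite[Theorem 23]{lam_face_structure} that each face stratum is the Zariski closure of the image of a positroid variety under a rational map, and then uses an explicit rational parametrization of positroid varieties (it cites \cite{posparam}) to conclude unirationality.

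Your irreducibility argument, which picks a single cell whose image is full-dimensional in $W$, already resolves the ``main obstacle'' you raise. The sketched fallback via linear fibrations, whose base you do not show to be rational, is therefore unnecessary. Note also that you only need the network parametrization to be dominant, not birational, and dominance holds because its image contains the Zariski-dense cell $\Pi^{>0}_{\mathcal{M}}$.
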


\begin{proof} 
It follows from the proof of \cite[Theorem 23]{lam_face_structure} that each face stratum of $\mathcal{A}_{n,k}$ is the closure of the image of a positroid variety under a rational map.
Positroid varieties admit an explicit rational parametrization, see, e.g., \cite{posparam}. Thus, every face stratum is also unirational.
\end{proof}

\begin{proposition}
\label{prop:faceshavegenuszero}
    A unirational complex projective variety $X$ with rational singularities has genus zero. If $d:=\dim X>0$, then moreover $H^d(X,\mathcal{O}_X)=0$. In particular, for a generic choice of the matrix $Z$ every face stratum $V$ of $\mathcal{A}_{n,k}$ has genus zero. If $d = \dim(V)>0$, then we also have $H^{d}(V,\cO_V) = 0$.
\end{proposition}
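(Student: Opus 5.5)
The plan is to reduce everything to the vanishing of $H^d(X,\mathcal{O}_X)$ and then invoke Lemma \ref{dim_sum_hodge_numbers}. Once $H^d(X,\mathcal{O}_X)=0$ is known, that lemma gives $g(X)+\crank(X)\leq \dim H^d(X,\mathcal{O}_X)=0$. Since both summands are non-negative, we get $g(X)=0$ (and also $\crank(X)=0$). In the case $d=0$, $X$ is a point, so $H_0$ is pure of type $(0,0)$; hence $h^{-p,0}=0$ for $p>0$ trivially and $g(X)=0$. So assume $d>0$.

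To show $H^d(X,\mathcal{O}_X)=0$, choose a resolution of singularities $f:Y\to X$ with $Y$ smooth projective. Because $X$ has rational singularities, Lemma \ref{lem:surj} gives $H^d(X,\mathcal{O}_X)\cong H^d(Y,\mathcal{O}_Y)$. By Hodge symmetry and Serre duality on the smooth projective $Y$,
\[
\dim H^d(Y,\mathcal{O}_Y)=h^{0,d}(Y)=h^{d,0}(Y)=\dim H^0(Y,\Omega^d_Y).
\]
Since $Y$ is birational to $X$, it is unirational. So there is a dominant rational map $\phi:\P^d\dashrightarrow Y$; after replacing $\P^d$ by a suitable subvariety of a larger projective space, or by taking a generically finite dominant map from a rational variety, we may assume the source has dimension $d$. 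We resolve the indeterminacy of $\phi$ to obtain a smooth projective rational $\widetilde{P}$ with a generically finite dominant morphism $\psi:\widetilde P\to Y$. Pullback of top forms along a generically finite dominant morphism of smooth varieties is injective, because $\psi$ is étale on a dense open set. Hence $H^0(Y,\Omega^d_Y)\hookrightarrow H^0(\widetilde P,\Omega^d_{\widetilde P})$. The latter vanishes because $\widetilde P$ is smooth rational and $h^{d,0}$ is a birational invariant of smooth projective varieties, equal to $h^{d,0}(\P^d)=0$. This gives $H^d(X,\mathcal{O}_X)=0$.

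For the application to the amplituhedron, let $V$ be a face stratum of $\mathcal{A}_{n,k}$ and take $Z$ generic. By Lemma \ref{lem:rationality}, $V$ is unirational, and by Lemma \ref{lem:faces_have_rational_singularities}, $V$ has rational singularities. The general statement above then applies verbatim.

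The main technical point is the reduction to an equidimensional dominant map from a rational variety. Unirationality typically provides a dominant map from $\P^N$ with $N\geq d$. I would cut $\P^N$ by a general linear subspace of dimension $d$; its restriction remains dominant, hence generically finite. This is standard, so I expect no serious obstacle. The remaining care is only in checking that Lemma \ref{dim_sum_hodge_numbers} is applied with $H^d(X)$ identified with the top-degree Hodge structure defining $g(X)$, which is its convention.
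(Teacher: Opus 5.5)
Your proposal is correct and follows the paper's proof: you pass to a resolution via Lemma~\ref{lem:surj}, show $H^d$ of the structure sheaf vanishes on the smooth unirational model, and conclude with Lemma~\ref{dim_sum_hodge_numbers}, treating $d=0$ separately. The only difference is that the paper cites \cite[Corollary 1.56]{KollarSmithCorti2004} for $H^d(\widetilde V,\mathcal{O}_{\widetilde V})=0$. You instead prove this vanishing directly, using Serre duality (or Hodge symmetry) and the injectivity of pulling back top forms along a generically finite dominant morphism from a smooth projective rational variety.
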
 

\begin{proof}
By Lemma \ref{lem:rationality} every face stratum $V$ of $\mathcal{A}_{n,k}$ is a unirational variety. By Lemma \ref{lem:faces_have_rational_singularities} $V$ has rational singularities. Let $\wt{V}$ be a resolution of singularities of $V$ and $d:=\dim V$. We then get the following sequence of isomorphisms for $d>0$: 
$$0=H^d(\mathbb{P}^d, \mathcal{O}_{\mathbb{P}^d}) \cong H^d(\wt{V}, \mathcal{O}_{\wt{V}}) \cong H^d(V, \mathcal{O}_V).$$
The first isomorphism is \cite[Corollary 1.56]{KollarSmithCorti2004}.
The second isomorphism is Lemma~\ref{lem:surj}.
For $d>0$ the claim then follows from Lemma~\ref{dim_sum_hodge_numbers}.
If $d=0$, then $V$ is a point and has genus zero. Since we only used that $V$ is projective, unirational and has rational singularities, the claim holds for any complex variety with such properties.
\end{proof}

\begin{remark}
    Note that $V$ having rational singularities is essential for Proposition \ref{prop:faceshavegenuszero} to hold. In general, the genus of a variety can be higher than that of the resolution of singularities, see, e.g., \cite[Example 3.6]{brown-dupont}.
\end{remark}

\begin{lemma} \label{lem:genuszero_open}
    Let $X$ be an irreducible unirational complex projective variety of dimension $n>0$  with rational singularities.
    Then, for any Zariski open subvariety $U\subset X$ we have $\grf^0 H^n(U)=0$.
    In particular, its genus vanishes i.e. $g(U)=0$.
\end{lemma}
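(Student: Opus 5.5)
Write $Z:=X\setminus U$, a closed subvariety of $X$, and set $n=\dim X$. Our plan is to compare $H^n(U)$ with $H^n(X)$ through the long exact sequence of the pair together with the exactness of the functor $\grf^0$ on mixed Hodge structures. Proposition~\ref{prop:faceshavegenuszero} and Lemma~\ref{dim_sum_hodge_numbers} give $\grf^0H^n(X)_\C=0$, because $\dim \grf^0 H^n(X)_\C\le \dim H^n(X,\O_X)=0$. Since $g(U)$ is a sum of Hodge numbers $h^{p,q}$ with $p\le 0$ (one of the indices of the outer terms vanishes, and by symmetry we may count these as $h^{0,q}$), all such numbers are dimensions of subquotients of $\grf^0H^n(U)_\C$. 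Hence the claim $g(U)=0$ follows once we show $\grf^0H^n(U)=0$.

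We would use the long exact sequence of mixed Hodge structures
\[
H^n_c(U)\to H^n(X)\to H^n(Z)\to H^{n+1}_c(U)
\]
only indirectly. Instead we work with ordinary cohomology of $U$ and a compactification. Choose a resolution $\pi:\wt X\to X$ that is an isomorphism over the smooth locus of $X$, such that the preimage of $Z\cup X_{\mathrm{sing}}$ becomes a simple normal crossing divisor $E$ after further blowups. The open set $\wt U:=\pi^{-1}(U)$ contains the open dense set $\wt X\setminus E$. Deligne's theory says $F^pH^n(\wt X\setminus E)_\C$ is computed by the log de~Rham complex, so that
\[
\grf^0H^n(\wt X\setminus E)_\C\cong H^n(\wt X,\O_{\wt X}).
\]
This is the standard formula $\grf^0 H^k(\wt X\setminus E)=H^k(\wt X,\Omega^0_{\wt X}(\log E))=H^k(\wt X,\O_{\wt X})$. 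By \cite[Corollary 1.56]{KollarSmithCorti2004}, unirationality gives $H^n(\wt X,\O_{\wt X})=H^n(\P^n,\O)=0$, as in the proof of Proposition~\ref{prop:faceshavegenuszero}. Thus $\grf^0H^n(\wt X\setminus E)=0$.

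It then remains to pass from $\wt X\setminus E$ back to $U$. The first step is to write $U':=U\cap X_{\mathrm{sm}}\setminus(\text{image of }E)$, an open subset of $U$ isomorphic to $\wt X\setminus E$. The second step uses the restriction map $H^n(U)\to H^n(U')$, a morphism of mixed Hodge structures. Its kernel is controlled by the Gysin sequence, i.e.\ by the local cohomology $H^n_{U\setminus U'}(U)$. Because $U\setminus U'$ has codimension at least one, we expect this local cohomology to have weights and Hodge types with $F$-level at least $1$; concretely, the Thom/Gysin isomorphism introduces Tate twists $(-c)$ with $c\ge1$, and twisting shifts $F$ up by $c$. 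We therefore expect $\grf^0$ of the kernel to vanish, and since $\grf^0$ is exact this gives the injection $\grf^0H^n(U)\hookrightarrow \grf^0H^n(U')=0$.

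We expect this last step to be the main obstacle. Because $U$ is singular, local cohomology supported on singular strata need not be a clean Tate twist, so the argument above does not apply directly. Our first approach would be to avoid it entirely: take the proper map $\pi_U:\wt U\to U$ and use the fact that rational singularities are Du Bois (\cite[Theorem S]{rationaldubois}). From this we would argue that $\grf^0H^n(U)\to \grf^0H^n(\wt U)$ is injective, or an isomorphism, via $\underline{\Omega}^0_U\simeq\O_U\simeq R\pi_*\O_{\wt U}$. On the smooth variety $\wt U$, the Gysin argument with Tate twists is clean, and it reduces the problem to $\wt X\setminus E$ as above.
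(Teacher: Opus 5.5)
There is a genuine gap, and it sits exactly at the step you flag yourself. The injectivity of $\grf^0H^n(U)\to\grf^0H^n(\wt U)$ is where the rational-singularities hypothesis has to do its work, and you only assert it. The justification you offer, $\underline{\Omega}^0_U\simeq\O_U\simeq R\pi_*\O_{\wt U}$, is a statement about sheaves on the non-proper variety $U$. But $\grf^0H^n(U)$ is not $\mathbb{H}^n(U,\underline{\Omega}^0_U)$: it is computed on a compactification, and it depends on how the exceptional locus over $\Sigma_U:=U\cap\sing X$ meets the boundary $\pi^{-1}(Z)$. In the application, $Z$ is the residual arrangement and does contain singular strata.

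To see what is really needed, use the Mayer--Vietoris sequence of mixed Hodge structures for the square formed by $E_U:=\pi^{-1}(\Sigma_U)\to\wt U$ and $\Sigma_U\to U$. Since $\dim\Sigma_U\le n-2$, Deligne's bounds give $\grf^0H^{n-1}(\Sigma_U)=\grf^0H^{n}(\Sigma_U)=0$, hence
\[
\ker\bigl(\grf^0H^n(U)\to\grf^0H^n(\wt U)\bigr)\cong\operatorname{coker}\bigl(\grf^0H^{n-1}(\wt U)\to\grf^0H^{n-1}(E_U)\bigr).
\]
Suppose $\pi^{-1}(Z)\cup\pi^{-1}(\sing X)$ is simple normal crossing and let $E'$ be the closure of $E_U$. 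Then this cokernel is the cokernel of $H^{n-1}(\wt X,\O_{\wt X})\to H^{n-1}(E',\O_{E'})$. By the sequence $0\to\O_{\wt X}(-E')\to\O_{\wt X}\to\O_{E'}\to 0$ and Serre duality, it is dual to $H^0(\wt X,\omega_{\wt X}(E'))/H^0(\wt X,\omega_{\wt X})$. So your injectivity claim is equivalent to saying that top forms on $\wt X$ with log poles along the exceptional divisors over $U$ have no poles at all. That is a global statement, and nothing in your sketch addresses it.

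This is precisely the heart of the paper's proof, which reaches it by a different path. It dualizes to $H_n(U)$ and replaces $U$ by the pair $(U,\sing X\setminus Z)$, which is harmless in codimension two. Modification invariance, duality on the good compactification of Construction~\ref{const:good_cmp} and a long exact sequence then give $\grf^0H_n(U)_\C\cong\grf^nH^n(\mathcal{X}\setminus\mathcal{D})_\C= H^0(\mathcal{X},\omega_{\mathcal{X}}(\mathcal{D}))$, where $\mathcal{D}$ plays the role of your $E'$. The paper then shows that the inclusions $\bar f_*\omega_{\mathcal{X}}\into\bar f_*\omega_{\mathcal{X}}(\mathcal{D})\into j_*\omega_{X_{\mathrm{reg}}}$ are equalities. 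This holds because $X$ is normal with rational singularities, so $\bar f_*\omega_{\mathcal{X}}=\dsheaf_X=j_*\omega_{X_{\mathrm{reg}}}$. Unirationality then kills $H^0(\mathcal{X},\omega_{\mathcal{X}})$.

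Your route can be repaired with the same input, which gives $H^0(\omega_{\wt X}(E'))=H^0(\omega_{\wt X})$. Alternatively, use the Du Bois property globally on the projective $X$ rather than on $U$:
\begin{enumerate}
\item Choose $\pi$ to be an isomorphism outside a closed set $\Sigma\supseteq\sing X$ of dimension at most $n-2$, and put $E:=\pi^{-1}(\Sigma)$.
\item Rationality and the Du Bois triangle give $R\pi_*\O_E\simeq\underline{\Omega}^0_\Sigma$, so $H^{n-1}(E,\O_E)\cong\grf^0H^{n-1}(\Sigma)=0$.
\item Since $E'\subseteq E$ and $\dim E=n-1$, $H^{n-1}(E',\O_{E'})$ is a quotient of this group, hence zero.
\end{enumerate}
Either way, the argument must be carried out on $X$ and must handle the exceptional components lying over $\sing X\cap Z$. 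A quasi-isomorphism on $U$ alone does not suffice.
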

\begin{proof}
    We use an analog of Construction~\ref{const:good_cmp} with $R\coloneqq X\setminus U$, and $D\coloneqq\sing X$, where $D$ is now of codimension at least $2$ in $X$.
    Write $\bar f\coloneqq \eta\circ\pi:\mathcal{X}\to X$ for the composition  $\eta\circ \pi$, whose restriction to $\mathcal{X}\setminus\mathcal{B}$, with target $X\setminus R$, is the modification $f$ from Construction \ref{const:good_cmp}.

    First, note that
    \[
        \grf^0H_n(U)_\C
        \cong \grf^0H_n(U,D\setminus R)_\C,
    \]
    since $D\setminus R$ has codimension at least two in $U$,
    by \cite[proof of Proposition 3.9]{brown-dupont}.
    Moreover, $(\mathcal{X},\mathcal{D})$ is a good compactification
    of $(\mathcal{X}\setminus\mathcal{B}, \mathcal{D}\setminus\mathcal{B})$, which is a modification of $(U,D\setminus R)$.
    In particular, by \cite[Propositions 1.3 and 3.18]{brown-dupont}, we get
    \[
        \grf^0H_n(U)_\C \cong
        \grf^nH^n( \mathcal{X}\setminus\mathcal{D}, \mathcal{B}\setminus\mathcal{D})_\C.
    \]
    Next, consider the long exact sequence in relative cohomology
    \[
        \cdots
        \to H^{n-1}(\mathcal{B}\setminus\mathcal{D}) 
        \to H^n(\mathcal{X}\setminus\mathcal{D}, \mathcal{B}\setminus\mathcal{D})
        \to H^n(\mathcal{X}\setminus\mathcal{D})
        \to H^n(\mathcal{B}\setminus\mathcal{D})
        \to \cdots.
    \]
    After applying the exact functor $\grf^n$, the outermost terms
    vanish by the Hodge bounds and the inequality
    $\dim(\mathcal{B}\setminus\mathcal{D})<n$;
    see, for example,
    \cite[Théorème 8.2.4 (i), (ii)]{Deligne1974}
    or \cite[\S 1.2.2]{brown-dupont}.
    Therefore, we obtain an isomorphism
    \[
    \grf^nH^n(\mathcal{X}\setminus\mathcal{D}, \mathcal{B}\setminus\mathcal{D})_\C \cong \grf^nH^n(\mathcal{X}\setminus\mathcal{D})_\C.
    \]
    The latter is naturally identified with  $H^0(\mathcal{X},\omega_{\mathcal{X}}(\mathcal{D}))$
    by \cite[Eq.~(20)]{brown-dupont}, since $\mathcal{D}$ is a simple
    normal crossing divisor.
    
    We now show that $H^0(\mathcal{X},\omega_{\mathcal{X}}(\mathcal{D}))$ vanishes as follows.
    Since $\mathcal{D}$ is effective, we have an inclusion of sheaves $\omega_{\mathcal{X}} \into\omega_{\mathcal{X}}(\mathcal{D})$ on $\mathcal{X}$.
    By left-exactness of the pushforward, this gives an inclusion $\bar f_*\omega_{\mathcal{X}} \into \bar f_*\omega_{\mathcal{X}}(\mathcal{D})$ of sheaves on $X$.
    Write $j:X_{\mathrm{reg}}\hookrightarrow X$ for the open embedding of the regular locus.
    The restriction of $\bar f_*\omega_{\mathcal{X}}(\mathcal{D})$ to $X_{\mathrm{reg}}$ is $\omega_{X_{\mathrm{reg}}}$, because $\mathcal{D}$ is disjoint from $\bar f^{-1}(X_{\mathrm{reg}})$ and $\bar f_*\omega_{\mathcal{X}}=\dsheaf_X$ since $X$ has rational singularities.
    Therefore, we have
    \[
        j^*\bar f_*\omega_{\mathcal{X}}(\mathcal{D}) = j^*\bar f_*\omega_{\mathcal{X}} \cong \omega_{X_{\mathrm{reg}}}.
    \]
    This gives the inclusion
    \[
        \bar f_*\omega_{\mathcal{X}}(\mathcal{D}) \into j_*j^*\bar f_*\omega_{\mathcal{X}}(\mathcal{D}) \cong j_*\omega_{X_{\mathrm{reg}}},
    \]
    because $\bar f_*\omega_{\mathcal{X}}(\mathcal{D})$ is torsion-free.
    Thus we have established the following relations:
    \[
        \bar f_*\omega_{\mathcal{X}}
        \into \bar f_*\omega_{\mathcal{X}}(\mathcal{D})
        \into j_*\omega_{X_{\mathrm{reg}}}.
    \]
    Finally, the composition of both maps is the natural map, i.e., the unique map restricting to the identity on $X_{\mathrm{reg}}$.
    Both ends of this chain are equal to the dualizing sheaf of $X$.
    First, on the left, the assumption that $X$ has rational singularities implies $\bar f_*\omega_{\mathcal{X}}=\dsheaf_X$.
    Moreover, since $X$ is normal (see Remark~\ref{rem:rational_sing_implies_normal}), the dualizing sheaf of $X$ is equal to the pushforward $\dsheaf_X=j_*\omega_{X_{\mathrm{reg}}}$, under the inclusion of the canonical sheaf on the regular locus; see~\cite[Proposition 1.6]{HartshorneReflexive}, see also \cite[Eq. (5.3)]{Kovacs2013Stable}.
    So the composition of the two injections in the above equation gives an isomorphism, which implies that all sheaves on $X$ in the above equation are equal.
     Thus the global sections of $\omega_{\mathcal{X}}(\mathcal{D})$ and $\omega_{\mathcal{X}}$ must be the same, i.e. 
    \begin{align}
        H^0(\mathcal{X}, \omega_{\mathcal{X}}) = H^0(X, \bar f_*\omega_{\mathcal{X}})  = H^0(X, \bar f_*\omega_{\mathcal{X}}(\mathcal{D})) = H^0(\mathcal{X}, \omega_{\mathcal{X}}(\mathcal{D})).
    \end{align}
    Since $\bar f:\mathcal{X}\rightarrow X$ is birational, $\mathcal{X}$ is also unirational.
    Therefore,
    \[
        H^0(\mathcal{X},\omega_{\mathcal{X}})\hookrightarrow H^0(\PP^n,\omega_{\PP^n})=0,
    \]
    by \cite[Corollary 1.56]{KollarSmithCorti2004}.
    Thus we have
    \[
        0 = H^0(\mathcal{X},\omega_{\mathcal{X}})=H^0(\mathcal{X}, \omega_{\mathcal{X}}(\mathcal{D})) \cong \grf^nH^n(\mathcal{X}\setminus \mathcal{D})_\C\cong \grf^0 H_n(U)_\C,
    \]
    which after taking duals shows the first claim $\grf^0H^n(U)=0$.
    The remaining claim follows from the definition of the genus.
\end{proof}

\section{The $m=2$ amplituhedron is a positive geometry} \label{sec:last}
In this section we achieve our main goal and show that for generic choices of $Z$ the pair $(\gr(k,k+2), \mathcal{A}_{n,k,2}(Z))$ is a positive geometry. We start with the following auxiliary statement. It is a generalization of \cite[Corollary~3.14]{brown-dupont}, which covers the case of simple normal crossing divisors. It first appeared in the PhD thesis of the first author \cite{thesis}. 
Recall that $\mathcal{L}(D)$ denotes the algebraic stratification of a subvariety $D$ of a complex variety $X$. 

\begin{proposition}\label{prop:nonSNC_genus_bound}
    Let $X$ be a complex variety, and $D\subset X$ a codimension one subvariety.
    Suppose that $g(V)=0$ for all $V\in \cL(D)$.
    Then, the genus of the pair $(X,D)$ vanishes, i.e.~$g(X,D)=0.$
\end{proposition}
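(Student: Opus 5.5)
Write $n=\dim X$. The plan is to compute $\grf^0$ of $H_n(X,D)$ through the spectral sequence (equivalently, iterated long exact sequences) attached to the filtration of $D$ by the strata of $\cL(D)$. Then we use exactness of $\grf^p$ on mixed Hodge structures to bound the outer Hodge numbers $h^{-p,0}$, $p>0$, by those of the strata. This mirrors the proof of \cite[Corollary~3.14]{brown-dupont} for simple normal crossing divisors. There, $H_n(X,D)$ is computed by a Mayer--Vietoris type spectral sequence whose $E^1$-terms are homologies $H_{n-|I|}(D_I)$ of intersections. The new issue is that intersections of components of $D$ are neither smooth nor of the expected dimension. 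We therefore replace the intersection poset by $\cL(D)$, which is closed under taking irreducible components of intersections.

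\textbf{Step 1: reduction to strata.} Dually, $g(X,D)=\sum_{p>0}h^{p,n}$ of $H^n(X,D)$, more precisely of its outer part $\grw_{p+n}$ with Hodge type $(p+n,0)$ up to the sign conventions of \cite{brown-dupont}; it suffices to show that the relevant graded piece vanishes. I filter the pair by closed unions $D=D^{(n-1)}\supset D^{(n-2)}\supset\cdots\supset D^{(0)}$, where $D^{(j)}$ is the union of strata in $\cL(D)$ of dimension $\le j$. The long exact sequences of the triples $(X,D^{(j)},D^{(j-1)})$ give exact sequences of mixed Hodge structures
\[
H_n(X)\to H_n(X,D)\to H_{n-1}(D)\to\cdots,
\]
and, inductively, $H_{j}(D^{(j)},D^{(j-1)})$ appears. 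By excision, $H_j(D^{(j)},D^{(j-1)})\cong\bigoplus_{V}H_j(V,V\cap D^{(j-1)})$, summing over $j$-dimensional strata $V$, because distinct $j$-dimensional strata meet inside $D^{(j-1)}$ by the intersection-closure property of $\cL(D)$.

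\textbf{Step 2: outer Hodge numbers.} Since $\grf^p\grw$ is exact, $h^{-p,0}(H_n(X,D))$ is bounded by $h^{-p,0}(H_n(X))$ plus the corresponding outer numbers of the pieces that arise in the induction. A term $H_j(V,\partial V)$ with $j<n$ contributes to weight $-n-p$ only through classes of Hodge type $(-p,-n)$ or similar. By the Hodge bounds for a $j$-dimensional variety (\cite[Théorème 8.2.4]{Deligne1974}, \cite[\S1.2.2]{brown-dupont}), such types can only be outer of the form $(-p',0)$ relative to $j$, in the appropriate shifted sense (the boundary map lowers degree by one and is compatible with the Tate twist conventions). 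I will track this carefully so that the only possible outer contributions of a stratum $V$ of dimension $j$ come from $h^{-p,0}(H_j(V))$, i.e. from $g(V)$. This uses the fact that $H_j(V,V\cap D^{(j-1)})$ and $H_j(V)$ have the same outer part, since the pieces $H_{j}(V\cap D^{(j-1)})$ and $H_{j-1}(V\cap D^{(j-1)})$ live on varieties of dimension $<j$ and so have no outer part in degree $j$. This is the same argument as the codimension-two insensitivity \cite[Proposition 3.9]{brown-dupont}.

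\textbf{Step 3: conclude.} Summing, $g(X,D)\le g(X)+\sum_{V\in\cL(D)}g(V)=0$, because $X\in\cL(D)$ by definition. The main obstacle I anticipate is Step 2: bookkeeping the weights through the inductive long exact sequences so that only genuinely outer terms of the strata propagate. In particular, the degree shift by the connecting maps must not turn an inner Hodge number of a stratum into an outer one for the pair. I would handle this first in the simplest case of two components, with $(X,D_1\cup D_2)$ and stratum $D_1\cap D_2$, and then induct on the number of strata using the triple sequences above.
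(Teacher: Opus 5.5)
There is a genuine gap, and it sits in Step 2. You claim that $H_j(V,V\cap D^{(j-1)})$ and $H_j(V)$ have the same outer part because $V\cap D^{(j-1)}$ has dimension $<j$. Only half of this holds.

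By \cite[Th\'eor\`eme 8.2.4 (ii)]{Deligne1974}, $H_j(V\cap D^{(j-1)})$ indeed has no classes of type $(-p,0)$. But $H_{j-1}(V\cap D^{(j-1)})$ is the \emph{top} homology of a $(j-1)$-dimensional variety, so it can carry such classes. The connecting map $H_j(V,V\cap D^{(j-1)})\to H_{j-1}(V\cap D^{(j-1)})$ is a morphism of mixed Hodge structures with no Tate twist. Hence outer classes of the boundary are outer classes of the pair.

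A concrete example: take two smooth quadrics $Q_1,Q_2\subset\P^3$ meeting in an elliptic curve $E$. The sequence $0=H^1(Q_1)\to H^1(E)\to H^2(Q_1,E)$ gives $g(Q_1,E)\geq 1$, although $g(Q_1)=0$. Likewise, $g(\P^3,Q_1\cup Q_2)=1$ while $g(\P^3)+g(Q_1)+g(Q_2)=0$. Citing \cite[Proposition 3.9]{brown-dupont} does not rescue the step: it only discards pieces of codimension at least two, whereas $V\cap D^{(j-1)}$ is typically a divisor in $V$. (As a minor point, in Step 1 the genus in cohomological terms is $\sum_{p>0}h^{p,0}(H^n(X,D))$. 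These are classes of weight $p$, not of type $(p+n,0)$.)

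This is not mere bookkeeping: it is the only place where lower-dimensional strata enter at all. In your skeleton filtration, the terms $H_{n-1}(D^{(j)},D^{(j-1)})$ with $j<n-1$ have no outer classes, by the same Hodge bounds. So the only contribution in degree $n-1$ is
\[
H_{n-1}(D,D^{(n-2)})\cong\bigoplus_i H_{n-1}(D_i,D_i\cap D^{(n-2)}).
\]
Taken literally, Step 2 would therefore give $g(X,D)\le g(X)+\sum_i g(D_i)$, which the quadric example refutes. Your Step 3 sum over all of $\cL(D)$ does not follow from what Step 2 actually shows.

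The repair is to keep the pair and recurse. First,
\[
g(V,V\cap D^{(j-1)})\le g(V)+\sum_{p>0}h^{-p,0}\bigl(H_{j-1}(V\cap D^{(j-1)})\bigr).
\]
Then apply your Hodge-bound and excision argument to $V\cap D^{(j-1)}$. This is a union of strata of dimension at most $j-1$, all lying in $\cL(D)$ by closure under intersections. Iterate down to points. This yields $g(X,D)\le\sum_{V\in\cL(D)}m_V\,g(V)$ with multiplicities $m_V$. That is exactly the recursion the paper runs, using \cite[Corollary 3.13]{brown-dupont} for the pair step and \cite[Proposition 3.9]{brown-dupont} to drop codimension-two components at each stage.
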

\begin{proof}
    We use the bound provided by \cite[Corollary 3.13]{brown-dupont} recursively by dimension. 
    We start in dimension $n=\dim(X)$ with $g(X,D)$, where this result gives the inequality
    $g(X,D) \leq g(X) + \sum_i g(D_i,D_i\cap \bigcup_{j\neq i}D_j)$.
    For an irreducible component $V\subset D_i\cap \bigcup_{j\neq i}D_j$ that has codimension at least two in $D_i$,  \cite[Proposition 3.9]{brown-dupont} tells us that $g(D_i,D_i\cap \bigcup_{j\neq i}D_j)$ equals $g(D_i,(D_i\cap \bigcup_{j\neq i}D_j)\setminus (V\setminus \bigcup_{j}D_i\cap D_j))$. 
    So we can replace the subset $D_i\cap \bigcup_{j\neq i} D_j$ of $D_i$ by the union of all its irreducible components $D_i^{(j)}$ of codimension one in $D_i$. 
    Then, \cite[Corollary 3.13]{brown-dupont} implies the inequality 
    \[ 
        g\left(D_i,\bigcup_{j}D_i^{(j)} \right) \leq g(D_i)+ \sum_{j}g\left(D_i^{(j)},D_i^{(j)}\cap \bigcup_{k\neq j}D_i^{(k)}\right). 
    \]
    Now, since the dimension of $D_i$ is $n-1$, we can apply the bound of  \cite[Corollary 3.13]{brown-dupont} to all new pairs $(D_i^{(j)},D_i^{(j)}\cap \bigcup_{k\neq j}D_i^{(k)})$, which eventually gives an inequality
    \[
        g(X,D) \leq \sum_{V\in \cL(D)} m_V g(V), 
    \]
    where $m_V$ are non-negative integers counting how often the same variety $V$ appears in the recursion.
    Since the algebraic stratification $\cL(D)$ is closed under taking intersections by construction, all $V$ that appear in the recursion are contained in $\cL(D)$.
    Hence, $g(V)=0$, and all terms in the sum above vanish, showing the claim.
\end{proof}

Therefore, we have an immediate consequence for recursive genus zero pairs.
\begin{corollary}\label{cor:all_genera_zero_implies_recursive}
    Let $X$ be a complex variety, and $D\subset X$ a codimension one subvariety.
    Suppose that $g(W)=0$ for all $W\in {\mathcal{L}}(D)$.
    Then $(X,D)$ is a recursive genus zero pair.
\end{corollary}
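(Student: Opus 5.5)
The plan is to apply Proposition \ref{prop:nonSNC_genus_bound} to each stratum separately. By Definition \ref{def:recursive_genus_zero}, we must show $g(W,D_W)=0$ for every $W\in\cL(D)$, where $D_W\subset W$ is the union of all codimension one subvarieties of $W$ that lie in $\cL(D)$. I fix such a $W$ and regard $W$ as the ambient complex variety, with $D_W$ a codimension one subvariety of $W$. Once I know that $g(V)=0$ for every $V\in\cL(D_W)$, Proposition \ref{prop:nonSNC_genus_bound} applied to the pair $(W,D_W)$ gives $g(W,D_W)=0$. Doing this for all $W$ yields the corollary. The case $W=X$ is exactly Proposition \ref{prop:nonSNC_genus_bound} itself, since $D_X=D$ when $D$ is taken to be of pure codimension one.

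The key step is the inclusion $\cL(D_W)\subseteq\cL(D)$. I would verify it in two parts.

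\textbf{First part.} The ambient variety $W$ lies in $\cL(D)$ by assumption. Each irreducible component of $D_W$ is, by definition of $D_W$, an element of $\cL(D)$.

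\textbf{Second part.} Take finitely many elements of $\cL(D_W)$ that are already known to lie in $\cL(D)$ and whose intersection is non-empty. By property 2 of Definition \ref{def:alg_bd_strat} for $\cL(D)$, every irreducible component of this intersection lies in $\cL(D)$. Since $\cL(D_W)$ is the smallest poset closed under these operations, minimality gives $\cL(D_W)\subseteq\cL(D)$.

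It then follows that $g(V)=0$ for every $V\in\cL(D_W)$, directly from the hypothesis.

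The only point needing care is whether $D_W$ is really of pure codimension one in $W$, as Proposition \ref{prop:nonSNC_genus_bound} requires. By construction $D_W$ is a union of codimension one subvarieties of $W$, so this holds. If $D_W$ happens to be empty, the genus of $(W,\varnothing)$ is $g(W)$, which vanishes by hypothesis.

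I expect no serious obstacle here. The main subtlety is a conventional one: if the definition of a recursive genus zero pair is read with $D_W$ allowed to contain components of higher codimension, one would first discard them using \cite[Proposition 3.9]{brown-dupont}, which says the genus is insensitive to codimension two subvarieties, and then run the argument above.
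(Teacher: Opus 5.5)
Your proposal is correct and follows the same route as the paper: apply Proposition~\ref{prop:nonSNC_genus_bound} to each pair $(W,D_W)$, using that $\cL(D)$ is closed under taking irreducible components of intersections. Your minimality argument for $\cL(D_W)\subseteq\cL(D)$ is the detail that the paper's one-line proof leaves implicit.
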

\begin{proof}
    Since ${\mathcal{L}}(D)$ is closed under intersections, Proposition \ref{prop:nonSNC_genus_bound} applies to any pair $(W,D_W)$, where $D_W$ is the union of all elements of $\mathcal{L}(D)$ that are codimension one subsets of~$W$.
\end{proof}

We are now ready to prove our main result. In its statement, the matrix $Z$ being generic means that it lies outside a proper Zariski-closed subset of $n\times (k+2)$ matrices intersected with the set of matrices with positive maximal minors.

\begin{theorem} \label{thm:ampli_pos_geo}
    Let $Z$ be a generic $n\times (k+2)$ matrix with positive maximal minors.
    Then $(\gr(k,k+2), \mathcal{A}_{n,k,2}(Z))$ is a positive geometry. 
\end{theorem}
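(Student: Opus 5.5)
The plan is to apply Theorem~\ref{thm:fromBDtoPG} with $X=\gr(k,k+2)$, $S=\mathcal{A}_{n,k,2}(Z)$, $D=\partial_a S$ and $R=\mathcal{R}(S)$, checking its hypotheses one at a time. The routine hypotheses come first.
\begin{itemize}
\item \emph{Setting.} $\gr(k,k+2)$ is a smooth real projective variety. The amplituhedron is a non-empty compact regular semi-algebraic set; its interior is orientable because it is an open subset of the orientable manifold $\gr(k,k+2)(\R)$, or, if needed, because it is the homeomorphic image of the top cell of $\gr(k,n)_{\geq 0}$ by results in \cite{lam_face_structure}.
\item \emph{Face stratification.} By Proposition~\ref{cor:face_and_boundary_poset_coincide}, the face stratification $\cF(S)$ is well defined and isomorphic to the poset $P_{n,k}$ of \cite{lam_face_structure}. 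That poset is graded of rank $2k=\dim\gr(k,k+2)$ by the description in op.~cit., where faces are indexed by rank-two positroids with a dimension function.
\item \emph{Disjointness of $R$.} $R\cap S=\varnothing$ for generic $Z$ by Proposition~\ref{prop:residual_arrangement_disjoint}.
\end{itemize}

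Next come the three conditions on each face $F$ of dimension $d>0$ with Zariski closure $V$.
\begin{enumerate}
\item \emph{Normality and smoothness.} $V$ has rational singularities by Lemma~\ref{lem:faces_have_rational_singularities}, hence is normal by Remark~\ref{rem:rational_sing_implies_normal}. The same lemma makes the open stratum $\mathring V$ smooth. To obtain smoothness of $V\setminus W$, I would argue that any point of $V\setminus\mathring V$ lies in a stratum of $\cL(D)$ of codimension at least one in $V$. The codimension-one strata lie in a Zariski open subset of $V$ where $V$ is smooth, because their open parts are smooth, $V$ is normal, and $V$ is locally a transverse slice of a positroid variety, where codimension-one boundary is Cartier with smooth open part. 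Only codimension-two and deeper strata are potentially singular, and these lie in $W$. Should this local argument fail, a fallback is to enlarge $W$ to $V\setminus\mathring V$ intersected with the singular locus, which still has codimension $\geq2$ by normality. That suffices for the proof of Theorem~\ref{thm:fromBDtoPG}, which only uses that $W$ has codimension two.
\item \emph{Vanishing cohomology.} $H^d(V,\cO_V)=0$ is exactly Proposition~\ref{prop:faceshavegenuszero}.
\item \emph{Interior of faces.} The interior of $F$ lies in $V\setminus D_V$, since by \cite[Theorem~23]{lam_face_structure} the relative interior of a face is the image of an open positroid cell and lies in the open stratum $\mathring V$.
\end{enumerate}

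The main obstacle is showing that $(X\setminus R,D\setminus R)$ is a recursive genus zero pair. By Corollary~\ref{cor:all_genera_zero_implies_recursive}, applied to the variety $X\setminus R$ with divisor $D\setminus R$, it suffices to show $g(U)=0$ for every element $U$ of $\cL(D\setminus R)$. Each such element is a Zariski open subset $V\setminus R$ of a stratum $V\in\cL(D)$ that is not residual; otherwise it would be contained in $R$ and removed. So $V$ is a face stratum, together with $X$ itself. By Lemmas~\ref{lem:rationality} and \ref{lem:faces_have_rational_singularities}, $V$ is a unirational projective variety with rational singularities, and Lemma~\ref{lem:genuszero_open} gives $g(V\setminus R)=0$. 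The same holds for $\gr(k,k+2)\setminus R$, since the Grassmannian is rational and smooth.

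The subtle point is that intersections of face strata inside $X\setminus R$ produce only face strata minus $R$. Components that are residual lie entirely in $R$ and disappear, and a face stratum cannot lie inside $R$ because it meets $S$. One must also check that Proposition~\ref{prop:nonSNC_genus_bound} applies to non-compact $X$, which its statement allows. With all hypotheses verified, Theorem~\ref{thm:fromBDtoPG} yields that $(\gr(k,k+2),\mathcal{A}_{n,k,2}(Z))$ is a positive geometry, with canonical form $r_*\oOmega(\iota_*[\mathcal{A}_{n,k,2}(Z)])$.
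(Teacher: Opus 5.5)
\textbf{Overall route.} Your route matches the paper's. You apply Theorem~\ref{thm:fromBDtoPG} to the pair $(\gr(k,k+2)\setminus R,\partial_a\aA_{n,k,2}\setminus R)$. You obtain the recursive genus zero property from Lemma~\ref{lem:genuszero_open} and Corollary~\ref{cor:all_genera_zero_implies_recursive}, since only open parts of face strata survive the removal of $R$. You take gradedness, normality, Condition~3 and $R\cap S=\varnothing$ from \cite{lam_face_structure} and Section~\ref{sec:faces}.

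\textbf{Orientability.} This is the step that fails as written. The real Grassmannian $\gr(k,N)(\R)$ with $0<k<N$ is orientable if and only if $N$ is even. So $\gr(k,k+2)(\R)$ is non-orientable for every odd $k$ (for $k=1$ it is $\mathbb{P}^2(\R)$), and being an open subset of it gives nothing. Your fallback is also wrong. For $n>k+2$ the top cell of $\gr(k,n)_{\geq 0}$ has dimension $k(n-k)>2k$, and $\widetilde{Z}$ is not injective on it, so the interior of $\aA_{n,k,2}$ is not a homeomorphic image of that cell. What you need is that $\aA_{n,k,2}$ lies in the real points of an affine chart of $\gr(k,k+2)$. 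The paper gets this from the fact that the amplituhedron is a tame Grasstope \cite[Proposition~2.3]{grasstopes}, so its interior is an open subset of $\R^{2k}$ and hence orientable. Orientability of lower-dimensional faces is then handled inside Theorem~\ref{thm:fromBDtoPG} via Lemma~\ref{lem:partial-boundary-face}.

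\textbf{Condition~1.} A smaller issue: your fallback of enlarging $W$ to the singular locus of $V$ is not legitimate. If any closed subset of codimension two were allowed, Condition~1 would collapse to normality of $V$. The proof of Theorem~\ref{thm:fromBDtoPG} uses that $W$ is a union of codimension-two strata of $\mathcal{L}(D)$. Hence for every facet $F'$ of $F$ with Zariski closure $V'$ one has $W\cap V'\subseteq D_{V'}$. This is exactly what guarantees that $V$ is smooth near $\partial F\cap(V'\setminus D_{V'})$. That is the local hypothesis of Lemma~\ref{lem:partial-boundary-face}, which produces the half-ball picture and the identity $\partial_{V'}(\iota_*[F])=\iota_*[F']$. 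So you genuinely have to show that $V$ is smooth along the open parts of its codimension-one strata. Your primary argument points in this direction. The paper settles it by citing this property for positroid varieties \cite{Knuston2013positroid,BilleyCoskun2012} and transferring it to $V$ by Kleiman transversality, as in the proof of \cite[Theorem~25]{lam_face_structure}.
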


\begin{proof}
    Let $R$ be the residual arrangement of $\mathcal{A}_{n,k,2}(Z)$ and consider the pair $(\mathring{X},\mathring{D}):=(\mathrm{Gr}(k,k+2)\setminus R,\partial_a \mathcal{A}_{n,k,2}(Z)\setminus R)$.
    By Proposition \ref{prop:residual_arrangement_disjoint} we have $\mathcal{A}_{n,k,2}(Z)\subseteq \mathring{X}$ and $\partial\aA_{n,k,2}(Z)\subset \mathring{D}$, so $\mathcal{A}_{n,k,2}(Z)$ gives rise to a well-defined relative cycle in $H_n(\mathring{X},\mathring{D})$. 
    We will now show that the assumptions of Theorem \ref{thm:fromBDtoPG} are satisfied for this pair and $S=\mathcal{A}_{n,k,2}(Z)$. 
    
    Our first step is to show that $(\mathring{X}, \mathring{D})$ is a recursive genus zero pair. 
    Since we have removed the residual arrangement, every element in the algebraic boundary stratification of $\mathcal{A}_{n,k,2}(Z)$ in $\mathring{X}$ is a Zariski open subset of a face stratum $V$ of $\mathcal{A}_{n,k,2}(Z)$. By Lemma \ref{lem:rationality} $V$ is a unirational variety, and by Lemma \ref{lem:faces_have_rational_singularities} it has rational singularities.  
    Thus by Lemma \ref{lem:genuszero_open} each  element in the boundary stratification of $\mathcal{A}_{n,k,2}(Z)$  in $\mathring{X}$ has vanishing genus.
    Then, $(\mathring{X},\mathring{D})$ is a recursive genus zero pair by Corollary \ref{cor:all_genera_zero_implies_recursive}. 

    Our second step is to show that faces of $\mathcal{A}_{n,k}$ satisfy the three conditions outlined in Theorem \ref{thm:fromBDtoPG}. 
    First, note that the amplituhedron $\mathcal{A}_{n,k}$ is a tame Grasstope \cite[Definition 2.2]{grasstopes}.
    Thus, by \cite[Proposition 2.3]{grasstopes} it is contained in the real points of an affine chart of $\mathrm{Gr}(k,k+2)$.
    The interior $\mathrm{int}(\mathcal{A}_{n,k})$ is thus homeomorphic to an open subset of a Euclidean space and is therefore orientable. 
    Now consider a face $F$ with Zariski closure $V$ in a closed subvariety of $\gr(k,k+2)$.
    By the proof of \cite[Theorem 25]{lam_face_structure} $V$ is normal and moreover its singularities are contained in the Zariski closures of the strata of $\mathcal{L}(D)$ that have codimension two in $V$.
    Both follow from Kleiman transversality together with \cite{Knuston2013positroid} and \cite[Theorem 2.7]{BilleyCoskun2012}, where these properties were shown for positroid varieties.
    Thus, Condition 1 of Theorem \ref{thm:fromBDtoPG} is satisfied. 
    Now, by Proposition \ref{prop:faceshavegenuszero} we have $H^d(V,\mathcal{O}_V)=0$ if $d:=\mathrm{dim}V>0$, implying Condition 2. 
    Finally, Condition 3 requires that the interior of $F$ be contained in $V\setminus D_V$. This again follows from \cite[Theorem 25]{lam_face_structure}.

    Note that $\aA_{n,k,2}(Z)$ has a graded face stratification of rank $2k$ by \cite[Theorem 25 and Theorem 33]{lam_face_structure}.
    Therefore, the semi-algebraic set $\mathcal{A}_{n,k}$ together with the pair $(\mathrm{Gr}(k,k+2)\setminus R, \partial_a \mathcal{A}_{n,k}\setminus R)$ satisfies the conditions of Theorem \ref{thm:fromBDtoPG}.
    Thus, $(\mathrm{Gr}(k,k+2), \mathcal{A}_{n,k})$ is a positive geometry with canonical form $r_*\oOmega(\iota_*[\aA_{n,k}])$, where $r_*$ is the map induced by restriction in Lemma \ref{lem:open_forms}, $\oOmega$ is the canonical map from Lemma \ref{lem:open_canonical_map}, and $\iota_*$ is the pushforward $H_{2k}(\mathring{X}(\mathbb{R}),\mathring{D}(\mathbb{R}))\to H_{2k}(\mathring{X},\mathring{D})$ induced by inclusion of pairs. 
\end{proof}

\begin{remark}
    Our proof does not give an explicit canonical form of $\mathcal{A}_{n,k,2}(Z)$. A natural way to construct it would be to sum the canonical forms of the amplituhedron tiles from \cite{parisi:meq2}. However, proving that such a sum is indeed the canonical form of the amplituhedron is a separate subtle technical task. 
\end{remark}

\bigskip
\noindent\textbf{AI Use.} We used AI (Claude Opus 5.5, Fable 5.1 and GPT-6 Astra Pro) as a search engine, for proofreading purposes of drafts of this work, and to generate the initial code for Figures \ref{fig:nonconvex-polygon} and \ref{fig:nonstrong-geometry}.
Parts of the proof of Lemma \ref{lem:partial-boundary-face} were suggested by AI and subsequently rewritten and streamlined by the authors.
The text of the article is entirely written by the authors, and they take full responsibility for its content, including all mathematical results.

\medskip
\noindent\textbf{Acknowledgements.} The authors are grateful to Daniele Agostini, Cl\'ement Dupont, Thomas Lam, and Christian Lehn for helpful discussions. This research is funded by the European Union (ERC, UNIVERSE PLUS, 101118787). Views and opinions expressed are, however, those of the authors only and do not necessarily reflect those of the European Union or the European Research Council Executive Agency. 
Neither the European Union nor the granting authority can be held responsible for them. 
The authors would like to thank the Erwin Schrödinger International Institute for Mathematics and Physics (ESI), University of Vienna (Austria), for the opportunity to participate in the Thematic Program “Amplitudes and Algebraic Geometry'' where an important part of this work has been accomplished.
RS was supported by the SPP 2458 ``Combinatorial Synergies'', funded by the Deutsche Forschungsgemeinschaft (DFG, German Research Foundation), project number 539677510. 
JK is grateful for the support received from the Institute for Advanced Study and an anonymous donor.

\printbibliography

@article{brown-dupont,
 author = {Brown, Francis and Dupont, Cl{\'e}ment},
 title = {Positive geometries and canonical forms via mixed {Hodge} theory},
 fjournal = {Communications in Mathematical Physics},
 journal = {Commun. Math. Phys.},
 volume = {406},
 number = {11},
 pages = {65},
 note = {Id/No 267},
 year = {2025},
 doi = {10.1007/s00220-025-05399-y},
 zbMATH = {8108277}
}

@article{Sinn2015,
 author = {Sinn, Rainer},
 title = {Algebraic boundaries of convex semi-algebraic sets},
 fjournal = {Research in the Mathematical Sciences},
 journal = {Res. Math. Sci.},
 volume = {2},
 pages = {18},
 note = {Id/No 3},
 year = {2015},
 doi = {10.1186/s40687-015-0022-0},
 zbMATH = {6587775},
 Zbl = {1381.52007}
}

@article{Deligne1974,
 author = {Deligne, Pierre},
 title = {Th{\'e}orie de {Hodge}. {III}},
 fjournal = {Publications Math{\'e}matiques},
 journal = {Publ. Math., Inst. Hautes {\'E}tud. Sci.},
 volume = {44},
 pages = {5--77},
 year = {1974},
 language = {French},
 doi = {10.1007/BF02685881},
 url = {https://eudml.org/doc/103935},
 zbMATH = {3375638},
 Zbl = {0237.14003}
}

@article{lam_face_structure,
 author = {Lam, Thomas},
 title = {On the face stratification of the {{\(m = 2\)}} amplituhedron},
 fjournal = {Combinatorial Theory},
 journal = {Comb. Theory},
 volume = {5},
 number = {4},
 pages = {17},
 note = {Id/No 3},
 year = {2025},
 doi = {10.5070/C65465669},
 zbMATH = {8169889}
}

@article{parisi:meq2,
 author = {Parisi, Matteo and Sherman-Bennett, Melissa and Williams, Lauren K.},
 title = {The {{\(m=2\)}} amplituhedron and the hypersimplex: signs, clusters, tilings, {Eulerian} numbers},
 fjournal = {Communications of the American Mathematical Society},
 journal = {Commun. Am. Math. Soc.},
 volume = {3},
 pages = {329--399},
 year = {2023},
 doi = {10.1090/cams/23},
 zbMATH = {7751004},
 Zbl = {1531.05246}
}

@article{pos_geom_2017,
 author = {Arkani-Hamed, Nima and Bai, Yuntao and Lam, Thomas},
 title = {Positive geometries and canonical forms},
 fjournal = {Journal of High Energy Physics},
 journal = {J. High Energy Phys.},
 volume = {2017},
 number = {11},
 pages = {124},
 note = {Id/No 39},
 year = {2017},
 doi = {10.1007/JHEP11(2017)039},
 zbMATH = {6854036},
 Zbl = {1383.81273}
}

@article{Knuston2013positroid,
 author = {Knutson, Allen and Lam, Thomas and Speyer, David E.},
 title = {Positroid varieties: juggling and geometry},
 fjournal = {Compositio Mathematica},
 journal = {Compos. Math.},
 year = {2013},
 volume = {149},
 number = {10},
 pages = {1710--1752},
 doi = {10.1112/S0010437X13007240},
 zbMATH = {6231006},
 Zbl = {1330.14086}
}

@article{Amplituhedron,
 author = {Arkani-Hamed, Nima and Trnka, Jaroslav},
 title = {The amplituhedron},
 fjournal = {Journal of High Energy Physics},
 journal = {J. High Energy Phys.},
 volume = {2014},
 number = {10},
 pages = {33},
 note = {Id/No 030},
 year = {2014},
 doi = {10.1007/JHEP10(2014)030},
 zbMATH = {7389633},
 Zbl = {1468.81075}
}

@article{Deligne1971,
 author = {Deligne, Pierre},
 title = {Th{\'e}orie de {Hodge}. {II}. ({Hodge} theory. {II})},
 fjournal = {Publications Math{\'e}matiques},
 journal = {Publ. Math., Inst. Hautes {\'E}tud. Sci.},
 volume = {40},
 pages = {5--57},
 year = {1971},
 language = {French},
 doi = {10.1007/BF02684692},
 url = {https://eudml.org/doc/103914},
 zbMATH = {3348292},
 Zbl = {0219.14007}
}

@misc{bao2019tilings,
  author        = {Bao, Huanchen and He, Xuhua},
  title         = {The $m=2$ amplituhedron},
  year          = {2019},
  eprint        = {1909.06015},
  archivePrefix = {arXiv},
  primaryClass  = {math.CO}
}

@incollection{kovács2011intuitivedefinitiondubois,
  author        = {Kov{\'a}cs, S{\'a}ndor J.},
  title         = {The intuitive definition of {D}u {B}ois singularities},
  booktitle     = {Geometry and Arithmetic},
  editor        = {Faber, Carel and Farkas, Gavril and de Jong, Robin},
  series        = {EMS Series of Congress Reports},
  publisher     = {European Mathematical Society},
  address       = {Z{\"u}rich},
  year          = {2012},
  pages         = {257--266},
  eprint        = {1109.5569},
  archivePrefix = {arXiv},
  primaryClass  = {math.AG}
}

@phdthesis{thesis,
    author = {Joris Koefler},
    title = {Positive Geometries in Physics},
    school = {University of G\"ottingen},
    year = {2026}
}

@incollection{williams2023positive,
 author = {Williams, Lauren K.},
 title = {The positive {Grassmannian}, the amplituhedron, and cluster algebras},
 year = {2023},
 publisher = {Berlin: European Mathematical Society (EMS)},
 doi = {10.4171/ICM2022/126},
 zbMATH = {7821709},
 Zbl = {1555.14105}
}

@article{amplituhedron_adjoint,
   title={Adjoints and canonical forms of tree amplituhedra},
   DOI={10.7146/math.scand.a-149816},
   fjournal = {Mathematica Scandinavica},
   journal={Math. Scand.},
   publisher={Det Kgl. Bibliotek/Royal Danish Library},
   author={Ranestad, Kristian and Sinn, Rainer and Telen, Simon},
 volume = {130},
 number = {3},
 pages = {433--466},
   year={2024}
}

@article{lam2024invitation,
  title={An invitation to positive geometries},
  author={Lam, Thomas},
  journal={Open problems in algebraic combinatorics},
  year={2024},
  publisher={American Mathematical Society}
}

@misc{postnikov,
      title={Total positivity, {Grassmannians}, and networks}, 
      author={Alexander Postnikov},
      year={2006},
      eprint={math/0609764},
      archivePrefix={arXiv},
      primaryClass={math.CO},
}

@misc{sturmfels2026positivegeometriescubicsurfaces,
      title={Positive Geometries from Cubic Surfaces}, 
      author={Bernd Sturmfels and Simon Telen},
      year={2026},
      eprint={2605.11909},
      archivePrefix={arXiv},
      primaryClass={math.AG},
}

@misc{KPScube,
  author        = {Koefler, Joris and Pavlov, Dmitrii and Sinn, Rainer},
  title         = {Positive Genus Pairs from Amplituhedra},
  year          = {2026},
  eprint        = {2601.11142},
  archivePrefix = {arXiv},
  primaryClass  = {math.AG}
}

@article{rationaldubois,
 author = {Kov{\'a}cs, S{\'a}ndor J.},
 title = {Rational, log canonical, {Du} {Bois} singularities: {On} the conjectures of {Koll{\'a}r} and {Steenbrink}},
 fjournal = {Compositio Mathematica},
 journal = {Compos. Math.},
 volume = {118},
 number = {2},
 pages = {123--133},
 year = {1999},
 doi = {10.1023/A:1001120909269},
 zbMATH = {1347315},
 Zbl = {0962.14011}
}

@article{lukowski,
 author = {{\L}ukowski, Tomasz},
 title = {On the boundaries of the {{\(m = 2\)}} amplituhedron},
 fjournal = {Annales de l'Institut Henri Poincar{\'e} D. Combinatorics, Physics and their Interactions},
 journal = {Ann. Inst. Henri Poincar{\'e} D, Comb. Phys. Interact.},
 volume = {9},
 number = {3},
 pages = {525--541},
 year = {2022},
 doi = {10.4171/AIHPD/124},
 zbMATH = {7665074},
 Zbl = {1520.81129}
}

@article{SchwedeTakagi2008_rat_normal,
  author       = {Schwede, Karl and Takagi, Shunsuke},
  title        = {Rational singularities associated to pairs},
  fjournal = {Michigan Mathematical Journal},
  journal = {Mich. Math. J.},
  volume       = {57},
  date         = {2008},
  pages        = {625--658},
  doi          = {10.1307/mmj/1220879429}
}

@misc{logcanonical,
  author        = {Hollering, Benjamin and Pavlov, Dmitrii and Pratt, Elizabeth},
  title         = {Log Canonical Models and Positive Geometries},
  year          = {2026},
  eprint        = {2607.28368},
  archivePrefix = {arXiv},
  primaryClass  = {math.AG}
}

@inproceedings{hironaka1975triangulations,
  author    = {Hironaka, Heisuke},
  title     = {Triangulations of algebraic sets},
  editor    = {Hartshorne, Robin},
  booktitle = {Algebraic Geometry, Arcata 1974},
  series    = {Proceedings of Symposia in Pure Mathematics},
  volume    = {29},
  date      = {1975},
  pages     = {165--185},
  publisher = {American Mathematical Society},
  location  = {Providence, RI},
  doi       = {10.1090/pspum/029/0374131}
}

@article{karpwilliams,
 author = {Karp, Steven N. and Williams, Lauren K.},
 title = {The {{\(m=1\)}} amplituhedron and cyclic hyperplane arrangements},
 fjournal = {IMRN. International Mathematics Research Notices},
 journal = {Int. Math. Res. Not.},
 volume = {2019},
 number = {5},
 pages = {1401--1462},
 year = {2019},
 doi = {10.1093/imrn/rnx140},
 zbMATH = {7130878},
 Zbl = {1429.52024}
}

@article{bernd1988,
 author = {Sturmfels, Bernd},
 title = {Totally positive matrices and cyclic polytopes},
 fjournal = {Linear Algebra and its Applications},
 journal = {Linear Algebra Appl.},
 volume = {107},
 pages = {275--281},
 year = {1988},
 doi = {10.1016/0024-3795(88)90250-9},
 zbMATH = {4065182},
 Zbl = {0653.15014}
}

@article{BCFW2025,
  author        = {Even-Zohar, Chaim and Lakrec, Tsviqa and Tessler, Ran J.},
  title         = {The amplituhedron {BCFW} triangulation},
  fjournal = {Inventiones Mathematicae},
  journal       = {Invent. Math.},
  volume        = {239},
  number        = {3},
  year          = {2025},
  pages         = {1009--1138},
}

@misc{m4tilings2023,
  author        = {Even-Zohar, Chaim and Lakrec, Tsviqa and Parisi, Matteo and Sherman-Bennett, Melissa and Tessler, Ran and Williams, Lauren},
  title         = {Cluster algebras and tilings for the $m=4$ amplituhedron},
  year          = {2023},
  eprint        = {2310.17727},
  archivePrefix = {arXiv},
  primaryClass  = {math.CO}
}

@article{whatisposgeo,
 author = {Ranestad, Kristian and Sturmfels, Bernd and Telen, Simon},
 title = {What is positive geometry?},
 fjournal = {Le Matematiche},
 journal = {Matematiche},
 issn = {0373-3505},
 volume = {80},
 number = {1},
 pages = {3--16},
 year = {2025},
 doi = {10.4418/2025.80.1.1},
 zbMATH = {8097746},
 Zbl = {1573.83147}
}

@article{LukowskiParisiWilliams2023,
  author        = {{\L}ukowski, Tomasz and Parisi, Matteo and Williams, Lauren K.},
  title         = {The positive tropical {G}rassmannian, the hypersimplex, and the $m=2$ amplituhedron},
  fjournal = {International Mathematics Research Notices},
  journal       = {Int. Math. Res. Not.},
  volume        = {2023},
  number        = {19},
  year          = {2023},
  pages         = {16778--16836},
  doi           = {10.1093/imrn/rnad010}
}

@article{GalashinLam2020,
  author        = {Galashin, Pavel and Lam, Thomas},
  title         = {Parity duality for the amplituhedron},
  fjournal = {Compositio Mathematica},
  journal       = {Compos. Math.},
  volume        = {156},
  number        = {11},
  year          = {2020},
  pages         = {2207--2262},
  doi           = {10.1112/S0010437X20007411}
}

@misc{Galashin2024origamiI,
  author        = {Galashin, Pavel},
  title         = {Amplituhedra and origami, {I}: tree level},
  year          = {2024},
  eprint        = {2410.09574},
  archivePrefix = {arXiv},
  primaryClass  = {hep-th}
}

@misc{Galashin2026origamiII,
  author        = {Galashin, Pavel},
  title         = {Amplituhedra and origami, {II}: loop level},
  year          = {2026},
  eprint        = {2606.03439},
  archivePrefix = {arXiv},
  primaryClass = {hep-th}
}

@book{birationalGeometry,
  author    = {Koll{\'a}r, J{\'a}nos and Mori, Shigefumi},
  title     = {Birational Geometry of Algebraic Varieties},
  series    = {Cambridge Tracts in Mathematics},
  number    = {134},
  publisher = {Cambridge University Press},
  location  = {Cambridge},
  date      = {1998},
  doi       = {10.1017/CBO9780511662560},
  isbn      = {9780521632775}
}

@article{psw,
 author = {Postnikov, Alexander and Speyer, David and Williams, Lauren},
 title = {Matching polytopes, toric geometry, and the totally non-negative {Grassmannian}.},
 fjournal = {Journal of Algebraic Combinatorics},
 journal = {J. Algebr. Comb.},
 issn = {0925-9899},
 volume = {30},
 number = {2},
 pages = {173--191},
 year = {2009},
 doi = {10.1007/s10801-008-0160-1},
 zbMATH = {5600049},
 Zbl = {1264.20045}
}

@article{posparam,
 author = {Karpman, Rachel},
 title = {Bridge graphs and {Deodhar} parametrizations for positroid varieties},
 fjournal = {Journal of Combinatorial Theory. Series A},
 journal = {J. Comb. Theory, Ser. A},
 issn = {0097-3165},
 volume = {142},
 pages = {113--146},
 year = {2016},
 language = {English},
 doi = {10.1016/j.jcta.2016.03.005},
 zbMATH = {6582001},
 Zbl = {1337.05114}
}

@article{HartshorneReflexive,
  author       = {Hartshorne, Robin},
  title        = {Stable reflexive sheaves},
  fjournal = {Mathematische Annalen},
  journal  = {Math. Ann.},
  date         = {1980},
  volume       = {254},
  number       = {2},
  pages        = {121--176},
  doi          = {10.1007/BF01467074},
}

@article{Hir64,
    author       = {Hironaka, Heisuke},
    title        = {Resolution of Singularities of an Algebraic Variety
                    Over a Field of Characteristic Zero. {I}, {II}},
    fjournal = {Annals of Mathematics},
    journal= {Ann. Math.},
    series       = {2},
    date         = {1964},
    volume       = {79},
    pages        = {109--203, 205--326}
}

@article{Wlo05,
    author       = {W{\l}odarczyk, Jaros{\l}aw},
    title        = {Simple {Hironaka} Resolution in Characteristic Zero},
    fjournal = {Journal of the American Mathematical Society},
    journal = {J. Am. Math. Soc.},
    date         = {2005},
    volume       = {18},
    number       = {4},
    pages        = {779--822},
    doi          = {10.1090/S0894-0347-05-00493-5},
    eprint       = {math/0401401},
    eprinttype   = {arxiv}
}

@book{KollarSmithCorti2004,
  author    = {Kollár, János and Smith, Karen E. and Corti, Alessio},
  title     = {Rational and Nearly Rational Varieties},
  series    = {Cambridge Studies in Advanced Mathematics},
  number    = {92},
  publisher = {Cambridge University Press},
  location  = {Cambridge},
  date      = {2004},
  doi       = {10.1017/CBO9780511734991},
}

@incollection{Kovacs2013Stable,
    author    = {Kov{\'a}cs, S{\'a}ndor J.},
    title     = {Singularities of stable varieties},
    booktitle = {Handbook of Moduli. Volume {II}},
    editor    = {Farkas, Gavril and Morrison, Ian},
    series    = {Advanced Lectures in Mathematics},
    number    = {25},
    pages     = {159--203},
    publisher = {International Press},
    location  = {Somerville, MA},
    date      = {2013},
    eprint    = {1102.1240},
    eprinttype = {arxiv},
    eprintclass = {math.AG}
}

@article{EGAI,
    author       = {Grothendieck, Alexander},
    title        = {Éléments de géométrie algébrique: {I}. {Le} langage des schémas},
    fjournal = {Publications Mathématiques de l'IHÉS},
    journal = {Publ. Math., Inst. Hautes {\'E}tud. Sci.},
    date         = {1960},
    volume       = {4},
    pages        = {5--228},
    doi          = {10.1007/BF02684778},
    langid       = {french}
}

@article{BilleyCoskun2012,
    author       = {Billey, Sara and Coskun, Izzet},
    title        = {Singularities of generalized {Richardson} varieties},
    fjournal = {Communications in Algebra},
    journal = {Commun. Algebra},
    date         = {2012},
    volume       = {40},
    number       = {4},
    pages        = {1466--1495},
    doi          = {10.1080/00927872.2011.551903},
    eprint       = {1008.2785},
    eprinttype   = {arxiv},
    eprintclass  = {math.AG}
}

@misc{anaelle,
 author = {Ana{\"e}lle Pfister},
 title = {Affine hyperplane arrangements at finite distance},
 year = {2026},
 eprint = {2602.00847},
 archivePrefix={arXiv},
 primaryClass = {math.AG}
}

@article{grasstopes,
 author = {Mandelshtam, Yelena and Pavlov, Dmitrii and Pratt, Elizabeth},
 title = {Combinatorics of {{\(m = 1\)}} {Grasstopes}},
 fjournal = {Combinatorial Theory},
 journal = {Comb. Theory},
 volume = {5},
 number = {2},
 pages = {21},
 year = {2025},
 doi = {10.5070/C65265411},
 zbMATH = {8121606},
 Zbl = {1573.05393}
}

@article{BCMP13,
  author       = {Buch, Anders S. and Chaput, Pierre-Emmanuel
                  and Mihalcea, Leonardo C. and Perrin, Nicolas},
  title        = {Finiteness of cominuscule quantum {$K$}-theory},
  journaltitle = {Annales scientifiques de l'École normale supérieure},
  series       = {4},
  volume       = {46},
  number       = {3},
  date         = {2013},
  pages        = {477--494},
  doi          = {10.24033/asens.2194},
  eprint       = {1011.6658},
  eprinttype   = {arxiv},
  eprintclass  = {math.AG},
}

\noindent{\bf Authors' addresses:}
\medskip

\noindent Joris Koefler, Institute for Advanced Study, Princeton

\noindent E-Mail: {\tt koefler@ias.edu}
\medskip

\noindent Dmitrii Pavlov, Max Planck Institute for Physics, Garching bei M\"unchen

\noindent
E-Mail: {\tt pavlov@mpp.mpg.de}

\medskip
\noindent Rainer Sinn, Georg-August-Universit\"at G\"ottingen

\noindent
E-Mail: {\tt rainer.sinn@mathematik.uni-goettingen.de}

\end{document}